\documentclass[11pt]{article}
\usepackage[margin = 1in]{geometry}

\usepackage{lipsum}
\usepackage{amsfonts}
\usepackage{graphicx}
\graphicspath{ {images/} }
\usepackage{epstopdf}
\usepackage{dsfont}
\usepackage{mathtools}
\usepackage{empheq}
\usepackage{amsfonts}
\usepackage{amsgen,amsthm,amsmath,amstext,amsbsy,amsopn,amssymb,subfigure,stmaryrd,mathabx,mathrsfs}
\usepackage{comment}
\usepackage[ruled, vlined, lined, commentsnumbered]{algorithm2e}
\usepackage[font=footnotesize,labelfont=bf]{caption}

\usepackage{blkarray}

\usepackage{url}
\usepackage{longtable}
\usepackage{mathtools}
\usepackage{multirow}
\usepackage{array}

\usepackage{enumitem}
\usepackage{hyperref}
\usepackage{natbib}
\usepackage{booktabs}
\usepackage{algpseudocode}
\usepackage{xcolor}
\usepackage[utf8]{inputenc} 
\usepackage[T1]{fontenc}

\ifpdf
  \DeclareGraphicsExtensions{.eps,.pdf,.png,.jpg}
\else
  \DeclareGraphicsExtensions{.eps}
\fi

\newcommand{\cH}{\mathcal{H}}
\newcommand{\cS}{\mathcal{S}}

\newcommand{\cP}{\mathcal{P}}
\newcommand{\cX}{\mathcal{X}}

\newcommand{\cF}{\mathcal{F}}

\newcommand{\cE}{\mathcal{E}}

\newcommand{\cI}{\mathcal{I}}
\newcommand{\cW}{\mathcal{W}}

\newcommand{\htheta}{\widehat{\theta}}

\newcommand{\bbN}{\mathbb{N}}
\newcommand{\bbP}{\mathbb{P}}
\newcommand{\bbE}{\mathbb{E}}
\newcommand{\bbR}{\mathbb{R}}

\renewcommand{\exp}{{\rm{exp}}}
\newcommand{\TV}{{\sf TV}}
\newcommand{\KL}{{\rm{KL}}}

\newcommand{\indi}{{\mathds{1}}}

\newcommand{\CI}{{\textnormal{CI}}}
\newcommand{\CS}{{\textnormal{CS}}}
\newcommand{\wh}{\widehat}
\newcommand{\wt}{\widetilde}

\newtheorem{Definition}{Definition}
\newtheorem{Theorem}{Theorem}

\newtheorem{Lemma}{Lemma}

\newtheorem{Proposition}{Proposition}

\DeclareMathAlphabet\mathbfcal{OMS}{cmsy}{b}{n}

\hypersetup{
    colorlinks=true,
    linkcolor=blue,
    filecolor=blue,      
    urlcolor=cyan,
    citecolor=blue
}
\newcommand\blfootnotea[1]{%
  \begingroup
  \renewcommand\thefootnote{}\footnote{#1}%
  \endgroup
}
\makeatletter
\newcommand*{\rom}[1]{\expandafter\@slowromancap\romannumeral #1@}
\makeatother

\allowdisplaybreaks

\begin{document}
	\title{Adaptive Robust Confidence Sets in High Dimensions\blfootnotea{Authors are listed in alphabetical order.}}
	    \author{
Ilias Diakonikolas\thanks{Supported by NSF Medium Award CCF-2107079 and a Guggenheim Fellowship.}\\
University of Wisconsin-Madison \\
{\tt ilias@cs.wisc.edu} \\
\and 
Chao Gao\thanks{Supported by NSF Grants ECCS-2216912 and DMS-2310769 and an Alfred Sloan fellowship.}\\
University of Chicago \\
{\tt chaogao@uchicago.edu}
\and 
Daniel M.\ Kane\thanks{Supported by NSF Medium Award CCF-2107547.}\\
University of California, San Diego \\
{\tt dakane@ucsd.edu} 
\and 
Yuetian Luo \\
Rutgers University\\
{\tt yl2785@stat.rutgers.edu}
}

	\date{}
	\maketitle

\begin{abstract}
We study adaptive confidence sets for mean vectors of multivariate Gaussians with identity covariance under Huber contamination when the contamination proportion is unknown. 
We show that an information-theoretically optimal adaptive confidence set can be constructed by 
simultaneously quantifying uncertainty across all one-dimensional projections. The 
resulting confidence set achieves the optimal diameter and reveals a substantial 
adaptation cost compared with the minimax estimation error. 

Although statistically optimal, this construction is computationally
infeasible. We establish a Statistical Query (SQ) lower bound showing
that achieving diameter $O(m^{-1/2})$ requires either exponentially many
queries or query tolerance $d^{-\Omega(m)}$. Thus, in contrast to robust
mean estimation, adaptation to the unknown contamination proportion
incurs both a statistical cost and a computational barrier within the
SQ framework.

Finally, we give a low-degree
moment-matching algorithm that constructs an adaptive confidence
set using $d^{O(m)}$ samples and time. Its diameter bound contains
an additional $O(m^{-1/2})$ term beyond the information-theoretic
rate, matching the dependence on $m$ in the SQ lower bound.
Together, these results characterize the statistical limits of
adaptive robust confidence sets under Huber contamination and
quantify a computational barrier within the SQ framework.

\end{abstract}

\thispagestyle{empty}
\setcounter{page}{0}

\newpage

\begin{sloppypar}
\section{Introduction} \label{sec: introduction}

Given $X_1, \ldots, X_n \overset{iid}\sim N(\theta,I_d)$, the classical confidence set
$$\left\{\theta\in\mathbb{R}^d: n\|\bar{X}-\theta\|^2\leq \chi_{d,1-\alpha}^2\right\}$$
is known to achieve the exact coverage level $1-\alpha$, where $\chi_{d,1-\alpha}^2$ denotes the $(1-\alpha)$ quantile of chi-squared distribution with degrees of freedom $d$. Moreover, its diameter is of order $\sqrt{d/n}$, which is minimax optimal. In this paper, we consider the same problem of constructing confidence sets in a more general setting where data may contain arbitrary outliers. Specifically, we consider the Huber contamination model \citep{huber1964robust}, where observations are generated according to 
\begin{equation} \label{def:model}
	X_1, \ldots, X_n \overset{iid}\sim P_{\epsilon,\theta,Q} = (1- \epsilon ) N(\theta, I_d) + \epsilon Q.
\end{equation}
Under this model, roughly an $\epsilon$ fraction of the observations are generated from an arbitrary unknown contamination distribution $Q$ that may or may not be related to the model distribution $N(\theta, I_d)$. The parameter $\epsilon$ is referred to as the contamination proportion or the contamination level. Compared with the classical (clean data) setting, a confidence set under (\ref{def:model}) must account for uncertainty arising from both finite-sample fluctuations and the presence of outliers.

The difficulty of constructing a robust confidence set depends critically on whether the contamination proportion $\epsilon$ is known. When $\epsilon$ is known, a robust confidence set can be obtained by directly inverting a high-probability estimation error bound. For instance, the Tukey median \citep{tukey1975mathematics} is known to satisfy
\begin{equation} \label{ineq:opt-estimation}
	\begin{split}
		\inf_{\theta,Q}
		P_{\epsilon,\theta,Q}
		\left(
		\|\widehat{\theta}-\theta\|
		\le
		C\left(
		\sqrt{\frac{d}{n}}
		+\epsilon
		+\sqrt{\frac{\log(1/\alpha)}{n}}
		\right)
		\right)
		\ge
		1-\alpha,
	\end{split}
\end{equation}
for some universal constant $C>0$, as long as $\epsilon\le 1/5$ \citep{chen2018robust}. Therefore, the $\ell_2$ ball with radius $O(\sqrt{d/n}+\epsilon)$ centered at the Tukey median achieves the desired coverage property according to (\ref{ineq:opt-estimation}). Its diameter is also of order $O(\sqrt{d/n}+\epsilon)$, which is minimax optimal under (\ref{def:model}). In addition to the exact Tukey median, which is computationally infeasible in high dimensions \citep{johnson1978densest}, a number of polynomial-time computable estimators \citep{diakonikolas2016robust,lai2016agnostic,diakonikolas2018robustly,diakonikolas2024near} can also be used as the center, and therefore yield polynomial-time procedures for constructing robust confidence sets. Thus, when $\epsilon$ is known, both the statistical and computational aspects of robust confidence set construction are well understood.

On the other hand, the problem becomes substantially more challenging 
when $\epsilon$ is unknown. This is because the statistical error bound of a robust point estimator depends on the unknown contamination level $\epsilon$, and therefore cannot be directly translated into a valid confidence set. As a result, one is led to an adaptive confidence set construction problem.

The construction of adaptive confidence sets is known to be a challenging topic in nonparametric and high-dimensional inference. The difficulty typically arises because uncertainty quantification depends on an unknown regularity parameter, such as function smoothness, sparsity, or structural complexity. When statistical models corresponding to different values of the regularity parameter are nested, honest adaptation is usually impossible \citep{low1997nonparametric,juditsky2003nonparametric,cai2004adaptation,baraud2004confidence,cai2006adaptive,robins2006adaptive,hoffmann2011adaptive,bull2013adaptive,carpentier2013honest}. Similar impossibility phenomena have also been established for high-dimensional sparse and low-rank regression models \citep{nickl2013confidence,cai2017confidence,carpentier2015signal,carpentier2018adaptive}. Nevertheless, adaptive confidence sets can sometimes be constructed by relaxing the coverage requirement \citep{wahba1983bayesian,genovese2008adaptive,cai2014adaptive} or by restricting attention to subclasses of signals satisfying additional structural assumptions, such as self-similarity, polished-tail conditions, or shape constraints \citep{gine2010confidence,nickl2016sharp,chernozhukov2014anti,szabo2015frequentist,dumbgen2003optimal,cai2013adaptive,van2017adaptive}. We refer the reader to Chapter 8 of \cite{gine2015mathematical} for a comprehensive overview of this topic.

In contrast to much of the nonparametric literature, where the construction of adaptive confidence sets is impossible, \cite{luo2024adaptive} showed that adaptive confidence intervals can be constructed under (\ref{def:model}) when $d=1$. However, a substantial cost of adaptation is unavoidable. The optimal length of an adaptive confidence interval when $\epsilon$ is unknown scales as
\begin{equation}
\frac{1}{\sqrt{\log n}} + \frac{1}{\sqrt{\log(1/\epsilon)}},
\label{eq:rate-luogao}
\end{equation}
which is exponentially slower than the optimal estimation error rate $n^{-1/2}+\epsilon$. This adaptation cost stems from a fundamental information-theoretic phenomenon: a zero-mean Gaussian distribution can be statistically indistinguishable from a nonzero-mean Gaussian distribution under contamination. In particular, the rate (\ref{eq:rate-luogao}) remains $1/\sqrt{\log n}$ when $\epsilon=0$. Thus, even when the data are completely clean, a statistician must collect exponentially more samples in order to achieve the same level of uncertainty quantification associated with the parametric rate $n^{-1/2}$. This phenomenon arises because the confidence interval must remain valid under contamination, even though the observed data may in fact contain no outliers.

Extending adaptive confidence intervals to multivariate confidence sets is far from straightforward. The one-dimensional construction in \cite{luo2024adaptive} relies heavily on order statistics and quantile-based arguments, which do not have direct analogues in higher dimensions. Moreover, the geometry of multivariate confidence sets introduces new challenges in both the characterization of the optimal diameter and the design of computationally efficient procedures.

In this paper, we characterize the optimal diameter of an adaptive robust confidence set under (\ref{def:model}) for general dimension $d$. We show that the optimal diameter scales as
\begin{equation}
\frac{1}{\sqrt{\log (n/d)}} + \frac{1}{\sqrt{\log(1/\epsilon)}},
\label{eq:rate-opt-diam}
\end{equation}
which exhibits a similar adaptation cost to the one-dimensional result when compared with the optimal estimation rate $\sqrt{d/n}+\epsilon$. An adaptive confidence set achieving the diameter (\ref{eq:rate-opt-diam}) can be constructed by intersecting all halfspaces whose supporting hyperplanes are determined by one-dimensional adaptive confidence bounds along projection directions. However, this information-theoretically optimal construction is computationally infeasible. Interestingly, we provide formal evidence that the computational hardness is 
an intrinsic feature of the problem. 
Specifically, we establish a Statistical Query (SQ) lower bound showing 
that any SQ confidence set with diameter $m^{-1/2}$ must either use exponentially many oracle queries or require query tolerance at most $d^{-\Omega(m)}$. Since achieving tolerance $d^{-\Omega(m)}$ requires $d^{\Omega(m)}$ samples to simulate, 
this result rules out a broad class of polynomial-time algorithms 
for constructing adaptive confidence sets with shrinking diameter ($m\to\infty$). Equivalently, to achieve diameter $m^{-1/2}$, one can at best hope 
for an algorithm using $d^{O(m)}$ samples and $d^{O(m)}$ computational complexity.

Such an algorithm can indeed be constructed through the idea of low-degree moment matching. Our algorithm starts with a robust pilot estimator and then quantifies its uncertainty by searching for a reweighting of the observations whose moments up to degree $m$ agree with those of a Gaussian distribution centered at the pilot estimator. This optimization problem can be solved by linear programming in time $d^{O(m)}$ using $d^{O(m)}$ samples. It leads to an adaptive robust confidence set with diameter
\begin{equation}
\frac{1}{\sqrt{\log (n/d)}} + \frac{1}{\sqrt{\log(1/\epsilon)}}+\frac{1}{\sqrt{m}},
\label{eq:rate-quasi-poly-diam}
\end{equation}
which matches the SQ lower bound up to constant factors. Consequently, the proposed procedure essentially characterizes the optimal statistical--computational tradeoff for adaptive confidence set construction under Huber contamination. 
For a prescribed threshold $\eta\in(0,0.05]$, choosing
$m=\Theta(\log(1/\eta))$ gives sample and time complexity
$d^{O(\log(1/\eta))}$. For contamination levels
$\epsilon\geq\eta$, the additional $m^{-1/2}$ term is absorbed
by the information-theoretic diameter bound, up to constant
factors. 

A recurring theme throughout the paper is moment matching. For the lower bound results, both the information-theoretic lower bound and the SQ lower bound are established through connections to non-Gaussian component analysis (NGCA) \citep{blanchard2006search,diakonikolas2017statistical,diakonikolas2023sq}. While the NGCA construction underlying the information-theoretic lower bound only matches the first moment of a Gaussian distribution, the SQ lower bound relies on matching the first $m$ moments. For the upper bound results, our efficient algorithm likewise searches for a reweighting of the observations that matches the first $m$ moments of a Gaussian distribution. Interestingly, the theoretical analysis of the algorithm ultimately relies on exactly the same non-Gaussian construction that appears in the proof of the SQ lower bound.

Before ending this section, we invite the reader to compare the problems of adaptive estimation and adaptive confidence set construction. Recall that the optimal estimation error rate is $\sqrt{d/n}+\epsilon$, and that this rate can be achieved by a polynomial-time algorithm without knowledge of $\epsilon$. In other words, there is essentially no adaptation cost or computational cost for the estimation problem. In contrast, adaptive confidence set construction is substantially more difficult, both statistically and computationally. Without knowledge of $\epsilon$, the optimal diameter deteriorates from the polynomial rate to the logarithmic rate (\ref{eq:rate-opt-diam}). Moreover, the lack of knowledge of $\epsilon$ introduces a computational barrier, leading to the rate (\ref{eq:rate-quasi-poly-diam}) under computational complexity $d^{O(m)}$.

\subsection{Related Work}
\label{sec:literature review}

\paragraph{High-Dimensional Robust Estimation.}
Robust statistics originated in the pioneering work of Huber, Tukey, and Hampel \citep{huber1964robust,huber1967behavior,huber1968robust,tukey1975mathematics,hampel1974influence}. While the classical literature focused primarily on low-dimensional and asymptotic 
settings, the past decade has witnessed substantial progress in developing robust procedures 
with non-asymptotic guarantees in high dimensions. For robust mean estimation under Huber 
contamination, statistically optimal estimators were established in 
\cite{chen2018robust,lugosi2021robust}, while computationally efficient algorithms were 
developed in a sequence of works including~\cite{diakonikolas2016robust,lai2016agnostic,DKK+17,diakonikolas2018robustly,cheng2019high,diakonikolas2020outlier, dalalyan2022all,depersin2022robust,diakonikolas2024near}. Beyond mean estimation, robust methods have been extensively studied for covariance estimation, regression, sparse estimation, clustering, and many other learning tasks \citep{cheng2019faster,minsker2018sub,gao2020robust,loh2017statistical,diakonikolas2019efficient,DKK+19,sun2020adaptive,prasad2020robust,zhu2022robust,balakrishnan2017computationally,hopkins2018mixture,diakonikolas2020robustly, BakshiDJKKV26}. We refer the reader to the monographs \citep{huber2011robust,hampel2011robust,diakonikolas2023algorithmic} for comprehensive treatments of the subject.

\paragraph{Robust Confidence Sets.}
Compared with robust estimation, uncertainty quantification under contamination has received considerably less attention. Confidence sets under Huber contamination without adaptation to the contamination level were studied by \cite{bateni2020confidence,wang2023huber}. When the contamination proportion is unknown, adaptive robust confidence intervals for the location model were developed by \cite{luo2024adaptive}, who established both the optimal adaptation cost and an explicit adaptive procedure in one dimension. Related adaptive uncertainty quantification problems have also been investigated for discrete models \citep{cho2025robust}, regression with oblivious contamination \citep{xie2025confidence}, Efron's two-groups model \citep{wang2026adaptive} and missing data \citep{ma2026adaptive}. To the best of our knowledge, the present paper provides the first systematic study of adaptive robust confidence sets in high dimensions.

\paragraph{Statistical--Computational Tradeoffs.}
Statistical--computational tradeoffs have emerged as a central theme in modern high-dimensional inference since the influential works of \cite{chandrasekaran2013computational,berthet2013complexity}. A large body of literature has identified gaps between information-theoretic limits and the performance of polynomial-time algorithms in problems such as sparse estimation, community detection, tensor decomposition, and planted structure detection \citep{ma2015computational,chen2016statistical,wang2016statistical,cai2017computational,hajek2015computational,zhang2018tensor,brennan2018reducibility,luo2020tensor,pananjady2022isotonic,hopkins2017power,barak2019nearly,gamarnik2014limits,gamarnik2022sparse,schramm2020computational}. In robust statistics, statistical--computational tradeoffs have been established for mean estimation under total variation contamination, sparse mean estimation, covariance estimation, regression, and list-decodable learning; see, e.g.,~\citet{diakonikolas2017statistical,brennan2020reducibility,diakonikolas2019efficient,diakonikolas2025information,diakonikolas2026robust,diakonikolas2018list} and Chapter~8 of~\cite{diakonikolas2023algorithmic}. The present paper moves beyond estimation and testing, and provides evidence for statistical--computational tradeoffs in adaptive uncertainty quantification. In particular, we show that the computational hardness of adaptive robust confidence sets is fundamentally linked to the adaptation cost incurred by the unknown contamination level.

\subsection{Paper Organization} \label{sec:organization}

The remainder of the paper is organized as follows. In Section \ref{sec:adaptiveRCI}, we formulate the problem of adaptive robust confidence set construction and establish the information-theoretic limits, including an optimal construction that achieves the minimax diameter. Section \ref{sec:GSQlow} develops a Statistical Query (SQ) lower bound, which characterizes the computational barrier of adaptive robust confidence sets. In Section \ref{sec:quasi-poly-algorithm}, we present our algorithm together with its theoretical guarantees. All technical proofs are deferred to 
the Appendix.

\subsection{Notation}\label{sec:notations}

We use $\mathbb{N}$ and $\mathbb{R}$ to denote the sets of nonnegative integers and real numbers, respectively. Define $[n]=\{1,\ldots,n\}$ for any positive integer $n$. The notation $\indi\{\cdot\}$ denotes the indicator function.
For any two sequences $\{a_n\}$ and $\{b_n\}$, we write $a_n \asymp b_n$ if there exist constants $c,C>0$ such that
$ca_n \le b_n \le Ca_n$
for all $n$. We write $a_n \lesssim b_n$ if
$a_n \le C b_n$
for some constant $C>0$ independent of $n$.
We use $B(\theta,r)$ to denote the Euclidean ball centered at $\theta$ with radius $r$. For any vector $v$, we use $\|v\|$ to denote its Euclidean norm. Let $\mathcal{S}^{d-1}$ denote the unit sphere in $\mathbb{R}^d$.
Given a cumulative distribution function (CDF) $F(\cdot)$, the corresponding quantile function is defined by
$F^{-1}(q)
=
\inf\{t\in\mathbb{R}:F(t)\ge q\}$.
In particular, given observations $X_1,\ldots,X_n\in\mathbb{R}$, let
$X_{(1)}\le\cdots\le X_{(n)}$
denote the order statistics. We use
$F_n(t)
=
\frac{1}{n}\sum_{i\in[n]}\indi\{X_i\le t\}$
and
$F_n^{-1}(q)
=
X_{(\lceil nq\rceil)}$
to denote the empirical CDF and empirical quantile function, respectively.
For any probability measure $P$, we use $P^{\otimes n}$ to denote the product measure of $n$ independent copies of $P$.
The chi-squared divergence between two probability distributions $P$ and $Q$ is defined by $\chi^2(P,Q)=\int\frac{(dP-dQ)^2}{dQ}$.
For probability distributions $P$ and $Q$ and a constant $a\in[0,1]$, the notation
$P\ge aQ$
means that $Q$ is absolutely continuous with respect to $P$ and that the Radon--Nikodym derivative satisfies
$\frac{dQ}{dP}\le \frac{1}{a}$.
We use $\mathbb{E}$ and $\mathbb{P}$ for generic expectation and probability operators whenever the underlying distribution is clear from the context.

\section{Information-Theoretic Limit} \label{sec:adaptiveRCI}

\subsection{Problem Setup} \label{sec:formulation}

Given i.i.d. observations generated according to \eqref{def:model}, our goal is to construct a confidence set $\wh{\CS}=\wh{\CS}(\{X_i\}_{i=1}^n)$ that characterizes the uncertainty of the mean vector $\theta$. We require that the set contains $\theta$ with high probability, and at the same time its size can be minimized. Given some $\alpha\in(0,1)$, the coverage probability is defined according to
\begin{equation}
\inf_{\epsilon \in [0, \epsilon_{\max}]} \inf_{\theta, Q} P_{\epsilon, \theta, Q} \left( \theta \in \widehat{\CS}\right) =  \inf_{\theta, Q} P_{\epsilon_{\max}, \theta, Q} \left( \theta \in \widehat{\CS}\right) \geq 1- \alpha, \label{eq:uni-robust-coverage}
\end{equation}
where $\epsilon_{\max}$ serves as a known upper bound for $\epsilon$, which is assumed to be a constant throughout the paper, specifically 
$\epsilon_{\max}= 0.05$. The infimum over $\epsilon \in [0, \epsilon_{\max}]$ is present because the confidence set is required to adapt to the unknown contamination proportion. The equality in (\ref{eq:uni-robust-coverage}) follows from the nesting relation $\{P_{\epsilon, \theta, Q}: Q\} \subseteq  \{P_{\epsilon_{\max}, \theta, Q}: Q\}$ for any $\epsilon \in [0, \epsilon_{\max}]$. Then, the collection of all adaptive confidence sets satisfying the coverage property is defined by
\begin{equation} \label{eq:adaptive-CS}
	\cI_{\alpha}(\epsilon_{\max}) = \left\{ \widehat{\CS}: \inf_{\theta, Q} P_{\epsilon_{\max}, \theta, Q} \left( \theta \in \widehat{\CS}\right) \geq 1- \alpha  \right\}.
\end{equation}
For any $\widehat{\CS}\in \cI_{\alpha}(\epsilon_{\max})$, we measure its size by the diameter $|\widehat{\CS}| = \sup_{\theta_1, \theta_2 \in \widehat{\CS}} \|\theta_1 - \theta_2\|$. The optimal diameter for some $\epsilon\in [0, \epsilon_{\max}]$ among all $(1-\alpha)$-level adaptive robust confidence sets is given by
\begin{equation} 
	r_{\alpha}( \epsilon, \epsilon_{\max}) = \inf \left\{r \geq 0: \inf_{\widehat{\CS}\in  \cI_{\alpha}(\epsilon_{\max}) }\sup_{\theta, Q} P_{\epsilon, \theta, Q} \left( |\widehat{\CS}| > r \right) \leq \alpha \right\}. \label{eq:minimax-length-ARCI}
\end{equation}
The first goal of our paper is to characterize this information-theoretically optimal diameter $r_{\alpha}( \epsilon, \epsilon_{\max})$.

From the definition (\ref{eq:minimax-length-ARCI}), it is straightforward that
$$r_{\alpha}( \epsilon, \epsilon)\leq r_{\alpha}( \epsilon, \epsilon_{\max})\leq r_{\alpha}( \epsilon_{\max}, \epsilon_{\max}).$$
The quantity $r_{\alpha}( \epsilon, \epsilon)$ is the optimal size of a confidence set that knows the value of $\epsilon$. It is of order $\sqrt{\frac{d}{n}}+\epsilon$, and can be achieved by an $\ell_2$ ball centered at the Tukey median with radius $C\left(\sqrt{\frac{d}{n}}+\epsilon\right)$ for some constant $C$ depending on $\alpha$. Similarly, the order of $r_{\alpha}( \epsilon_{\max}, \epsilon_{\max})$ is $\sqrt{\frac{d}{n}}+\epsilon_{\max}\asymp 1$ since $\epsilon_{\max}$ is assumed to be a constant. It can be achieved by a conservative strategy of using the upper bound $\epsilon_{\max}$ when the actual contamination proportion $\epsilon$ is unknown. We will show in this section that $r_{\alpha}( \epsilon, \epsilon_{\max})$ is of order $ \frac{1}{\sqrt{\log (n/d )} }+\frac{1}{\sqrt{\log (1/\epsilon)}}$.

\subsection{Upper Bound}\label{sec:optimal-ARCS-Gaussian}

We will first review the construction of an optimal adaptive confidence interval when $d = 1$. Recall that $F_n(\cdot)$ and $F_n^{-1}(\cdot)$ are the empirical CDF and empirical quantile function of the observations. 
The interval constructed by \cite{luo2024adaptive} is given by
\begin{equation}
\widehat{\CI} = \bigcap_{1.6 \leq t \leq  \Phi^{-1}\left(1 - \sqrt{ \log (2/\alpha)/(2n) } \right) }\left[\wh{\theta}_L(t)-\frac{2}{t},\wh{\theta}_R(t)+\frac{2}{t}\right], \label{eq:intersect-quan}
\end{equation}
where
\begin{eqnarray*}
\wh{\theta}_L(t) &=& F_n^{-1}( 2(1 - \Phi(t) ) )  + t, \\
\wh{\theta}_R(t) &=& F_n^{-1}( 1- 2(1 - \Phi(t) ) )  - t.
\end{eqnarray*}
We use $\Phi(\cdot)$ for the standard Gaussian CDF. It is clear that the formula (\ref{eq:intersect-quan}) does not depend on $\epsilon$, and is thus adaptive. It was shown in \cite{luo2024adaptive} that both $\wh{\theta}_L(t)$ and $\wh{\theta}_R(t)$ are robust estimators of $\theta$ with \textit{known} one-sided uncertainty quantification. In particular, with probability at least $1-\alpha$, both
\begin{equation}
\wh{\theta}_L(t)-\theta \leq \frac{2}{t}\quad\text{and}\quad \wh{\theta}_R(t)-\theta\geq -\frac{2}{t}, \label{eq:one-sided-uq-1d}
\end{equation}
hold simultaneously for all $t \in \left[1.6,\Phi^{-1}\left(1 - \sqrt{ \log (2/\alpha)/(2n) } \right)\right]$. This property directly leads to the form of the adaptive interval (\ref{eq:intersect-quan}).

For the general multivariate setting, we extend the one-dimensional idea by projection. 
For $u \in \cS^{d-1}$, define the projected empirical CDF and the projected empirical quantile function as
$F_n(t; u) = \frac{1}{n} \sum_{i \in [n]} \indi(X_i^\top u \leq t)$ and $F_n^{-1}(q; u) = \inf\{t \in \bbR: F_n(t; u) \geq q\}$, respectively. Following the definitions of $\wh{\theta}_L(t)$ and $\wh{\theta}_R(t)$ in one dimension, we define a robust estimator for $\theta^{\top}u$ as
\begin{equation}
\wh{\theta}^u(t) = F_n^{-1}( 2(1 - \Phi(t) ); u )  + t.\label{eq:theta-u-quantile}
\end{equation}
The estimator is obtained by inverting a Gaussian tail probability. When the contamination proportion is smaller than $1-\Phi(t)$, the  quantile $F_n^{-1}( 2(1 - \Phi(t) ); u )$ is unaffected by contamination and can be used to estimate $\theta^\top u$. Larger values of $t$ yield tighter uncertainty quantification, while smaller values of $t$ allow robustness against larger contamination levels.

The property of (\ref{eq:theta-u-quantile}) is characterized by the following proposition.
\begin{Proposition}\label{th:estimation-guarantee}
	Suppose $\alpha\in(0,1)$ and  $n\geq C(\log(1/\alpha)+d)$ for a sufficiently large $C > 0$. Simultaneously for all $t\in\left[1.6, \Phi^{-1}\left(1 - 300 \sqrt{ \frac{d}{n} } - \sqrt{ \frac{\log(1/\alpha)}{n} } \right)\right]$ and for all $u \in \cS^{d-1}$,
\begin{equation} \label{ineq:upper-side}
	\wh{\theta}^u(t)-\theta^\top u\leq \frac{2}{t},
\end{equation}
with $P_{\epsilon, \theta, Q}$-probability at least $1-\alpha$, uniformly over all $\epsilon\in[0,0.05]$, all $\theta\in\mathbb{R}^d$, and all $Q$. Moreover, set 
\begin{equation} 
		t_{\epsilon} = \Phi^{-1}\left( 1- \epsilon - 300 \sqrt{ \frac{d}{n} } - \sqrt{ \frac{\log(1/\alpha)}{n} }  \right), \label{def:t-constant}
\end{equation} and then simultaneously for all $u \in \cS^{d-1}$,
\begin{equation}
\wh{\theta}^u(t_{\epsilon})-\theta^\top u \geq 0, \label{eq:the-other-side}
\end{equation}
with $P_{\epsilon, \theta, Q}$-probability at least $1-\alpha$, uniformly over all $\epsilon\in[0,0.05]$, all $\theta\in\mathbb{R}^d$ and all $Q$.
\end{Proposition}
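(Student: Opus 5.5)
Both parts reduce to one uniform deviation bound for the projected empirical CDF. Write $X_i = \theta + Z_i$ for the inliers. Let $\delta_n = 300\sqrt{d/n} + \sqrt{\log(1/\alpha)/n}$. I would first show that, with probability at least $1-\alpha$, simultaneously for all $u\in\cS^{d-1}$ and all $s\in\bbR$,
\begin{equation*}
\bigl| F_n(\theta^\top u + s; u) - P_{\epsilon,\theta,Q}(X^\top u \le \theta^\top u + s)\bigr| \le \delta_n .
\end{equation*}
The class of halfspaces $\{x: x^\top u\le c\}$ has VC dimension $d+1$, so this follows from the standard VC/DKW-type inequality (Talagrand or Bousquet concentration plus a chaining bound on the expected supremum). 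The constant $300$ gives room for the universal constant in the VC bound, and $n\ge C(\log(1/\alpha)+d)$ keeps $\delta_n$ small. I would cite this as a standard empirical-process fact. Verifying that the constants fit is the only real obstacle; the rest is deterministic on this event. The population CDF satisfies two sandwich bounds:
\begin{equation*}
(1-\epsilon)\Phi(s) \le P(X^\top u\le\theta^\top u+s) \le (1-\epsilon)\Phi(s)+\epsilon .
\end{equation*}

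\textbf{Upper side (\ref{ineq:upper-side}).} Set $q=2(1-\Phi(t))$. It suffices to show $F_n(\theta^\top u + 2/t - t; u)\ge q$, since then $F_n^{-1}(q;u)\le \theta^\top u - t + 2/t$. Using the lower bound and the deviation bound,
\begin{equation*}
F_n(\theta^\top u - t + 2/t;u)\ge (1-\epsilon)\bigl(1-\Phi(t-2/t)\bigr) - \delta_n .
\end{equation*}
The Gaussian tail ratio argument gives $1-\Phi(t-2/t)\ge e^{2-2/t^2}\cdot\frac{t}{t-2/t}\cdot(\text{Mills correction})\,(1-\Phi(t))$. This is at least about $3(1-\Phi(t))$ for $t\ge1.6$, which can be checked via $\phi(t-2/t)/\phi(t)=e^{2-2/t^2}$ and Mills-ratio bounds, with the small-$t$ end verified numerically. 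With $1-\epsilon\ge 0.95$, the factor $0.95\cdot 3 - 2$ leaves slack of order $(1-\Phi(t))$. The constraint $t\le\Phi^{-1}(1-\delta_n)$ gives $1-\Phi(t)\ge\delta_n$, so the slack absorbs $\delta_n$. Hence the inequality holds uniformly over $t$ and $u$.

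\textbf{Other side (\ref{eq:the-other-side}).} It suffices to show $F_n(\theta^\top u - t_\epsilon - 0;u) < q$, where now $q = 2(1-\Phi(t_\epsilon))$ and the left side is the left limit, so that the quantile is at least $\theta^\top u - t_\epsilon$. By the upper bound and the deviation bound (applied with $<$ in place of $\le$, which the halfspace class also covers),
\begin{equation*}
F_n(\cdot)\le (1-\epsilon)\bigl(1-\Phi(t_\epsilon)\bigr)+\epsilon+\delta_n \le \bigl(1-\Phi(t_\epsilon)\bigr)+\epsilon+\delta_n .
\end{equation*}
By the definition of $t_\epsilon$, $\epsilon+\delta_n = 1-\Phi(t_\epsilon)$, so the right side is at most $2(1-\Phi(t_\epsilon))=q$. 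To get strict inequality, or handle the boundary case, I would either tighten with $(1-\epsilon)<1$ when $\epsilon>0$ or use the $\inf$ definition of the quantile directly, which only needs $F_n(s;u)<q$ for $s<\theta^\top u-t_\epsilon$. The latter follows from the same bound applied at such $s$, where $\Phi$ is strictly smaller. This gives $\wh\theta^u(t_\epsilon)\ge\theta^\top u$ for all $u$.
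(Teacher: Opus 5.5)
Your proposal follows the paper's proof essentially step for step. Both use a distribution-free uniform DKW bound over halfspaces; the paper's Lemma~\ref{lm:multi-DKW} has constant $150\sqrt{(d+1)/n}+\sqrt{\log(1/\alpha)/(2n)}$, which is at most $\delta_n$. Both then invert the projected quantile deterministically using the sandwich $(1-\epsilon)\Phi(s)\le F(\theta^\top u+s;u)\le(1-\epsilon)\Phi(s)+\epsilon$.

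One numerical step on the upper side does not close as written. If the tail ratio is only ``about $3$'', the slack is $(0.95\cdot 3-2)(1-\Phi(t))=0.85\,(1-\Phi(t))$. At the right endpoint $t=\Phi^{-1}(1-\delta_n)$ this equals $0.85\,\delta_n<\delta_n$, so it does not absorb $\delta_n$. You actually need $(1-\Phi(t-2/t))/(1-\Phi(t))\ge 3/(1-\epsilon_{\max})\approx 3.16$.

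The paper gets this with no Mills ratios or numerics. For $x\ge t$ one has $\phi(x-2/t)/\phi(x)=e^{2x/t-2/t^2}\ge e^{2-2/t^2}$. Integrating over $[t,\infty)$ gives $1-\Phi(t-2/t)\ge e^{2-2/t^2}(1-\Phi(t))\ge e^{1.21875}(1-\Phi(t))\approx 3.38\,(1-\Phi(t))$ for all $t\ge 1.6$. Since $0.95\cdot 3.38>3$, the slack is at least $1-\Phi(t)\ge\delta_n$ over the whole range. Your Mills-ratio route is quite lossy at $t=1.6$, where $t-2/t=0.35$ and the lower Mills bound is weak; that is why you needed numerics there.

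On the other side, your handling of strictness is cleaner than the paper's. You use the infimum definition and check $F_n(s;u)<q$ only for $s<\theta^\top u-t_\epsilon$, where $\Phi$ is strictly smaller. The paper's chain instead passes from a strict to a non-strict inequality. At $\epsilon=0$ and $t=t_0$ that step fails, because $F(\theta^\top u-t_0;u)=\Phi(-t_0)$ exactly and equality holds. It can be repaired there only by using the slack between the DKW constant and $\delta_n$.
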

Similar to (\ref{eq:one-sided-uq-1d}), the inequality (\ref{ineq:upper-side}) is a one-sided uncertainty quantification of $\wh{\theta}^u(t)$. Intersecting (\ref{ineq:upper-side}) for all $u$ and $t$, we obtain the following adaptive confidence set,
\begin{equation} \label{eq:adaptive-CI}
	\begin{split}
		\widehat{\CS}=  \left\{ \theta \in \bbR^d: \theta^\top u  \geq \max_{1.6 \leq t \leq  \Phi^{-1}\left(1 - 300 \sqrt{ \frac{d}{n} } - \sqrt{ \frac{\log(1/\alpha)}{n} } \right) } \left(\wh{\theta}^u(t)-\frac{2}{t} \right) \textnormal{ for all } u \in \cS^{d-1} \right\},
	\end{split}
\end{equation}  
and its coverage property immediately follows. To analyze the diameter, we note that $|\widehat{\CS}|=\sup_{u\in \cS^{d-1}}\sup_{\theta_1,\theta_2\in\widehat{\CS}} \left(\theta_1^{\top}u-\theta_2^{\top}u\right)$. To bound $\theta_1^{\top}u-\theta_2^{\top}u$, we use the property that any $\theta_1,\theta_2\in \widehat{\CS}$ satisfy $-\theta_1^{\top}u\geq \wh{\theta}^{-u}(t)-\frac{2}{t}$ and $\theta_2^{\top}u\geq \wh{\theta}^u(t)-\frac{2}{t}$, and thus
$$\theta_1^{\top}u-\theta_2^{\top}u\leq -\wh{\theta}^{-u}(t)-\wh{\theta}^u(t)+\frac{4}{t}.$$
Taking $t=t_{\epsilon}$ in (\ref{def:t-constant}) and applying (\ref{eq:the-other-side}), we have
$$-\wh{\theta}^{-u}(t_{\epsilon})-\wh{\theta}^u(t_{\epsilon})+\frac{4}{t_{\epsilon}}\leq \theta^{\top}u-\theta^{\top}u+\frac{4}{t_{\epsilon}}=\frac{4}{t_{\epsilon}}.$$
Hence, the diameter of $\widehat{\CS}$ is bounded by
\begin{equation}\label{ineq:length-guarantee}
    |\widehat{\CS}|\leq \frac{4}{t_{\epsilon}}\asymp \frac{1}{\sqrt{\log (n/d )} }+\frac{1}{\sqrt{\log (1/\epsilon)}} + \frac{1}{\sqrt{ \log(n/\log(1/\alpha)) }}.
\end{equation}
We refer the reader to Lemma \ref{lm:t-epsilon-bound} in Appendix \ref{sec:proof-additional-lemmas} for the order of $t_{\epsilon}$. The following theorem summarizes the properties of the confidence set (\ref{eq:adaptive-CI}).

\begin{Theorem} \label{thm:upper-bound-general-Gaussian}
There exist universal constants $C,C'>0$, such that for any $\alpha\in(0,1)$ and $n\geq C(\log(1/\alpha)+d)$, the set $\widehat{\CS}$ defined in (\ref{eq:adaptive-CI}) satisfies
	\begin{equation*}
		\begin{split}
			&\inf_{ \epsilon \in [0, 0.05], \theta, Q} P_{\epsilon, \theta, Q}\left( \theta \in\widehat{\CS}\right) \geq 1-\alpha,\\
			&\inf_{\epsilon \in [0, 0.05], \theta, Q } P_{\epsilon, \theta, Q}\left( |\widehat{\CS}| \leq C' \left(\frac{1}{\sqrt{\log (n/d )} }+\frac{1}{\sqrt{ \log(1/\epsilon) }}+ \frac{1}{\sqrt{ \log(n/\log(1/\alpha)) }} \right) \right) \geq 1-\alpha.
		\end{split}
	\end{equation*}
\end{Theorem}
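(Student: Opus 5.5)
The plan is to derive Theorem \ref{thm:upper-bound-general-Gaussian} from Proposition \ref{th:estimation-guarantee}, following the argument sketched just before the statement. There are three steps: coverage, the diameter bound, and the order of $t_\epsilon$.

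\emph{Coverage.} On the event of Proposition \ref{th:estimation-guarantee} where (\ref{ineq:upper-side}) holds, which has probability at least $1-\alpha$, we have $\theta^\top u\ge \wh\theta^u(t)-2/t$ simultaneously for all $u\in\cS^{d-1}$ and all admissible $t$. Taking the maximum over $t$ gives exactly $\theta\in\widehat{\CS}$. This holds uniformly over $\epsilon\in[0,0.05]$, $\theta$ and $Q$, so the first display follows.

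\emph{Diameter.} Let $E_1$ be the event where (\ref{eq:the-other-side}) holds for all $u$. One must first check that $t_\epsilon$ lies in the admissible range $[1.6,\Phi^{-1}(1-300\sqrt{d/n}-\sqrt{\log(1/\alpha)/n})]$.
\begin{itemize}
\item The upper endpoint is immediate, since $\epsilon\ge0$.
\item The lower endpoint needs $\epsilon+300\sqrt{d/n}+\sqrt{\log(1/\alpha)/n}\le 1-\Phi(1.6)\approx0.0548$. Since $\epsilon\le 0.05$, this is the main obstacle: the constant $C$ in $n\ge C(\log(1/\alpha)+d)$ must make the two sampling terms at most $0.0048$, which is possible for $C$ large.
\end{itemize}
The conclusion is an adaptive bound: $|\widehat{\CS}|$ is always at most $4/t$ for the admissible $t$, and $t_\epsilon$ is admissible, so $|\widehat{\CS}|\le 4/t_\epsilon$ with no issue.

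On $E_1$, fix $u$ and any $\theta_1,\theta_2\in\widehat{\CS}$. Applying the defining constraint of $\widehat{\CS}$ in directions $u$ and $-u$ at $t=t_\epsilon$ gives
\[
\theta_1^\top u-\theta_2^\top u\le -\wh\theta^{-u}(t_\epsilon)-\wh\theta^u(t_\epsilon)+4/t_\epsilon .
\]
By (\ref{eq:the-other-side}), $\wh\theta^u(t_\epsilon)\ge\theta^\top u$ and $\wh\theta^{-u}(t_\epsilon)\ge-\theta^\top u$, so the right-hand side is at most $4/t_\epsilon$. Taking the supremum over $u$ yields $|\widehat{\CS}|\le 4/t_\epsilon$ on $E_1$, and $P(E_1)\ge1-\alpha$.

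\emph{Order of $t_\epsilon$.} It remains to show
\[
1/t_\epsilon\lesssim 1/\sqrt{\log(n/d)}+1/\sqrt{\log(1/\epsilon)}+1/\sqrt{\log(n/\log(1/\alpha))},
\]
which is the content of Lemma \ref{lm:t-epsilon-bound}. Set $p=\epsilon+300\sqrt{d/n}+\sqrt{\log(1/\alpha)/n}\le 0.0548$. The Gaussian tail bound $1-\Phi(t)\le e^{-t^2/2}$ gives $\Phi^{-1}(1-p)\ge\sqrt{2\log(1/p)}$ whenever $p$ is small. Since $t_\epsilon\ge1.6$ in the remaining range, $1/t_\epsilon\lesssim 1/\sqrt{\log(1/p)}$ holds uniformly after adjusting constants.

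Finally, $p\le 3\max\{\epsilon,300\sqrt{d/n},\sqrt{\log(1/\alpha)/n}\}$. Hence $\log(1/p)$ is at least a constant times the minimum of $\log(1/\epsilon)$, $\tfrac12\log(n/d)$ and $\tfrac12\log(n/\log(1/\alpha))$, each up to additive constants. Those constants are absorbed using $n\ge C(d+\log(1/\alpha))$ together with $t_\epsilon\ge 1.6$. Using $1/\sqrt{\min}\le\sum 1/\sqrt{\cdot}$ then gives the stated diameter bound with a universal $C'$.
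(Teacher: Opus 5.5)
Your structure matches the paper's. You get coverage directly from (\ref{ineq:upper-side}). You get $|\widehat{\CS}|\le 4/t_\epsilon$ on the event where (\ref{eq:the-other-side}) holds, once $t_\epsilon$ is checked to be admissible. You then bound the order of $t_\epsilon$, which is the paper's Lemma \ref{lm:t-epsilon-bound}. The coverage and $4/t_\epsilon$ steps are correct as you carry them out.

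Delete the sentence saying $|\widehat{\CS}|$ is ``always at most $4/t$ for the admissible $t$''. Taken literally it gives $|\widehat{\CS}|\le 4/t$ for the largest admissible $t$, regardless of $\epsilon$, which is impossible by Theorem \ref{th:lower-bound-Gaussian}. What holds deterministically for every admissible $t$ is only $|\widehat{\CS}|\le\sup_{u}\bigl(-\wh{\theta}^{-u}(t)-\wh{\theta}^{u}(t)\bigr)+4/t$. The random term is shown to be nonpositive only at $t=t_\epsilon$, which is exactly what your next paragraph does.

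The genuine error is in the last step. The inequality $\Phi^{-1}(1-p)\ge\sqrt{2\log(1/p)}$ is false for every $p\in(0,1/2]$. For instance, $\Phi^{-1}(0.95)\approx 1.645<\sqrt{2\log 20}\approx 2.45$. The tail bound you invoke points the other way. With $x=\Phi^{-1}(1-p)$, the bound $p=1-\Phi(x)\le e^{-x^2/2}$ gives $x\le\sqrt{2\log(1/p)}$, which is an \emph{upper} bound on the quantile.

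To get $t_\epsilon\gtrsim\sqrt{\log(1/p)}$ you need a \emph{lower} bound on the Gaussian tail. The Mills-ratio bound $1-\Phi(x)\ge(1/x-1/x^3)e^{-x^2/2}/\sqrt{2\pi}$ works. For $x\ge 2$ it gives $p\ge e^{-x^2}/\sqrt{2\pi}$, and hence $t_\epsilon\ge\sqrt{\log(1/(\sqrt{2\pi}p))}\ge\frac12\sqrt{\log(1/p)}$. This is Lemma \ref{lm:normal-quantile}, on which the paper's proof of Lemma \ref{lm:t-epsilon-bound} rests. The case $t_\epsilon<2$ is covered by $t_\epsilon\ge 1.6$.

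With that substitution, the rest of your argument goes through: you bound $p$ by three times the largest of its terms and absorb additive constants because $p\le 0.0548$.
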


Despite its statistical optimality, the adaptive confidence set 
(\ref{eq:adaptive-CI}) is not computationally feasible. 
Specifically, it requires time exponential in the dimension. 
Section \ref{sec:quasi-poly-algorithm} introduces an algorithm that partially addresses this issue.

\subsection{Lower Bound} \label{sec:optimality-of-ARCI}

In this section, we establish a lower bound that matches the upper bound rate of Theorem \ref{thm:upper-bound-general-Gaussian}.
\begin{Theorem} \label{th:lower-bound-Gaussian} There exist universal constants $c, C$ such that for any $\alpha \in (0,1/4)$ and for any $n \geq C (d + \log(1/\alpha))$,
\begin{equation} \label{ineq:r-stat-lower}
	r_{\alpha}(\epsilon, 0.05) \geq c \left( \frac{1}{\sqrt{\log (n/d)}}+ \frac{1}{\sqrt{\log(1/\epsilon)}}+ \frac{1}{\sqrt{ \log(n/\log(1/\alpha)) }} \right),
\end{equation}
for any $\epsilon\in[0,0.05]$.
\end{Theorem}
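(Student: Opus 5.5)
The plan is to reduce the lower bound to a two-point (or two-mixture) testing argument. An adaptive confidence set must cover under contamination level $0.05$. Therefore, if a clean or $\epsilon$-contaminated distribution is statistically indistinguishable from some $0.05$-contaminated distribution with a shifted mean, the set must contain both means with substantial probability under the clean law.

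Concretely, suppose two laws $P_0=P_{\epsilon,\theta_0,Q_0}$ and $P_1=P_{0.05,\theta_1,Q_1}$ satisfy $\TV(P_0^{\otimes n},P_1^{\otimes n})\le 1/4$ (or, for a mixture $\bar P_1=\int P_{0.05,\theta_v,Q_v}\,d\pi(v)$, the bound $\TV(P_0^{\otimes n},\bar P_1^{\otimes n})\le 1/4$). For any $\widehat{\CS}\in\cI_\alpha(0.05)$, coverage gives $P_1(\theta_1\in\widehat{\CS})\ge 1-\alpha$, hence $P_0(\theta_1\in\widehat{\CS})\ge 3/4-\alpha>1/2$. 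Coverage under $P_0$ gives $P_0(\theta_0\in\widehat{\CS})\ge 1-\alpha$, and $\alpha<1/4$. So with $P_0$-probability exceeding $\alpha$ both points lie in the set, and $|\widehat{\CS}|\ge\|\theta_0-\theta_1\|$. This yields $r_\alpha(\epsilon,0.05)\ge\|\theta_0-\theta_1\|$. The three terms are handled separately, and the lower bound is their maximum up to constants.

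\textbf{One-dimensional terms.} For $1/\sqrt{\log(1/\epsilon)}$ and $1/\sqrt{\log(n/\log(1/\alpha))}$ I would reuse the construction behind the one-dimensional result of \cite{luo2024adaptive}, embedded along one coordinate. Take $\theta_0=0$ and $\theta_1=\delta e_1$. Write $(1-0.05)N(\delta,1)+0.05Q_1 = (1-\epsilon')N(0,1)+\epsilon' Q_0$ with $Q_1,Q_0$ chosen so that both sides equal the same distribution. This requires $0.95\,\phi(x-\delta)\le\phi(x)$ except on a region of small Gaussian mass. The ratio $\phi(x-\delta)/\phi(x)=e^{\delta x-\delta^2/2}$ exceeds $1/0.95$ only for $x\gtrsim 1/\delta$. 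The excess mass there is about $e^{-c/\delta^2}$, and it can be absorbed by $\epsilon$-contamination when $\delta\asymp1/\sqrt{\log(1/\epsilon)}$. This gives exact equality $P_0=P_1$ and hence the $\epsilon$-term. For the $\alpha$-term, set $\epsilon=0$ and choose $\delta$ so that $e^{-c/\delta^2}\asymp \log(1/\alpha)/n$. A modified testing step is then needed, in which the likelihood ratio on $n$ samples is controlled with probability of order $\alpha$ instead of $1/4$. I would argue that with probability at least about $\alpha^{c}$ no sample falls in the differing tail, and conclude through a coupling.

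\textbf{Dimension term $1/\sqrt{\log(n/d)}$ (main obstacle).} Here a single pair is insufficient and a mixture over directions is required, in the spirit of NGCA. Take $P_0=N(0,I_d)$. For each $v\in\cS^{d-1}$ let $P_v=0.95N(\delta v,I_d)+0.05Q_v$, where $Q_v$ is non-Gaussian only along $v$ and standard Gaussian on $v^\perp$. The marginal along $v$ is a one-dimensional law $A=0.95N(\delta,1)+0.05q$ chosen so that $A$ has mean zero, matching the first moment of $N(0,1)$. Its chi-square distance should be $\chi^2(A,N(0,1))\lesssim e^{-c/\delta^2}$, obtained by using $q$ to cancel the shift wherever $e^{\delta x}$ is moderate. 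I would then use the standard NGCA bound
\[
\chi^2\bigl(\textstyle\int P_v^{\otimes n}d\pi(v),\,P_0^{\otimes n}\bigr)+1=\bbE_{v,v'}\bigl[(1+\chi_{A}(v^\top v'))^n\bigr],
\]
where the correlation $\chi_A(\rho)=\sum_{k\ge2}\rho^k a_k^2$ is expressed through Hermite coefficients $a_k$ of $A$; the $k=1$ term vanishes by mean matching. Since $\rho=v^\top v'$ is of order $d^{-1/2}$, the leading contribution is about $n\rho^2\sum_k a_k^2\approx (n/d)\,e^{-c/\delta^2}$, which is bounded when $\delta\asymp1/\sqrt{\log(n/d)}$. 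The delicate points are the tail event where $|\rho|$ is large, which is controlled by concentration of $\rho$ on the sphere together with $n\ge Cd$, and verifying that a valid nonnegative $q$ exists with an exponentially small chi-square distance. I expect this to be the hardest step. To finish, note that $\theta_v=\delta v$ differs from $0$ by $\delta$ for every $v$, so the two-point argument above applies to the mixture directly.
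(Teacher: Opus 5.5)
Your proposal is correct. It uses the same ingredients as the paper, organized somewhat differently.

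\textbf{Shared ingredients.} For the $1/\sqrt{\log(1/\epsilon)}$ term, the paper simply cites \cite{luo2024adaptive} via the embedding (\ref{eq:CS-to-CI}); you re-derive the exact indistinguishability $P_0=P_1$ yourself. For the $1/\sqrt{\log(n/d)}$ term you use the same NGCA mixture over directions as the paper: a mean-zero marginal $A$ with $\chi^2(A,N(0,1))\lesssim e^{-c/\delta^2}$, the bound $|\chi_A(\rho)|\le\rho^2\chi^2(A,N(0,1))$, and a second-moment computation over $\rho=v^\top v'$.

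\textbf{Different organization.} The paper goes through a test: Lemma \ref{prop:CI-imply-test-new} bounds Type-1 plus Type-2 error by $3\alpha$. A single NGCA testing bound (Proposition \ref{prop:test-lower-bound-gaussian}) then treats the $d$- and $\alpha$-terms together. Inside Lemma \ref{lm:NGCA-stat} it splits into two cases:
\begin{itemize}
\item $n\chi^2\le\log(1/\alpha)/4$, handled by $\TV\le 1-\tfrac12 e^{-n\chi^2}$. Your coupling bound $1-\TV(P_0^{\otimes n},P_1^{\otimes n})\ge(1-\TV(P_0,P_1))^n$ is the same idea.
\item $n\chi^2\le d/4$, handled by the same second-moment calculation you describe.
\end{itemize}
You instead argue directly from $\TV$ to the diameter, and you separate the $\alpha$-term into a one-dimensional two-point argument.

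\textbf{What your route gains.} Your reduction with $\TV\le 1/4$ works for every $\alpha<1/4$ at once. The paper runs the NGCA argument only for $\alpha<0.05$. For $\alpha\in[0.05,1/4)$ it falls back on the one-dimensional bound $1/\sqrt{\log n}$. That bound does not control $1/\sqrt{\log(n/d)}$ when $d$ is a constant fraction of $n$, which the assumption $n\ge C(d+\log(1/\alpha))$ allows. So your organization covers that regime more cleanly.

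\textbf{Two small points to tighten.}
\begin{itemize}
\item Your coupling step needs $\alpha^{c}-2\alpha>\alpha$. For $\alpha$ near $1/4$ this fails, so you should note there that the $\alpha$-term is dominated by the $d$-term.
\item For the step you flag as hardest, the paper's explicit choice is
$q(x)=\bigl[\phi(x)-0.95\,\phi(x-r)\indi(|x-r|\le t)/\bbP(|N(0,1)|\le t)\bigr]/0.05$.
Truncating symmetrically about $r$ makes $A$ automatically mean-zero. The choice is nonnegative once $r\lesssim 1/t$, and it gives $\chi^2(A,N(0,1))\lesssim e^{-t^2/8}$. This resolves the construction you left open.
\end{itemize}
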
 
Recall the definition of $r_{\alpha}(\epsilon, \epsilon_{\max})$ in (\ref{eq:minimax-length-ARCI}). Theorems \ref{thm:upper-bound-general-Gaussian} and \ref{th:lower-bound-Gaussian} jointly imply that
$$r_{\alpha}(\epsilon, 0.05)\asymp\frac{1}{\sqrt{\log (n/d)}}+ \frac{1}{\sqrt{\log(1/\epsilon)}}+ \frac{1}{\sqrt{ \log(n/\log(1/\alpha)) }},$$
extending the one-dimensional result in \cite{luo2024adaptive}. We note that the sharp dependence on $\alpha$ is new, even when $d=1$.

To prove the lower bound result, we only need to show $r_{\alpha}(\epsilon, 0.05)\gtrsim \frac{1}{\sqrt{\log (n/d)}}+\frac{1}{\sqrt{ \log(n/\log(1/\alpha)) }}$, since $r_{\alpha}(\epsilon, 0.05)\gtrsim \frac{1}{\sqrt{\log(1/\epsilon)}}$ can be derived by leveraging the hardness of constructing adaptive confidence intervals
in \cite{luo2024adaptive}. For any confidence set $\wh{\CS}$, we define an interval
\begin{equation} \label{eq:CS-to-CI}
	\wh{\CI} =  \left[\inf\{ e_1^\top \theta: \theta \in \wh{\CS}\}, \sup\{ e_1^\top \theta: \theta \in \wh{\CS} \}\right],
\end{equation}
where $e_1$ is the first canonical basis vector in $\mathbb{R}^d$.
It is clear that $\theta\in \wh{\CS}$ implies $e_1^{\top}\theta\in \wh{\CI}$ so that $\wh{\CI}$ is an adaptive confidence interval for the first coordinate of $\theta$. Moreover, the length of $\wh{\CI}$ is at most the diameter of $\wh{\CS}$. Thus, the length lower bound of \cite{luo2024adaptive}, which is of order $\frac{1}{\sqrt{\log(1/\epsilon)}}$, directly implies the diameter lower bound for any adaptive confidence set.

To prove $r_{\alpha}(\epsilon, 0.05)\gtrsim \frac{1}{\sqrt{\log (n/d)}}+\frac{1}{\sqrt{ \log(n/\log(1/\alpha)) }}$, it suffices to show $r_{\alpha}(0, 0.05)\gtrsim \frac{1}{\sqrt{\log (n/d)}}+\frac{1}{\sqrt{ \log(n/\log(1/\alpha)) }}$ given the inequality $r_{\alpha}(\epsilon, 0.05)\geq r_{\alpha}(0, 0.05)$. It turns out $r_{\alpha}(0, 0.05)$ can be bounded below by the hardness of the following hypothesis testing problem.
\begin{equation}\label{eq:test-def}
	\begin{split} 
		& H_{0}: X_1, \ldots, X_n \overset{iid}\sim P = N(0, I_d) \\ 
		& H_{1}: X_1, \ldots, X_n \overset{iid}\sim P \in \left\{ (1 - \epsilon_{\max})N(ru, I_d) + \epsilon_{\max}Q : Q, u\in\cS^{d-1} \right\},
	\end{split}
\end{equation}
where $\epsilon_{\max} = 0.05$ and $r>0$ is the separation between the null and the alternative. In (\ref{eq:test-def}), one needs not only to test whether the mean is zero, but also to determine whether the data are clean or contaminated. A testing function for (\ref{eq:test-def}) can be constructed from an adaptive confidence set,
\begin{equation}\label{eq:reverse-t-new}
\phi=  \sup_{u\in\cS^{d-1}}  \indi \left( ru  \in  \widehat{\CS}\right) ,
\end{equation}
which rejects the null whenever the confidence set contains some mean vector in the alternative.

\begin{Lemma}\label{prop:CI-imply-test-new}
Suppose $\widehat{\CS}$ is a confidence set that satisfies
\begin{equation*}
		\begin{split}
			\textnormal{(coverage)} &\quad \inf_{\theta, Q} P_{0.05, \theta, Q}\left( \theta \in\widehat{\CS}\right) \geq 1-\alpha,\\
			\textnormal{(diameter at } \epsilon = 0) & \quad P_{0}\left( |\widehat{\CS}| < r  \right) \geq 1-\alpha,
		\end{split}
	\end{equation*} where $P_0 = N(0,I_d)$.
Then, the testing function $\phi$ defined in (\ref{eq:reverse-t-new}) satisfies the following Type-1 and Type-2 error control:
\begin{equation*}
		\begin{split}
			 &  P_{0}\left(\phi=1\right) + \sup_{Q, u\in \cS^{d-1}} P_{0.05, ru,Q}\left(\phi=0\right)\leq 3\alpha.
		\end{split}
	\end{equation*}
\end{Lemma}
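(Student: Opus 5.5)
We bound the two error terms separately, using coverage for the Type-2 error and coverage together with the diameter condition for the Type-1 error.

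\textbf{Type-2 error.} Fix any $u\in\cS^{d-1}$ and any $Q$, and let $\theta=ru$. Under $P_{0.05,ru,Q}$, the event $\{ru\in\widehat{\CS}\}$ implies $\phi=1$, since $\phi$ is the supremum over $u'\in\cS^{d-1}$ of $\indi(ru'\in\widehat{\CS})$ and $u'=u$ is one of the terms. The coverage condition, applied at $\theta=ru$, then gives
\begin{equation*}
P_{0.05,ru,Q}(\phi=0)\leq P_{0.05,ru,Q}(ru\notin\widehat{\CS})\leq \alpha.
\end{equation*}
Taking the supremum over $Q$ and $u$ bounds the second term by $\alpha$.

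\textbf{Type-1 error.} First note that $N(0,I_d)$ is itself a member of the contaminated class $\{P_{0.05,0,Q}:Q\}$: take $Q=N(0,I_d)$, so that $(1-0.05)N(0,I_d)+0.05\,N(0,I_d)=N(0,I_d)$. Coverage at $\theta=0$ therefore yields
\begin{equation*}
P_0(0\in\widehat{\CS})\geq 1-\alpha.
\end{equation*}
The diameter condition gives $P_0(|\widehat{\CS}|<r)\geq 1-\alpha$.

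On the intersection of these two events, suppose $\phi=1$. Then some $u\in\cS^{d-1}$ has $ru\in\widehat{\CS}$. Since $0\in\widehat{\CS}$ as well,
\begin{equation*}
|\widehat{\CS}|\geq \|ru-0\|=r,
\end{equation*}
which contradicts $|\widehat{\CS}|<r$. Hence $\phi=0$ on this intersection, and by a union bound $P_0(\phi=1)\leq 2\alpha$.

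\textbf{Conclusion.} Adding the two bounds gives $P_0(\phi=1)+\sup_{Q,u}P_{0.05,ru,Q}(\phi=0)\leq 2\alpha+\alpha=3\alpha$.

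There is no real obstacle here. The only point needing care is that the diameter condition is stated with a strict inequality, and the contradiction argument relies on exactly that. (Measurability of $\phi$, which involves a supremum over $u$, could be handled with outer probabilities if needed.)
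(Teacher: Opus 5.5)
Your proof is correct and matches the paper's argument. The Type-2 error is bounded directly by coverage at $\theta=ru$. The Type-1 error is bounded by splitting on whether $0\in\widehat{\CS}$ and noting that $0,ru\in\widehat{\CS}$ forces $|\widehat{\CS}|\geq r$, which gives $2\alpha+\alpha=3\alpha$. Your explicit remark that $P_0=P_{0.05,0,Q}$ with $Q=N(0,I_d)$, which justifies applying coverage under $P_0$, is a step the paper uses only implicitly.
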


In other words, for an $r$ such that the null and the alternative in (\ref{eq:test-def}) cannot be distinguished, it must lower bound $r_{\alpha}(0, 0.05)$. The following proposition characterizes the hardness of the testing problem.

\begin{Proposition}\label{prop:test-lower-bound-gaussian}
There exist universal constants $C,c>0$ such that for any $\alpha \in (0,0.05)$ and $n \geq C\left(d + \log(1/\alpha) \right)$,
	we have
		\begin{equation} \label{ineq:testing-lower-bound-claim}
			\inf_{\phi} \left\{ P_{0 } \left( \phi = 1 \right) + \sup_{Q, u\in \cS^{d-1}} P_{0.05, ru,Q}\left(\phi=0\right) \right\} \geq 4\alpha,
		\end{equation}
        with $r = c \left(\frac{1}{\sqrt{ \log(n/\log(1/\alpha)) }} + \frac{1}{\sqrt{\log (n/d)}} \right)$.
\end{Proposition}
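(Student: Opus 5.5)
We prove the bound by Le Cam's method with a mixture alternative of non-Gaussian component analysis (NGCA) type. Draw $u$ uniformly from $\cS^{d-1}$ and let $P_u$ be the distribution whose projection onto $u$ is a one-dimensional law $A$ and whose projection onto $u^\perp$ is $N(0,I_{d-1})$, independent of the $u$-component. If $A$ can be written as $A=0.95\,N(r,1)+0.05\,q$ for some probability measure $q$, then $P_u=0.95\,N(ru,I_d)+0.05\,Q_u$ with $Q_u$ equal to $q$ along $u$ and standard Gaussian along $u^\perp$. So every $P_u$ lies in the alternative of (\ref{eq:test-def}). The left-hand side of (\ref{ineq:testing-lower-bound-claim}) is then bounded below by the optimal Bayes testing error between $P_0^{\otimes n}$ and the mixture $\bar P=\bbE_u P_u^{\otimes n}$. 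The whole proof reduces to bounding $\chi^2(\bar P,P_0^{\otimes n})$ (for the $d$-term) and $\KL(A^{\otimes n},N(0,1)^{\otimes n})$ (for the $\alpha$-term).

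\textbf{Step 1: the one-dimensional construction.} We need $A$ with three properties: (i) $a(x)\ge 0.95\,\phi(x-r)$ for all $x$; (ii) $\int x\,dA=0$, so that $A$ matches the first moment of $N(0,1)$; (iii) $\chi^2(A,N(0,1))\le e^{-c_0/r^2}$.

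The ratio $0.95\,\phi(x-r)/\phi(x)=0.95\,e^{rx-r^2/2}$ is at most $1$ exactly when $x\le T:=r/2+\log(1/0.95)/r\asymp 1/r$. We therefore start from $\max\{\phi,\,0.95\,\phi(\cdot-r)\}$. It has excess mass
$$\delta=\int_T^\infty\bigl(0.95\,\phi(x-r)-\phi(x)\bigr)_+\,dx\le\bar\Phi(T-r)\le e^{-c/r^2},$$
and mean $O(\delta T)$. On $[-1,1]$ and for small $r$, the slack satisfies $\phi-0.95\,\phi(\cdot-r)\ge c\,\phi$. We subtract a combination of two smooth bumps supported in $[-1,1]$, with coefficients of size $O(\delta T)$, chosen to remove the excess mass and cancel the mean exactly. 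Because $\delta T\ll 1$, the result stays above $0.95\,\phi(\cdot-r)$, which gives (i) and (ii).

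For (iii), split the integral into the region $\{x>T\}$ and the bump region. On $\{x>T\}$ we use $\phi(x-r)^2/\phi(x)=e^{r^2}\phi(x-2r)$. The bump region contributes $O((\delta T)^2)$. Together,
$$\chi^2(A,N(0,1))\lesssim \bar\Phi(T-2r)+(\delta T)^2\le e^{-c_0/r^2}.$$
For $r$ not small (that is, $n$ close to $C d$), we simply shrink $c$.

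\textbf{Step 2: the $d$-term.} We use the standard NGCA identity. For independent $u,u'$,
$$\chi^2(\bar P,P_0^{\otimes n})+1=\bbE_{u,u'}\bigl(1+\langle P_u,P_{u'}\rangle_{P_0}-1\bigr)^n .$$
Expanding in Hermite polynomials gives $\langle P_u,P_{u'}\rangle_{P_0}-1=\sum_{k\ge1}\langle u,u'\rangle^k\,\bbE_A[h_k]^2$. The $k=1$ term vanishes by (ii), so this quantity is at most $\langle u,u'\rangle^2\chi^2(A,N(0,1))$. Hence
$$\chi^2(\bar P,P_0^{\otimes n})+1\le\bbE\exp\bigl(n\,e^{-c_0/r^2}\langle u,u'\rangle^2\bigr).$$
Since $\langle u,u'\rangle^2$ is sub-exponential at scale $1/d$, this is $1+O(n e^{-c_0/r^2}/d)$ whenever $n e^{-c_0/r^2}\le c_1 d$. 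That condition holds for $r=c/\sqrt{\log(n/d)}$ with $c$ small. The total variation is then small, and the testing error is at least $1/2\ge 4\alpha$.

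\textbf{Step 3: the $\alpha$-term.} Fix $u=e_1$. Then
$$\KL(P_{e_1}^{\otimes n},P_0^{\otimes n})\le n\,\chi^2(A,N(0,1))\le n\,e^{-c_0/r^2}.$$
By the Bretagnolle--Huber inequality, the sum of the two errors is at least $\tfrac12\exp(-n e^{-c_0/r^2})$. This is at least $4\alpha$ once $n e^{-c_0/r^2}\le\log(1/(8\alpha))$, which holds for $r=c/\sqrt{\log(n/\log(1/\alpha))}$.

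\textbf{Combining.} Both thresholds bound the same error, so the larger one, and hence (up to constants) their sum, is a valid choice of $r$; this gives (\ref{ineq:testing-lower-bound-claim}). The assumption $n\ge C(d+\log(1/\alpha))$ ensures both logarithms exceed a constant, so the small-$r$ regime of Step 1 applies.

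The main obstacle is Step 1. The difficulty is keeping exact mean matching and the domination constraint (i) simultaneously while the $\chi^2$-divergence stays at $e^{-c_0/r^2}$. The bump correction has to be small enough, of order $\delta T$, relative to the slack of order $\phi$ near the origin.
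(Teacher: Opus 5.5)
Your proposal is correct and follows essentially the paper's route. It uses the same NGCA reduction with a first-moment-matching $A$ satisfying $\chi^2(A,N(0,1))\le e^{-c/r^2}$ (the paper writes $e^{-t^2/8}$ with $r\lesssim 1/t$), the same second-moment bound over $\langle u,u'\rangle^2$ for the $d$-term, and the same Bretagnolle--Huber argument with $\KL\le\chi^2$ for the $\alpha$-term. The only real difference is the one-dimensional construction. The paper takes $A(x)=\phi(x)+0.95\,\phi(x-r)\bigl(1-\indi(|x-r|\le t)/\bbP(|N(0,1)|\le t)\bigr)$, a truncation of $N(r,1)$ symmetric about $r$, so the mean vanishes automatically and the bump correction you flag as the main obstacle is unnecessary; your max-plus-bumps version also works.
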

Lemma \ref{prop:CI-imply-test-new} and Proposition \ref{prop:test-lower-bound-gaussian} immediately imply $r_{\alpha}(\epsilon, 0.05) \geq r_{\alpha}(0, 0.05)\gtrsim \frac{1}{\sqrt{\log(n/d)}}+\frac{1}{\sqrt{ \log(n/\log(1/\alpha)) }}$, and therefore establish Theorem \ref{th:lower-bound-Gaussian}.

The proof of Proposition \ref{prop:test-lower-bound-gaussian} relies on a connection between (\ref{eq:test-def}) and the non-Gaussian component analysis (NGCA). The goal of NGCA is to test whether a high-dimensional distribution has a one-dimensional direction whose marginal distribution is different from the standard Gaussian. We first give a definition of the high-dimensional hidden direction distribution, following the formalism of~\cite{diakonikolas2017statistical}.

\begin{Definition}[High-Dimensional Hidden Direction Distribution] \label{def:high-dim-distribution}
For a unit vector $u\in\cS^{d-1}$ and a distribution $A$ on the real line with probability density function $A(\cdot)$, define $P^{A}_u$ to be the distribution over $\mathbb{R}^d$, where $P^{A}_u$ is the product distribution whose orthogonal projection onto the direction of $u$ is $A$, 
and onto the subspace perpendicular to $u$ is the standard $(d-1)$-dimensional normal distribution. 
That is, $P^{A}_u(x) = A(u^\top x) \exp\left(-\|x - (u^\top x)u\|_2^2/2\right)/(2\pi)^{(d-1)/2}$. 
\end{Definition}
NGCA refers to the following hypothesis testing problem:
\begin{equation}\label{eq:NGCA-final}
	\begin{split} 
		& H_{0}: X_1, \ldots, X_n \overset{iid}\sim P = N(0, I_d) \\ 
		& H_{1}: X_1, \ldots, X_n \overset{iid}\sim P\in\left\{P_u^A: u\in\mathcal{S}^{d-1}\right\} \;.
	\end{split}
\end{equation}
NGCA was originally introduced in \cite{blanchard2006search}, and is commonly used to establish computational lower bounds for various high-dimensional problems; see \citep{diakonikolas2017statistical,diakonikolas2023sq} and Chapter~8 of~\cite{diakonikolas2023algorithmic}. In this paper, NGCA is used as a tool 
to prove both the information-theoretic lower bound 
in Proposition \ref{prop:test-lower-bound-gaussian} and the 
computational (SQ) lower bound in Section \ref{sec:GSQlow}. 
The information-theoretic hardness of NGCA under first-moment matching 
is given by the following lemma.

\begin{Lemma}\label{lm:NGCA-stat}
	Suppose $\alpha \in (0,0.05)$ and the non-Gaussian component distribution $A$ satisfies $\int x A(x) dx = 0$ and $ \chi^2(A, N(0,1)) \leq \frac{d + \log(1/\alpha)}{8n} $. Then,
\begin{equation*}
	\inf_{\phi} \left\{ P_0(\phi = 1) + P_{\mu}(\phi = 0) \right\} \geq 4\alpha,
\end{equation*}
where $P_0=N(0,I_d)^{\otimes n}$ and $P_{\mu} = \int (P_u^A)^{\otimes n} d\mu(u)$ with $\mu$ being the uniform measure on $\cS^{d-1}$.
\end{Lemma}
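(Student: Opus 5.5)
Our plan is to use the standard chi-squared (second-moment) method for mixtures. Any test satisfies
$$P_0(\phi=1)+P_\mu(\phi=0)\geq 1-\TV(P_0,P_\mu)\geq 1-\tfrac12\sqrt{\chi^2(P_\mu,P_0)}.$$
So it suffices to show that $\chi^2(P_\mu,P_0)\le 4(1-4\alpha)^2$; since $\alpha<0.05$, the right-hand side is at least $4(0.8)^2=2.56$. This form of the bound is too crude for small $\alpha$, however. The target $4\alpha$ reflects the $\log(1/\alpha)$ term in the hypothesis, so we will instead use the sharper inequality relating testing error to $\chi^2$:
$$P_0(\phi=1)+P_\mu(\phi=0)\ge \tfrac{1}{2}\exp(-\KL(P_\mu,P_0)).$$
Combined with $\KL\le\log(1+\chi^2)$, this gives
$$\text{error}\ge \frac{1}{2(1+\chi^2(P_\mu,P_0))}.$$
The goal therefore becomes
$$1+\chi^2(P_\mu,P_0)\le \frac{1}{8\alpha}.$$

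First, we expand the chi-squared divergence of the mixture:
$$1+\chi^2(P_\mu,P_0)=\bbE_{u,v\sim\mu}\Big[\Big(\int \frac{dP_u^A\,dP_v^A}{dP_0}\Big)^n\Big].$$
The key one-sample identity, a standard NGCA fact from \cite{diakonikolas2017statistical}, is the following. Write $\bar U_\rho$ for the Ornstein–Uhlenbeck operator. Then
$$\int \frac{P_u^A P_v^A}{P_0}=1+\langle A/\phi_1-1,\ \bar U_{u^\top v}(A/\phi_1-1)\rangle_{N(0,1)}.$$
Expand $A/\phi_1-1=\sum_{k\ge1}a_kh_k$ in normalized Hermite polynomials. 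The $k=0$ term vanishes since $A$ is a density, and $a_1=0$ by the first-moment condition. The inner product then equals $\sum_{k\ge2}a_k^2(u^\top v)^k$, which is at most $(u^\top v)^2\chi^2(A,N(0,1))$ because $|u^\top v|\le1$. This is where moment matching enters: it upgrades a linear dependence on $u^\top v$ to a quadratic one.

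Next, write $\chi^2_A=\chi^2(A,N(0,1))$ and $\tau=u^\top v$. We get
$$1+\chi^2(P_\mu,P_0)\le \bbE\exp(n\chi^2_A\tau^2)\le \bbE\exp\Big(\frac{d+\log(1/\alpha)}{8}\tau^2\Big),$$
where $\tau$ is distributed as the first coordinate of a uniform point on $\cS^{d-1}$. We will use the sub-Gaussian moment generating function of $\tau^2$, for instance via $\bbE e^{\lambda\tau^2}\le(1-2\lambda/d)^{-1/2}$ when $\lambda<d/2$. Alternatively, one can compare $\tau^2$ with $g_1^2/\|g\|^2$ and use the Beta$(1/2,(d-1)/2)$ density. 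Taking $\lambda=(d+\log(1/\alpha))/8$, the $d/8$ part of $\lambda$ contributes only a constant factor.

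The main obstacle is the $\log(1/\alpha)/8$ part when $\log(1/\alpha)\gg d$, for instance $d=1$, where $\tau=\pm1$ deterministically. Then the bound is about $\alpha^{-1/8}$, which is certainly at most $1/(8\alpha)$ for $\alpha<0.05$. In general the Beta tail gives
$$\bbE e^{\lambda\tau^2}\le C\,e^{d/8}$$
plus the contribution from the region $\tau^2\approx1$, which is at most $e^{\lambda}\le e^{d/8}\alpha^{-1/8}$. This crude bound $e^{\lambda}$ alone is insufficient when $d$ is large, so we will split the expectation on the events $\{\tau^2\le 1/2\}$ and $\{\tau^2>1/2\}$. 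On the second event we use $\bbP(\tau^2>1/2)\le e^{-cd}$ to absorb the $e^{d/8}$ factor; this requires checking that the constant $1/8$ is small enough relative to the spherical tail exponent. On the first event we use the moment generating function bound with $\lambda/2$. We expect the careful verification of these constants to be the only delicate point.
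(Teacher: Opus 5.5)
Your proposal is correct and follows a different route from the paper.

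\textbf{How the two routes differ.} The paper splits the hypothesis into two cases, $\chi^2(A,N(0,1))\le \log(1/\alpha)/(4n)$ and $\chi^2(A,N(0,1))\le d/(4n)$, and argues each separately:
\begin{itemize}
\item In the first case it never touches the mixture. It uses convexity of $\TV$ to reduce to a single direction $u$, then applies Bretagnolle--Huber with $\KL\le\chi^2$ tensorized over the $n$ samples. No moment matching is needed there.
\item In the second case it bounds $\bbE_{P_0}[(dP_\mu/dP_0)^2]$ by an exact binomial/Beta-function series and uses $\TV\le\frac12\sqrt{\chi^2}$.
\end{itemize}
You instead run one chain on the mixture itself: Bretagnolle--Huber, then $\KL\le\log(1+\chi^2)$, then Ingster's identity, then the Ornstein--Uhlenbeck/Hermite bound giving $\tau^2\chi^2_A$. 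This reduces the whole lemma to the single inequality $\bbE e^{\lambda\tau^2}\le e^{L}/8$, with $L=\log(1/\alpha)>\log 20$ and $\lambda=(d+L)/8$. The dichotomy then reappears only at the end. Your trivial bound $\tau^2\le 1$ plays the role of the paper's convexity case, and a sub-Gaussian MGF bound for $\tau^2$ plays the role of its Beta-series case. Your route is more unified, since one quantity governs both regimes. The price is the weaker conversion $1/(2(1+\chi^2))$ when $\chi^2$ is small, which is still enough because only $4\alpha<0.2$ is required.

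\textbf{Closing the last step.} The split on $\{\tau^2\le 1/2\}$ that you sketch is unnecessary. As phrased it is also shaky: the comparison bound evaluated at $\lambda/2$ is only available when $\lambda<d$, i.e.\ $L<7d$. Split on $d$ instead.
\begin{itemize}
\item If $d\le 7L-8\log 8$, then directly $\bbE e^{\lambda\tau^2}\le e^{\lambda}\le e^{L}/8$.
\item Otherwise, $L>\log 20$ forces $d\ge 5$ and $L/d<(1+8\log 8/d)/7<0.62$. Hence $2\lambda/d=\frac14+\frac{L}{4d}<0.41$, and
\[
\bbE e^{\lambda\tau^2}\le\Big(1-\frac{2\lambda}{d}\Big)^{-1/2}<1.31<\frac{20}{8}<\frac{e^{L}}{8}.
\]
\end{itemize}
The comparison bound itself follows termwise in the exponential series. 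With $g\sim N(0,1)$,
\[
\bbE\tau^{2k}=\prod_{j<k}\frac{j+1/2}{j+d/2}\le\prod_{j<k}\frac{j+1/2}{d/2}=\bbE\big(g^2/d\big)^k,
\]
so $\bbE e^{\lambda\tau^2}\le\bbE e^{\lambda g^2/d}=(1-2\lambda/d)^{-1/2}$ for $\lambda<d/2$. With this, your argument is complete.
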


In order to apply Lemma \ref{lm:NGCA-stat} to establish (\ref{ineq:testing-lower-bound-claim}), we specify a contamination distribution $Q$ in the alternative of (\ref{eq:test-def}) to be non-Gaussian only in the direction of $u$. This implies that the alternative class of (\ref{eq:NGCA-final}) is a subset of that of (\ref{eq:test-def}). In particular, for a $u\in\cS^{d-1}$, we consider $Q_u=P_u^q$ for some $q$ to be specified. With this choice, the alternative is clearly an instance of the high-dimensional hidden direction distribution (Definition \ref{def:high-dim-distribution}). That is,
\begin{equation}
(1-\epsilon_{\max})N(ru,I_d) + \epsilon_{\max}Q_u = P_{u}^A,\label{ineq:NGCA-stat}
\end{equation}
where $A(x) = (1 - \epsilon_{\max}  )\phi(x - r) + \epsilon_{\max} q(x)$. The one-dimensional density $q$ is defined by 
	\begin{equation} \label{def:A-NGCA-stat}
		\begin{split}
			q(x)  = \frac{\phi(x) - (1 - \epsilon_{\max})  \phi(x - r) \frac{\indi( |x - r| \leq t )}{ \bbP(|N(0,1)| \leq t) } }{\epsilon_{\max} }.
		\end{split}
	\end{equation}
One can check that (\ref{def:A-NGCA-stat}) is a proper density function as long as $r\lesssim t^{-1}$, and it is easy to verify the first moment condition $\int xA(x) dx = 0$. Moreover, it can be shown that $\chi^2(A, N(0,1)) \lesssim \exp(-t^2)$ through a Gaussian tail analysis, which leads us to set $t \asymp \sqrt{\log(n/(d + \log(1/\alpha)))}$ in order to satisfy the conditions of Lemma \ref{lm:NGCA-stat}. It turns out this choice of $t$ satisfies the requirement $r\lesssim t^{-1}$ for the $r$ specified in Proposition \ref{prop:test-lower-bound-gaussian}, and therefore the desired conclusion (\ref{ineq:testing-lower-bound-claim}) is established.

\section{Computational Lower Bounds}\label{sec:GSQlow}

Despite its statistical optimality, the adaptive confidence set (\ref{eq:adaptive-CI}) requires exponential time to compute. 
Before presenting our algorithm in Section \ref{sec:quasi-poly-algorithm}, we establish a computational lower bound showing that achieving the statistically optimal rate requires super-polynomial computational complexity, at least within the Statistical Query framework.

\subsection{Statistical Query Confidence Sets} \label{sec:SQ-model}

The Statistical Query (SQ) model \citep{kearns1998efficient} is a common framework for providing rigorous evidence of computational barriers in high-dimensional statistical inference problems \citep{feldman2017statistical,diakonikolas2017statistical,diakonikolas2023sq,diakonikolas2026algorithms}. In the SQ framework, algorithms do not have direct access to i.i.d.\ samples from a distribution $P$. Instead, they interact with an SQ oracle, which returns approximate expectations of queried functions under the distribution $P$.

To motivate the framework, consider a bounded query function $q$. Given i.i.d.\ samples $\{X_i\}_{i=1}^n$ from $P$, the empirical average satisfies
\[
\left|
\frac{1}{n}\sum_{i=1}^n q(X_i)
-
\mathbb{E}_{X\sim P}[q(X)]
\right|
=
O_{\mathbb{P}}\left(\frac{1}{\sqrt{n}}\right).
\]
Thus, the empirical average can be viewed as an approximation to the population quantity $\mathbb{E}_{X\sim P}[q(X)]$. More generally, an SQ oracle responds to a query $q$ with a value that is sufficiently close to $\mathbb{E}_{X\sim P}[q(X)]$. One example of such an oracle is the above empirical average computed from i.i.d.~samples, though the SQ framework is more general.

An SQ algorithm is only allowed to interact with the distribution through adaptive 
queries to the oracle. The number of oracle queries required by an algorithm serves 
as a surrogate for its computational complexity. The SQ framework naturally 
captures many commonly used procedures in statistics and optimization, including 
including many first-order methods for stochastic convex optimization; see, e.g.,~\cite{FeldmanGV21} for a discussion.

We consider the following standard SQ oracle.

\begin{Definition}[STAT Oracle]\label{def:vstat}
	Let $P$ be a distribution over a domain $\cX$. A statistical query is a bounded function $q: \cX \to [-1, 1]$. We define 
	$\mathtt{STAT}_{P, \tau}$ to be an oracle that, given any statistical query $q$, outputs a value $v = \mathtt{STAT}_{P, \tau}(q)$ satisfying $|v - \bbE_{X \sim P}[q(X)]| \leq \tau$, where $\tau \in (0,1)$ is the tolerance parameter of the SQ oracle.
\end{Definition}

This definition is abstract and does not explicitly involve samples generated from $P$. In a statistical setting where i.i.d.~samples are available, a $\mathtt{STAT}_{P,\tau}$ oracle can be simulated by the empirical average of $q(X_1),\dots,q(X_n)$ using $n=\Theta(1/\tau^2)$ samples from $P$. Thus, if an SQ algorithm requires access to $\mathtt{STAT}_{P,\tau}$ with smaller tolerance $\tau$, this corresponds to higher sample complexity. More generally, one may interpret the sample complexity required to simulate $\mathtt{STAT}_{P,\tau}$ as scaling polynomially in $1/\tau$.
Formally, let $\mathcal{F}$ denote the collection of all statistical queries, namely all functions bounded in $[-1,1]$. An SQ oracle $\mathtt{STAT}_{P,\tau}$ is a map from $\mathcal{F}$ to $\mathbb{R}$ such that
$
\left|
\mathtt{STAT}_{P,\tau}(q)
-
\mathbb{E}_{X\sim P}[q(X)]
\right|
\le \tau
$
for every $q \in \mathcal{F}$. We define $\mathtt{Oracle}_{P,\tau}$ to be the set of all such oracles:
\[
\mathtt{Oracle}_{P,\tau}
=
\left\{
g:\mathcal{F}\to\mathbb{R}
\;\middle|\;
\forall q\in\mathcal{F},
\;
|g(q)-\mathbb{E}_{X\sim P}[q(X)]|
\le \tau
\right\}.
\]

A statistical query algorithm $\mathcal{A}=\mathcal{A}_{k,\tau}$, parameterized by the number of queries $k$ and tolerance parameter $\tau$, interacts with an unknown oracle $\mathtt{STAT}_{P,\tau}\in\mathtt{Oracle}_{P,\tau}$ over $k$ rounds. At round $i\in[k]$, the algorithm adaptively chooses a query $q_i\in\mathcal{F}$ based on the previous query-response pairs $(q_j,v_j)_{j=1}^{i-1}$, and receives the response
$
v_i=\mathtt{STAT}_{P,\tau}(q_i).
$
After $k$ rounds, the algorithm outputs a value $\mathcal{A}(\mathtt{STAT}_{P,\tau})$ based on the full interaction history $(q_j,v_j)_{j=1}^k$. We use $\mathrm{SQ}(k,\tau)$ to denote the collection of all SQ algorithms that make at most $k$ oracle queries with tolerance parameter $\tau$. The restriction to SQ algorithms rules out procedures that rely on fine-grained access to individual samples rather than aggregate statistics.

Depending on the statistical task, the output $\mathcal{A}(\mathtt{STAT}_{P,\tau})$ may take values in $\mathbb{R}^d$ for estimation problems or in $\{0,1\}$ for testing problems. For confidence set construction, the output is itself a set, which we denote by $\widehat{\CS}(\mathtt{STAT}_{P,\tau})$ and refer to as an SQ confidence set.

We define $\cP_{\epsilon,\theta}$ to be the Huber contamination neighborhood around $N(\theta,I_d)$. That is,
$$\cP_{\epsilon,\theta}=\{(1-\epsilon)N(\theta,I_d)+\epsilon Q: Q\}.$$

\begin{Definition}[SQ Confidence Set]\label{def:SQCS}
Given some $\epsilon\in[0,\epsilon_{\max}]$, we say that an SQ algorithm $\wh{\CS}$ is an adaptive robust confidence set with $k$ queries, tolerance $\tau$, and diameter $r$ at contamination level $\epsilon$, if $\wh{\CS}\in \mathrm{SQ}(k,\tau)$ and
\begin{eqnarray*}
\inf_{\theta}\inf_{P \in \cP_{\epsilon_{\max},\theta} } \,  \,\inf_{\mathtt{STAT}_{P,\tau} \in \mathtt{Oracle}_{P,\tau} } \bbP\left( \theta \in \widehat{\CS}(\mathtt{STAT}_{P,\tau})\right) \geq 0.95, \\
\inf_{\theta}\inf_{P \in \cP_{\epsilon,\theta} } \inf_{\mathtt{STAT}_{P,\tau} \in \mathtt{Oracle}_{P,\tau}} \bbP \left( |\widehat{\CS}(\mathtt{STAT}_{P,\tau})| < r \right) \geq 0.95,
\end{eqnarray*}
where the probability is taken over the randomness of the algorithm.
\end{Definition}

\subsection{SQ Lower Bounds for Confidence Set Diameter} \label{sec:SQ-lower-bound}

We now state the main computational lower bound for adaptive robust confidence sets in the SQ framework.
\begin{Theorem} \label{th:SQ-lower-bound}
Let $\epsilon\in[0,0.05]$. There exist universal constants $c,c_1$ such that for any $0<m\leq d^{c}$, any SQ confidence set $\wh{\CS}\in \mathrm{SQ}(k,\tau)$ (Definition \ref{def:SQCS}) with diameter $r$ at contamination level $\epsilon$ satisfying
$$r\leq \frac{c_1}{\sqrt{m}}$$
must require either
	\begin{equation} \label{ineq:SQ-option}
		\begin{split}
			&\text{(i) number of queries  }k \geq 2^{d^{\Omega(1)}}, \text{ or }\\
			&\text{(ii) tolerance of the SQ oracle } \tau =d^{- \Omega(m)}. 
		\end{split}
	\end{equation}
\end{Theorem}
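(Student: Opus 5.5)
We reduce the construction of an SQ confidence set to an SQ testing problem and then invoke the standard SQ hardness of NGCA under $m$-moment matching \citep{diakonikolas2017statistical}. This mirrors the information-theoretic argument (Lemma \ref{prop:CI-imply-test-new} and Proposition \ref{prop:test-lower-bound-gaussian}), with first-moment matching replaced by matching of the first $m$ moments.

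\textbf{Step 1: from confidence sets to tests.} Suppose $\wh{\CS}\in\mathrm{SQ}(k,\tau)$ has diameter $r\le c_1/\sqrt m$ at level $\epsilon$. Since $\cP_{0,0}\subseteq\cP_{\epsilon,0}$, this diameter guarantee holds in particular under $N(0,I_d)$. Define the SQ test $\phi=\sup_{u}\indi(ru\in\wh{\CS})$, which is computed from the same query transcript, so it is also in $\mathrm{SQ}(k,\tau)$.
\begin{itemize}
\item Under $N(0,I_d)$, with probability $0.95$ the set contains $0$ and has diameter less than $r$, so it contains no point $ru$. Hence $\phi=0$.
\item Under any alternative $(1-0.05)N(ru,I_d)+0.05\,Q$, coverage gives $ru\in\wh{\CS}$ with probability $0.95$. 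Hence $\phi=1$.
\end{itemize}
This holds for every admissible oracle. The result is an SQ test that distinguishes $N(0,I_d)$ from the composite alternative with constant advantage.

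\textbf{Step 2: moment-matching hidden-direction alternative.} We take $Q_u=P_u^q$, so that the alternative equals $P_u^A$ with $A=0.95\,\phi(\cdot-r)+0.05\,q$. We need a univariate density $q$ such that:
\begin{itemize}
\item $A$ matches the first $m$ moments of $N(0,1)$;
\item $\chi^2(A,N(0,1))=O(1)$, or at least is bounded by a constant raised to a polynomial power in $m$;
\item $q\ge 0$.
\end{itemize}
Matching moments requires $0.05\int x^j q = \bbE Z^j-0.95\,\bbE(Z+r)^j$ for $j\le m$. The plan is to write $q=(\phi-0.95\,\phi(\cdot-r)g)/0.05$, in the spirit of (\ref{def:A-NGCA-stat}), where $g$ is a correction supported on $|x|\lesssim\sqrt m$. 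Concretely, $\phi-0.95\,\phi(\cdot -r)$ is corrected by a polynomial of degree at most $m$ times $\phi$ (a Hermite expansion) so that moments $1,\dots,m$ vanish. The Hermite coefficients of $\phi(x-r)/\phi(x)$ are $r^j/\sqrt{j!}$. With $r\le c_1/\sqrt m$, the correction $\sum_{j\le m}(r^j/\sqrt{j!})H_j(x)$ is bounded by a small constant times $\phi$ on the bulk $|x|\le C\sqrt m$, since $|H_j(x)|\lesssim e^{x^2/4}$ there, and $r x\lesssim c_1$. This keeps $q$ nonnegative after truncating the shift to $|x|\lesssim \sqrt m$ and moving tail mass into $q$.

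This nonnegativity step is the main obstacle. Positivity must hold in the tails as well, so we use the truncation trick: multiply $\phi(x-r)$ by an indicator of $|x-r|\le t$ with $t\asymp \sqrt m$, as in (\ref{def:A-NGCA-stat}), which requires exactly $r\lesssim 1/t\asymp 1/\sqrt m$. We then fix the moments with a bounded polynomial perturbation of $\phi$ on $[-t,t]$. The bounds come from Gaussian quadrature / moment-matching lemmas for truncated Gaussians, e.g. Lemma 8.18 style results in \cite{diakonikolas2023algorithmic}: a distribution supported near $[-t,t]$ with $t\gtrsim\sqrt m$ can match $m$ Gaussian moments with $\chi^2$ bounded.

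\textbf{Step 3: apply the NGCA SQ lower bound.} The generic result \citep{diakonikolas2017statistical} says the following. If $A$ matches $m$ moments with $\chi^2(A,N(0,1))<\infty$, then any SQ algorithm distinguishing $N(0,I_d)$ from $\{P_u^A\}$ needs either $2^{d^{\Omega(1)}}$ queries or tolerance at most $d^{-\Omega(m)}\sqrt{\chi^2(A,N(0,1))}$, for $m\le d^{c}$. The proof uses nearly orthogonal directions with $|u^\top v|\le d^{-c'}$, which makes the pairwise correlations at most $|u^\top v|^{m+1}\chi^2$. Since $\chi^2(A,N(0,1))$ is at most $e^{O(m)}$ (or constant), it is absorbed into $d^{-\Omega(m)}$ once $m\le d^c$. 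Combined with Step 1, any $\wh{\CS}$ of diameter at most $c_1/\sqrt m$ must satisfy (i) or (ii) of (\ref{ineq:SQ-option}).
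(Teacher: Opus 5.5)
Your outline follows the paper's architecture. Step 1 is Lemma \ref{lm:SQ-CS-testing}, with one small slip: the joint event ``$0\in\wh{\CS}$ and $|\wh{\CS}|<r$'' has probability at least $0.9$, not $0.95$, which is still exactly what Definition \ref{def:SQtest} needs. Taking $Q_u=P_u^q$ with a truncated shift plus a degree-$m$ polynomial correction is the construction (\ref{eq:sq-con-A})--(\ref{eq:A-NGCA-SQ}) carried out in Lemma \ref{lm:population-existence-analysis}.

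The real difference is the hardness input. The paper uses Theorem 1.5 of \cite{diakonikolas2023sq} (Lemma \ref{lm:SQ-NGCA}), which needs only that $A$ match $m$ moments of $N(0,1)$, with no $\chi^2$ condition. That lemma is also the source of the restriction $m\le d^c$, and with it only nonnegativity and moment matching have to be checked. You instead use the classical bound of \cite{diakonikolas2017statistical}, whose tolerance threshold carries a factor $\sqrt{\chi^2(A,N(0,1))}$. So you additionally owe a $\chi^2$ bound, which you only assert. It does hold with $O(1)$:
\begin{itemize}
\item outside the truncation window, $A\le\phi+0.95\,\phi(\cdot-r)$ and $\int\phi(x-r)^2/\phi(x)\,dx=e^{r^2}$;
\item inside the window, $A\le\frac{3}{2}\phi$ once the polynomial is controlled.
\end{itemize}
(Absorbing an $e^{O(m)}$ factor into $d^{-\Omega(m)}$ would need $d$ larger than a constant, not $m\le d^c$; this is moot here.)

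Two details of Step 2 need fixing.

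\emph{The Hermite justification is invalid as written.} Cram\'er's bound $|h_j(x)|\lesssim e^{x^2/4}$ only controls the correction by roughly $r\,e^{x^2/4}\phi(x)$. That is not small relative to $\phi$ at $|x|\asymp\sqrt m$. Since you abandon that route anyway, drop it.

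\emph{The polynomial correction cannot be spread over all of $[-t,t]$.} With support $[-t,t]$, Lemma \ref{lm:polynomial-matching-bound} bounds it only by about $m^2(2e)^m e^{-t^2/2}/t^2$. That is an $e^{\Theta(m)}$ factor above the slack $\asymp\phi(t)$ available near $|x|=t$, so nonnegativity cannot be certified there. The paper instead supports $p$ on $[-t/4,t/4]$:
\begin{itemize}
\item there the slack is $\gtrsim\phi(t/4)\propto e^{-t^2/32}$, while (\ref{ineq:density-bound}) gives $\sup|p|\lesssim m^2e^{-t^2/4+m\log(8e)}$, which is dominated once $t\ge 8\sqrt m$;
\item nonnegativity of $\phi-0.95\,\phi(\cdot-r)$ on $t/4\le|x|\le t$ needs $t\le\epsilon_{\max}/r$.
\end{itemize}
Together these two constraints are exactly what yield $r\asymp 1/\sqrt m$.

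With the support shrunk this way, your variant also goes through. That variant truncates $\phi(\cdot-r)$ at $|x-r|\le t$ and renormalizes, as in (\ref{def:A-NGCA-stat}). Its extra moment discrepancy $\bigl(1/\bbP(|N(0,1)|\le t)-1\bigr)\int_{|x-r|\le t}x^k\phi(x-r)\,dx$ is $O\bigl(e^{-t^2/2}(2\sqrt{k})^{k}\bigr)$. This is harmless after dividing by $b^{k+1}$ with $b=t/4\ge 2\sqrt m$.
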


Theorem \ref{th:SQ-lower-bound} shows that any adaptive confidence set satisfying the coverage property and achieving diameter $O(m^{-1/2})$ must either use $2^{d^{\Omega(1)}}$ oracle queries or require tolerance $\tau = d^{-\Omega(m)}$. As discussed in Section \ref{sec:SQ-model}, simulating an SQ oracle with such a tolerance requires $d^{\Omega(m)}$ samples. Since processing $d^{\Omega(m)}$ samples already incurs computational cost at least polynomial in $d^{\Omega(m)}$, the theorem rules out a broad class of polynomial-time algorithms for constructing a shrinking confidence set, as shrinking diameter requires $m\to\infty$.
In particular, achieving the statistically optimal diameter $\frac{1}{\sqrt{\log(1/\epsilon)}}$ requires $m \gtrsim \log(1/\epsilon)$. Consequently, sample and computational complexity at least
$d^{\Omega\!\left(\log(1/\epsilon)\right)}$
are necessary within the SQ framework. This stands in sharp contrast to the information-theoretically optimal construction in Section \ref{sec:optimal-ARCS-Gaussian}, which achieves the same diameter using only
$n \asymp d \cdot \mathrm{poly}\!\bigl(1/\epsilon\bigr)$
samples, albeit with exponential computational complexity.
In summary, an algorithm with sample and computational complexity $d^{O(m)}$ can achieve diameter at best of order $O(m^{-1/2})$.

Similar to the proof of the information-theoretic lower bound in Theorem \ref{th:lower-bound-Gaussian}, the proof of Theorem \ref{th:SQ-lower-bound} also relies on the hardness of a hypothesis testing problem. In particular, the testing problem (\ref{eq:test-def}) continues to serve as the hard instance. The main difference is that, within the SQ framework, the algorithm no longer has direct access to i.i.d.\ samples. A formal definition of SQ testing is given below.

\begin{Definition}[SQ Testing]\label{def:SQtest}
Let $P_0$ be a distribution and let $\mathcal{P}$ be a collection of distributions over a common support such that $P_0\notin \mathcal{P}$. We say that an SQ algorithm $\mathcal{A}$ solves the testing problem
\[
H_0: P=P_0
\qquad\text{versus}\qquad
H_1: P\in\mathcal{P}
\]
with $k$ queries and tolerance $\tau$ if $\mathcal{A}\in\mathrm{SQ}(k,\tau)$ and
\[
\sup_{P\in\{P_0\}\cup\mathcal{P}}
\sup_{\mathtt{STAT}_{P,\tau}\in\mathtt{Oracle}_{P,\tau}}
\mathbb{P}
\left(
\mathcal{A}(\mathtt{STAT}_{P,\tau})
\neq
\indi\{P\neq P_0\}
\right)
\le 0.1,
\]
where the probability is taken over the randomness of the algorithm.
\end{Definition}

Analogous to Lemma \ref{prop:CI-imply-test-new}, an SQ adaptive confidence set also induces an SQ test for the SQ version of (\ref{eq:test-def}).

\begin{Lemma}[SQ Confidence Set Implies SQ Testing]
\label{lm:SQ-CS-testing}
For any $\epsilon\in[0,\epsilon_{\max}]$, suppose there exists an SQ confidence set with diameter $r$, $k$ queries, and tolerance $\tau$ in the sense of Definition \ref{def:SQCS}. Then there exists an SQ test that solves the testing problem
\begin{equation}
P_0=N(0,I_d)
\qquad\text{versus}\qquad
\mathcal{P}
=
\left\{
(1-\epsilon_{\max})N(ru,I_d)+\epsilon_{\max}Q
:
Q,\;
u\in\mathcal{S}^{d-1}
\right\} \label{eq:SQ-test-CI}
\end{equation}
in the sense of Definition \ref{def:SQtest}, with the same number of queries $k$ and tolerance $\tau$.
\end{Lemma}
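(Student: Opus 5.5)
We mimic Lemma \ref{prop:CI-imply-test-new} at the level of oracles. Given the SQ confidence set $\wh{\CS}\in\mathrm{SQ}(k,\tau)$, define the SQ test $\mathcal{A}$ that runs $\wh{\CS}$ against whatever oracle $\mathtt{STAT}_{P,\tau}$ it is handed, issues exactly the same (adaptively chosen) queries, and receives the same responses. It therefore uses $k$ queries with tolerance $\tau$. On output it returns
\[
\mathcal{A}(\mathtt{STAT}_{P,\tau})=\sup_{u\in\cS^{d-1}}\indi\left(ru\in\wh{\CS}(\mathtt{STAT}_{P,\tau})\right).
\]
This is a deterministic post-processing of the interaction history, so $\mathcal{A}\in\mathrm{SQ}(k,\tau)$.

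\emph{Null.} Let $P=P_0=N(0,I_d)$ and take any $\mathtt{STAT}_{P_0,\tau}\in\mathtt{Oracle}_{P_0,\tau}$. Since $P_0\in\cP_{\epsilon_{\max},0}$ (take $Q=N(0,I_d)$), coverage gives $0\in\wh{\CS}$ with probability at least $0.95$. Since $P_0\in\cP_{\epsilon,0}$ for every $\epsilon\in[0,\epsilon_{\max}]$, the diameter guarantee gives $|\wh{\CS}|<r$ with probability at least $0.95$. On the intersection of these events, which has probability at least $0.9$ by a union bound, every $ru$ satisfies $\|ru-0\|=r>|\wh{\CS}|$. Hence $ru\notin\wh{\CS}$ for all $u$, and the test outputs $0$. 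Both guarantees in Definition \ref{def:SQCS} are uniform over oracles, so this holds for the adversarial oracle, and the error is at most $0.1$.

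\emph{Alternative.} Let $P=(1-\epsilon_{\max})N(ru,I_d)+\epsilon_{\max}Q\in\cP_{\epsilon_{\max},ru}$. Coverage gives $ru\in\wh{\CS}$ with probability at least $0.95$ uniformly over oracles in $\mathtt{Oracle}_{P,\tau}$. On that event the test outputs $1$, so the error is at most $0.05\le0.1$.

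One bookkeeping point remains: Definition \ref{def:SQtest} requires $P_0\notin\mathcal{P}$. We must make sure $N(0,I_d)$ is not of the form $(1-\epsilon_{\max})N(ru,I_d)+\epsilon_{\max}Q$. If it were, then $\frac{dN(ru,I_d)}{dN(0,I_d)}\le 1/(1-\epsilon_{\max})$ would have to hold everywhere, which fails for $r>0$ because this likelihood ratio is unbounded. No step is genuinely hard. The only care needed is that the null argument uses the diameter guarantee at level $\epsilon$ and coverage at level $\epsilon_{\max}$ simultaneously, with both holding for the same adversarial oracle. This is exactly why Definition \ref{def:SQCS} places the infimum over oracles inside each probability statement.
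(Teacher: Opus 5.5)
Your proposal is correct and matches the paper's proof. The paper defines the same test $\phi(\mathtt{STAT}_{P,\tau})=\sup_{u\in\cS^{d-1}}\indi\left(ru\in\wh{\CS}(\mathtt{STAT}_{P,\tau})\right)$, notes that it interacts with the oracle only through the $k$ queries of tolerance $\tau$, and reuses the argument of Lemma \ref{prop:CI-imply-test-new} to bound the Type-1 error by $0.1$ and the Type-2 error by $0.05$. Your additional check that $P_0\notin\mathcal{P}$ is a detail the paper leaves implicit. It uses the unbounded likelihood ratio, and $r>0$ is automatic because coverage and the diameter bound at $P_0$ together force a nonempty set of diameter below $r$.
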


\subsection{Reduction from NGCA}\label{sec:sq-ngca}

By Lemma \ref{lm:SQ-CS-testing}, it suffices to establish the SQ hardness of (\ref{eq:SQ-test-CI}) in order to prove Theorem \ref{th:SQ-lower-bound}. To this end, we consider a special subclass of alternatives in (\ref{eq:SQ-test-CI}) arising from the non-Gaussian component analysis (NGCA) model (\ref{eq:NGCA-final}). General SQ hardness results for NGCA under moment matching were established in~\cite{diakonikolas2017statistical} and further developed in \cite{diakonikolas2023sq}. Here we will leverage Theorem 1.5 of \cite{diakonikolas2023sq}.

\begin{Lemma}[Theorem 1.5 of \cite{diakonikolas2023sq}] \label{lm:SQ-NGCA}
Consider the testing problem
\begin{equation}
P_0=N(0,I_d)
\qquad\text{versus}\qquad
\mathcal{P}
=
\left\{
P_u^A
:
u\in\mathcal{S}^{d-1}
\right\}, \label{eq:NGCA-SQ}
\end{equation}
and let $m\leq d^{c}$ be a nonnegative integer where $c> 0$ is sufficiently small.
Suppose the distribution $A$ matches the first $m$ moments of $N(0,1)$. Then, every SQ
algorithm that solves the testing problem with $k$ queries and tolerance $\tau$ in the sense of Definition \ref{def:SQtest} must satisfy either
	\begin{equation} \label{ineq:SQ-option-NGCA}
		\begin{split}
			&\text{(i)  number of queries }k \geq 2^{d^{\Omega(1)}}, \text{ or }\\
			&\text{(ii)  tolerance of the SQ oracle  } \tau =d^{-\Omega(m)}. 
		\end{split}
	\end{equation} 
\end{Lemma}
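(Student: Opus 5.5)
This result is imported from \cite{diakonikolas2023sq}, so within the paper we take it as given. If I were to reprove it, I would use the standard SQ-dimension argument for NGCA, strengthened so that it requires no assumption on $\chi^2(A,N(0,1))$. Note that Lemma \ref{lm:SQ-NGCA}, unlike Lemma \ref{lm:NGCA-stat}, assumes only moment matching.

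\textbf{Reduction to a per-query concentration statement.} It suffices to prove the following: for every bounded query $f:\mathbb{R}^d\to[-1,1]$ and $u$ uniform on $\cS^{d-1}$,
\begin{equation*}
\bbP_u\left( \left|\bbE_{P_u^A} f - \bbE_{N(0,I_d)} f\right| > \tau \right) \le 2^{-d^{\Omega(1)}} \quad \text{for } \tau = d^{-\Omega(m)}.
\end{equation*}
Given this, let the adversarial oracle always answer $\bbE_{N(0,I_d)} q_i$. The algorithm then follows the same query path as under the null. A union bound over the $k$ queries along that path, adaptively chosen but determined by the null answers and the internal randomness, yields some $u$ for which every answer is also a valid $\tau$-accurate answer for $P_u^A$. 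Consequently the algorithm errs with probability at least $1/2$ under either $P_0$ or $P_u^A$, unless $k\ge 2^{d^{\Omega(1)}}$ or $\tau\le d^{-\Omega(m)}$.

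\textbf{Fourier-analytic step.} Write $\bbE_{P_u^A} f=\bbE_{y\sim A}[f_u(y)]$, where $f_u(y)=\bbE_{Z}[f(yu+Z)]$ and $Z$ is standard Gaussian on $u^\perp$. Expand $f=\sum_k f^{(k)}$ in multivariate Hermite polynomials. Then $f_u(y)=\sum_k \langle f^{(k)},u^{\otimes k}\rangle h_k(y)$. Because $A$ matches the first $m$ moments of $N(0,1)$, the terms with $k\le m$ give identical contributions under $A$ and $N(0,1)$. Hence the difference equals $\bbE_A g_u-\bbE_{N(0,1)} g_u$, where $g_u$ collects the degrees $k>m$. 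For random $u$, $\bbE_u\langle f^{(k)},u^{\otimes k}\rangle^2\lesssim \|f^{(k)}\|^2\binom{d+k-1}{k}^{-1}\le d^{-\Omega(k)}$, since $\sum_k\|f^{(k)}\|^2\le 1$. This controls $g_u$ in $L^2(N(0,1))$ by $d^{-\Omega(m)}$ on average over $u$. To obtain tails of size $2^{-d^{\Omega(1)}}$ rather than Markov-type bounds, I would apply concentration of measure on the sphere to the function $u\mapsto \bbE_{P_u^A}f$. Alternatively, following \cite{diakonikolas2017statistical}, one can pack $2^{d^{\Omega(1)}}$ directions with pairwise inner products $\le d^{-1/2+c}$ and use pairwise correlations decaying like $|u^\top v|^{m+1}$.

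\textbf{Main obstacle.} The difficulty is that $\bbE_A g_u$ is not controlled by the $L^2(N(0,1))$ norm of $g_u$ when $\chi^2(A,N(0,1))$ is large or infinite, which is the case for our $A$ with point-mass-like or heavy pieces. My first attempt would be to replace $f$ by $T_\rho f$, using Ornstein--Uhlenbeck smoothing with $\rho=1-d^{-c'}$. This damps degree-$k$ components by $\rho^k$ and makes $f_u$ a smooth, essentially low-degree-dominated function of $y$, with pointwise bounds obtained via hypercontractivity. One must then show that $|\bbE_{P}f-\bbE_P T_\rho f|$ is small for both $P_0$ and $P_u^A$, which follows because $T_\rho$ acts only on the Gaussian part in the orthogonal directions up to a small error. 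In addition, I would truncate $y$ to $|y|\le d^{c}$, using that $f$ is bounded so that the tail mass of $A$ contributes at most its probability. Balancing the smoothing error, the truncation error, and the $d^{-\Omega(m)}$ high-degree bound under $m\le d^c$ is where the careful work lies, and this is the step I expect to be hardest.
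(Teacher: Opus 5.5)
Deferring to \cite{diakonikolas2023sq} is exactly what the paper does: it gives no proof of this lemma and simply invokes Theorem 1.5 there. As a proof within the paper, your proposal is therefore fine.

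Your optional reproof, however, does not close for general $A$, and the gap is at the step you flag. Neither of your proposed sources of exponential tails in $u$ is available.
\begin{itemize}
\item The map $u\mapsto\bbE_{P_u^A}f$ need not be continuous. If $A$ has an atom at $a$, the query $f(x)=\indi(x^\top u_0=a)$ gives $\bbE_{P_u^A}f=A(\{a\})$ at $u=u_0$ and $0$ for $u\neq\pm u_0$. So L\'evy concentration cannot be applied directly.
\item The packing route rests on Lemma \ref{lm:NGCA}, whose bound is vacuous when $\chi^2(A,N(0,1))=\infty$.
\end{itemize}
The smoothing step is circular. We have $\bbE_{P_u^A}T_\rho f=\bbE_{P_u^{A_\rho}}f$, where $A_\rho$ is the law of $\rho y+\sqrt{1-\rho^2}\,g$. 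So for fixed $u$ the discrepancy $|\bbE_{P_u^A}f-\bbE_{P_u^A}T_\rho f|$ can be as large as $2\TV(A,A_\rho)$, which equals $2$ for discrete $A$. A bound valid for most $u$ is just the lemma itself, applied to the bounded query $(f-T_\rho f)/2$ with Gaussian mean zero. That query's Hermite components of degree $k\gg(1-\rho)^{-1}$, which are the problematic ones, are essentially those of $f$.

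Smoothing and truncation also cannot be balanced. For $\rho$ near $1$, Mehler's formula gives only pointwise bounds of order $(1-\rho)^{-1/4}e^{y^2/4}$ times the $L^2(N(0,1))$ norm. This forces truncation at $|y|\lesssim\sqrt{m\log d}$. At that level, matching $m$ moments only yields $\bbP_A(|y|>T)\le(m-1)!!/T^m=(\log d)^{-\Omega(m)}$, not $d^{-\Omega(m)}$. A genuinely different idea is needed here; supplying it is the content of \cite{diakonikolas2023sq}, and your sketch does not.

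A smaller error: the bound $\bbE_u\langle f^{(k)},u^{\otimes k}\rangle^2\lesssim\|f^{(k)}\|^2\binom{d+k-1}{k}^{-1}$ is false. Rotation-invariant components decay only like $d^{-k/2}$. For example, $f^{(2)}(x)=(\|x\|^2-d)/\sqrt{2d}$ satisfies $\bbE_Z f^{(2)}(yu+Z)=h_2(y)/\sqrt d$ for every $u$, so its coefficient is $d^{-1/2}$ rather than $O(1/d)$. Your $d^{-\Omega(m)}$ conclusion survives this correction.

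For the paper itself, the hard case never arises, and your premise that ``our $A$'' has point-mass-like or heavy pieces is wrong. The proof of Theorem \ref{th:SQ-lower-bound} feeds this lemma the $A$ produced by Lemma \ref{lm:population-existence-analysis}. That $A$ has the following form.
\begin{itemize}
\item On $|x|\le b$, $A=\phi+\delta p$ with $|p|\le\phi(b)/2$.
\item On $b<|x|\le t$, $A=\phi$.
\item On $|x|>t$, $A=(1-\delta)\phi(x-r)$.
\end{itemize}
Hence $A\le(1+\delta/2)\phi$ on $|x|\le t$. Using $\phi(x-r)^2/\phi(x)=e^{r^2}\phi(x-2r)$ on the tails gives $\chi^2(A,N(0,1))\le\delta+\delta^2/4+e^{r^2}\bbP(|N(2r,1)|>t)=O(1)$.

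So the classical argument of \cite{diakonikolas2017statistical} suffices: this is the ``alternatively'' route you dismissed. Packing $2^{d^{\Omega(1)}}$ directions with $|u^\top v|\le d^{-1/2+c}$ and applying Lemma \ref{lm:NGCA} gives either $k\ge 2^{d^{\Omega(1)}}$ or $\tau\le d^{-\Omega(m)}\sqrt{\chi^2(A,N(0,1))}=d^{-\Omega(m)}$. That covers the instance the paper actually needs.
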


We now construct a distribution $Q_u$ such that
\begin{equation}
(1-\epsilon_{\max})N(ru,I_d)+\epsilon_{\max}Q_u=P_u^A,
\label{eq:SQ-con}
\end{equation}
for some distribution $A$ whose first $m$ moments match those of $N(0,1)$. This implies that the alternative in (\ref{eq:NGCA-SQ}) is a subset of the alternative in (\ref{eq:SQ-test-CI}), so that the SQ hardness of (\ref{eq:NGCA-SQ}) directly yields Theorem \ref{th:SQ-lower-bound}.
Define $Q_u=P_u^q$, so that (\ref{eq:SQ-con}) holds with
\begin{equation}
A(x)
=
(1-\epsilon_{\max})\phi(x-r)+\epsilon_{\max}q(x).\label{eq:sq-con-A}
\end{equation}
The one-dimensional density $q$ is specified by
\begin{equation} \label{eq:A-NGCA-SQ}
	q(x) = \frac{\phi(x) - (1 - \epsilon_{\max}) \phi(x-r) }{\epsilon_{\max}} \indi(|x| \leq t) + p(x) \indi(|x| \leq t/4),
\end{equation}
where $t$ is some tuning parameter, and $p(x)$ is some polynomial of degree $m$. The construction (\ref{eq:A-NGCA-SQ}) is more flexible than (\ref{def:A-NGCA-stat}) that is used to establish the information-theoretic lower bound. While (\ref{def:A-NGCA-stat}) only matches the first moment of $N(0,1)$, here we will choose the polynomial $p(x)$ to match the first $m$ moments. The choices of $t$ and $p(x)$ need to satisfy
\begin{align}
\label{eq:pcon1} (i)\quad\, & \phi(x) - (1-\epsilon_{\max})\phi(x-r) \geq 0, \qquad\qquad\qquad\text{ for }x\in [-t,-t/4]\cup[t/4,t], \\
\label{eq:pcon2} (ii)\quad\, & \phi(x) - (1-\epsilon_{\max})\phi(x-r) + \epsilon_{\max}p(x) \geq 0, \quad \text{ for }x\in[-t/4,t/4], \\
\nonumber (iii)\quad\, & \int_{-t/4}^{t/4}x^kp(x)dx = \frac{1}{\epsilon_{\max}}\int_{|x|>t}x^k\phi(x)dx - \frac{1-\epsilon_{\max}}{\epsilon_{\max}}\int_{|x|>t}x^k\phi(x-r)dx, \\
\label{eq:pcon3}& \qquad\qquad\qquad\qquad\qquad\qquad\qquad\qquad\qquad\qquad \text{for }k=0,1,\cdots,m.
\end{align}
The conditions (\ref{eq:pcon1}) and (\ref{eq:pcon2}) ensure that the formula (\ref{eq:A-NGCA-SQ}) is non-negative. Condition (\ref{eq:pcon3}) with $k=0$ is equivalent to $\int q(x)dx=1$, which implies that (\ref{eq:A-NGCA-SQ}) is a valid density function. The moment matching condition is also encoded in (\ref{eq:pcon3}) with $k=1,\cdots,m$.

Similar to the analysis of (\ref{def:A-NGCA-stat}), one can verify that (\ref{eq:pcon1}) holds whenever $t\lesssim r^{-1}$. Moreover, building on  \cite{diakonikolas2023algorithmic} (see Lemma 8.18 and Exercise 8.3), we can show that there exists a degree-$m$ polynomial $p(x)$ satisfying (\ref{eq:pcon2}) and (\ref{eq:pcon3}) provided that $t\gtrsim \sqrt{m}$. Therefore, the constraints (\ref{eq:pcon1})--(\ref{eq:pcon3}) can be satisfied simultaneously by choosing $t\asymp \sqrt{m}$ and $r\asymp m^{-1/2}$.
Applying Lemma \ref{lm:SQ-NGCA} to the resulting NGCA instance, together with Lemma \ref{lm:SQ-CS-testing}, yields the conclusion of Theorem \ref{th:SQ-lower-bound}.

\section{A Moment-Matching Algorithm} \label{sec:quasi-poly-algorithm}
Theorem \ref{th:SQ-lower-bound} shows that any adaptive confidence set with diameter $O(m^{-1/2})$ must either incur computational complexity $2^{d^{\Omega(1)}}$, or require sample complexity of order $d^{\Omega(m)}$. While the information-theoretically optimal construction (\ref{eq:adaptive-CI}) falls into the first category, the goal of this section is to construct an algorithm that matches the second possibility, using $d^{O(m)}$ samples and $d^{O(m)}$ computational complexity. The parameter $m$ controls the accuracy–complexity tradeoff. For fixed $m$, the algorithm uses polynomially many samples and polynomial time in $d$. For $m = \log(1/\epsilon)$, this gives a quasi-polynomial time algorithm with an optimal diameter guarantee.

\subsection{Population-Level Analysis}\label{sec:pop}
To motivate the algorithmic development, we first consider an idealized setting in which we have direct access to the probability distribution
\[
P:=P_{\epsilon,\theta,Q}
=
(1-\epsilon)N(\theta,I_d)+\epsilon Q,
\]
corresponding to the limit of infinitely many i.i.d.\ samples. We seek to construct confidence sets in the form of $\ell_2$-balls. This requires specifying both a center and a radius. The center can be obtained using any polynomial-time robust estimator of $\theta$. The main challenge is therefore to determine an appropriate radius, which amounts to quantifying the uncertainty of the robust estimator.

Let $\widehat{\theta}$ be a robust estimator of $\theta$. Our goal is to find a radius $r$ such that $\|\widehat{\theta}-\theta\|\le r$, in which case the ball $B(\widehat{\theta},r)$ satisfies the desired coverage property. A natural starting point is the quantity
\begin{equation}
\label{ineq:hat-epsilon}
\widetilde{\epsilon}
=
\inf\left\{
\delta\in[0,1]:
P\ge (1-\delta)N(\widehat{\theta},I_d)
\right\}.
\end{equation}
Recall that the relation $P\ge (1-\delta)N(\widehat{\theta},I_d)$ is equivalent to the existence of a distribution $Q'$ such that
$P=(1-\delta)N(\widehat{\theta},I_d)+\delta Q'$.
Thus, $\widetilde{\epsilon}$ measures how well $P$ can be explained as a contaminated Gaussian centered at $\widehat{\theta}$, and may be viewed as a data-dependent surrogate for the unknown contamination proportion $\epsilon$. Unlike $\epsilon$, however, the quantity $\widetilde{\epsilon}$ is defined entirely in terms of $P$ and $\widehat{\theta}$.

\begin{Lemma} \label{lm:hat-epsilon-guarantee}
	Suppose $P = P_{\epsilon, \theta, Q} $ for some $\epsilon \in [0, 0.05], \theta \in \bbR^d$ and some distribution $Q$. Suppose $\wh{\theta}\in\mathbb{R}^d$ satisfies $\|\wh{\theta} - \theta\| \leq 1$. For the $\wt{\epsilon}$ defined in \eqref{ineq:hat-epsilon}, we have 
    \[ \|\wh{\theta}- \theta\| \leq \frac{C}{\sqrt{\log(1 /\wt{\epsilon} )} } \]
   for some universal constant $C>0$.
\end{Lemma}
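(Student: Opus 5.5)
The plan is to reduce to one dimension by projecting onto the direction of the error, and then use a Gaussian tail comparison. Set $\Delta = \wh{\theta}-\theta$ and assume $\Delta\neq 0$; otherwise there is nothing to prove. Let $u=\Delta/\|\Delta\|$ and $\delta=\|\Delta\|\le 1$. Write $\wt{\epsilon}'$ for any $\delta'$ with $P\ge (1-\delta')N(\wh{\theta},I_d)$. This gives a decomposition $P=(1-\delta')N(\wh\theta,I_d)+\delta' Q'$, and we also have $P=(1-\epsilon)N(\theta,I_d)+\epsilon Q$. Projecting onto $u$ (and recentering at $\theta^\top u$) yields a one-dimensional law $P^u$ satisfying two bounds:
\[
P^u\ \ge\ (1-\delta')N(\delta,1)\qquad\text{and}\qquad P^u\ \le\ (1-\epsilon)N(0,1)+\epsilon Q^u .
\]
The goal is to show that $\delta'$ cannot be too small when $\delta$ is of constant-times-$1/\sqrt{\log(1/\delta')}$ size. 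Taking the infimum over $\delta'$ then gives the claim.

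The key step compares lower-tail probabilities. For any $s>0$, the half-line $(-\infty,-s]$ lies entirely below the true center, so I would bound its mass in two ways. On one hand, $P^u((-\infty,-s])\ge (1-\epsilon)\bar\Phi(s)$, because $\epsilon Q^u\ge0$. On the other hand, the decomposition around $\wh\theta$ gives
\[
P^u((-\infty,-s])=(1-\delta')\bar\Phi(s+\delta)+\delta' Q'^u((-\infty,-s])\le \bar\Phi(s+\delta)+\delta' .
\]
Combining the two,
\[
\delta'\ \ge\ 0.95\,\bar\Phi(s)-\bar\Phi(s+\delta)\qquad\text{for every } s>0 .
\]
Intuitively, outliers can only add mass, so the true Gaussian's far-left tail must be covered by the shifted Gaussian plus a $\delta'$ fraction of junk.

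It remains to choose $s$. The Mills ratio gives $\bar\Phi(s+\delta)/\bar\Phi(s)\le e^{-s\delta}$ for $s,\delta\ge0$; this follows from $\bar\Phi(s+\delta)/\bar\Phi(s)\le \phi(s+\delta)/\phi(s)\cdot$(something) or, more cleanly, from log-concavity of $\bar\Phi$, whose hazard rate is at least $s$. Choose $s=c_0/\delta$ with $c_0$ a large constant such as $e^{-c_0}\le 0.45$. Then
\[
\delta'\ge \bar\Phi(s)\bigl(0.95-e^{-c_0}\bigr)\ge \tfrac12\bar\Phi(c_0/\delta)\ge c\,e^{-C c_0^2/\delta^2},
\]
using $\bar\Phi(x)\ge c\,e^{-x^2}$ for $x\ge0$ (here $\delta\le1$ ensures $s\ge c_0$, so the bound applies in the correct regime). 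Taking logarithms gives $\log(1/\delta')\le C'/\delta^2$, that is, $\delta\le C/\sqrt{\log(1/\delta')}$. Passing to the infimum $\delta'\downarrow\wt\epsilon$ is valid because the bound is monotone, and the infimum in (\ref{ineq:hat-epsilon}) is attained or can be approached.

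The main obstacle is the bookkeeping that makes the constants uniform. We need $\bar\Phi(s)$ to dominate $\bar\Phi(s+\delta)$ by a fixed factor, which is where the hazard-rate bound and the choice $s\asymp1/\delta$ enter. We also need the edge cases handled: $\wt\epsilon$ close to $1$, where the bound is trivial if $C$ is large because $\delta\le1$, and $\wt\epsilon=0$, where the inequality forces $\delta=0$ by letting $c_0\to\infty$. The hypothesis $\|\wh\theta-\theta\|\le1$ is used only to keep $s$ bounded away from $0$ in the tail estimates.
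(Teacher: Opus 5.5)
Your proof is correct and is essentially the paper's argument: both project onto the direction of $\wh{\theta}-\theta$ and compare the mass that $N(\theta,I_d)$ and the shifted $N(\wh{\theta},I_d)$ put on the same half-space. The threshold is of order $1/\|\wh{\theta}-\theta\|\asymp\sqrt{\log(1/\delta')}$, and both arguments use that contamination can only add mass. The differences are in bookkeeping only: the paper obtains your inequality $\delta'\ge 0.95\,\bar{\Phi}(s)-\bar{\Phi}(s+\delta)$ through its structural Lemma (losing only factors $2/3$ and $4/3$), fixes the threshold at $\Phi^{-1}(1-\delta')$ and concludes by contradiction via a density-ratio bound, whereas you write the inequality directly and choose $s=c_0/\delta$ via the hazard-rate bound. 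In your version, $\|\wh{\theta}-\theta\|\le 1$ is used when you absorb the constant $\log(2/c)$ into $C'/\delta^2$, not to keep $s$ away from $0$.
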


According to Lemma \ref{lm:hat-epsilon-guarantee}, the $\ell_2$ ball
$B\left(\widehat{\theta},
\frac{C}{\sqrt{\log(1/\widetilde{\epsilon})}}
\right)$
is an adaptive confidence set satisfying the coverage property. The drawback is that the resulting radius can be much larger than necessary. To illustrate this issue, consider the example $P=(1-\epsilon)N(\theta,I_d)+\epsilon Q$
with $Q=N(\theta,I_d)$. In this case, $P=N(\theta,I_d)$ regardless of the value of $\epsilon$. Then the condition $P\ge (1-\delta)N(\widehat{\theta},I_d)$ in (\ref{ineq:hat-epsilon}) becomes
$N(\theta,I_d)\ge (1-\delta)N(\widehat{\theta},I_d)$.
Unless $\widehat{\theta}=\theta$, this inequality can only hold with $\delta=1$. Therefore, $\widetilde{\epsilon}=1$ by definition, even though the underlying distribution has no contamination. This example shows that $\widetilde{\epsilon}$ is highly sensitive to the error of $\widehat{\theta}$ and may significantly overestimate the effective contamination level.

To overcome the instability of (\ref{ineq:hat-epsilon}), we replace the Gaussian distribution \(N(\widehat{\theta},I_d)\) by a larger class of distributions that agree with it only through low-order moments. Let
\[
\mathcal{P}_{m,d}
=
\left\{
p:\mathbb{R}^d\to\mathbb{R}
|
\deg(p)\le m
\right\}
\]
be the collection of polynomials on \(\mathbb{R}^d\) of degree at most \(m\). For two distributions \(P_1\) and \(P_2\) on \(\mathbb{R}^d\), we write
\[
P_1 \equiv_m P_2
\]
if
\[
\mathbb{E}_{X\sim P_1}[p(X)]
=
\mathbb{E}_{X\sim P_2}[p(X)]
\]
for every \(p\in\mathcal{P}_{m,d}\).
Motivated by the SQ lower bound construction in Section \ref{sec:sq-ngca}, we define
\begin{equation}
\label{ineq:hat-epsilon-relax}
\widetilde{\epsilon}
=
\inf
\left\{
\delta\in[0,1]:
P\ge (1-\delta)D
\text{ for some }
D\equiv_{m_\delta}N(\widehat{\theta},I_d)
\right\},
\end{equation}
where \(m_\delta\) is the largest even integer not exceeding \(\log(1/\delta)/9\).
Compared with (\ref{ineq:hat-epsilon}), the definition (\ref{ineq:hat-epsilon-relax}) only requires the uncontaminated component to match finitely many moments of \(N(\widehat{\theta},I_d)\), with the number of matched moments increasing as \(\delta\) decreases. The following theorem shows that this relaxation retains the coverage guarantee while also providing an upper bound on \(\widetilde{\epsilon}\).

\begin{Theorem}
\label{th:hat-epsilon-guarantee}
Suppose $P=P_{\epsilon,\theta,Q}$
for some \(\epsilon\in[0,0.05]\), \(\theta\in\mathbb{R}^d\), and some distribution \(Q\). Assume that \(\widehat{\theta}\in\mathbb{R}^d\) satisfies
$\|\widehat{\theta}-\theta\|\le 1$.
Then the quantity \(\widetilde{\epsilon}\) defined in (\ref{ineq:hat-epsilon-relax}) satisfies
\[
\|\widehat{\theta}-\theta\|
\le
\frac{C}{\sqrt{\log(1/\widetilde{\epsilon})}}
\]
and
\[
\widetilde{\epsilon}
\le
C\Bigl(
\sqrt{\|\widehat{\theta}-\theta\|}
+
\epsilon
\Bigr)
\]
for some universal constant \(C>0\).
\end{Theorem}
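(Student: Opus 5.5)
Two claims need proof. The first is a coverage bound: $\widetilde{\epsilon}$ small forces $\widehat{\theta}$ close to $\theta$. The second is an upper bound on $\widetilde{\epsilon}$, obtained by exhibiting a feasible $\delta$.

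\emph{Coverage bound.} Let $\delta$ be feasible in (\ref{ineq:hat-epsilon-relax}) with $\delta$ close to $\widetilde{\epsilon}$; by monotonicity and a limiting argument it suffices to treat such $\delta$. Then $P\ge(1-\delta)D$ with $D\equiv_{m}N(\widehat{\theta},I_d)$, where $m=m_\delta\asymp\log(1/\delta)$. Set $v=(\widehat{\theta}-\theta)/\|\widehat{\theta}-\theta\|$ and $s=\|\widehat{\theta}-\theta\|\le 1$, and project onto $v$. Let $y=v^\top(x-\theta)$. The one-dimensional marginal $P_v$ satisfies $P_v=(1-\epsilon)N(0,1)+\epsilon Q_v\ge(1-\delta)D_v$, where $D_v$ matches the first $m$ moments of $N(s,1)$. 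The key tool is a one-sided polynomial test. Take a degree-$m$ polynomial $g$ (with $m$ even) that approximates the indicator of the left tail $\{y\le -t\}$ with $t\asymp\sqrt m$, so that $g\ge 0$ everywhere. Since $P$ dominates $(1-\delta)D$, we get $\mathbb{E}_{P_v}[g]\ge(1-\delta)\mathbb{E}_{D_v}[g]=(1-\delta)\mathbb{E}_{N(s,1)}[g]$.

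A more robust route is to argue with the lower tail directly. From $P\ge(1-\delta)D$, the mass $D$ places on $\{y\le -t\}$ is at most $P_v(y\le -t)/(1-\delta)$. Contamination can only add mass, but the Gaussian part contributes $\Phi(-t)$, so this probability is not controlled by $\delta$ alone. I would therefore use the moment structure instead. Choose an even polynomial such as $g(y)=(y-s)^{m}$, or a Hermite-type combination, whose $D$-expectation equals its $N(s,1)$-expectation. Comparing with the upper bound $\mathbb{E}_{P}[g]\ge(1-\delta)\mathbb{E}_D[g]$ is useless on its own, because contamination makes $\mathbb{E}_P[g]$ unbounded. The workable approach is to argue by contradiction using the SQ-style construction in reverse. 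Suppose $s\ge C/\sqrt{\log(1/\delta)}$. Then I would show that no $D$ exists with $P\ge(1-\delta)D$ and $D\equiv_m N(\widehat{\theta},I_d)$. The reason is that, restricted to the window $|y|\le t/4$ with $t\asymp\sqrt m$, $P_v$ is at most the density $\phi(y)+\epsilon q_v$. Moment matching forces $D_v$ to place mass near the $N(s,1)$ bulk, using the extremal property of Gaussian quadrature (Lemma 8.18 in \cite{diakonikolas2023algorithmic}). This yields a degree-$m$ polynomial $g\ge 0$ such that $\mathbb{E}_{N(s,1)}g$ exceeds $\mathbb{E}$ of $g$ under any sub-measure of $P_v/(1-\delta)$ by a margin of order $\delta$. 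I expect this contradiction step to be the main obstacle, since contamination prevents any upper control of $P$ and one must exploit that $D$ lies below $P/(1-\delta)$ pointwise. My first attempt would be $g=\indi$-approximating polynomials supported on the left side $y<s-\Theta(1/s)$. On that side, $N(0,1)$ has density smaller than $N(s,1)$ by a factor $e^{-\Theta(1)}$ once $|y|\gtrsim 1/s$, and $1/s\lesssim\sqrt m$ keeps the approximation error $e^{-\Omega(m)}\ll\delta$.

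\emph{Upper bound on $\widetilde{\epsilon}$.} Choose $\delta=C(\sqrt{s}+\epsilon)$ and construct $D$ explicitly, mirroring (\ref{eq:A-NGCA-SQ}). In direction $v$, take $D_v\propto\min\{\phi(y),\phi(y-s)\}$ on $|y|\le t$ plus a correcting polynomial $p$ on $|y|\le t/4$, with $t\asymp\sqrt{m_\delta}$, chosen so that the first $m_\delta$ moments match $N(s,1)$; take $D$ to be standard Gaussian in the orthogonal directions. The mass lost by truncating and taking the minimum is $O(st)+e^{-\Omega(t^2)}$, and with $m\asymp\log(1/\delta)$ this is $O(\delta)$. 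Domination $(1-\epsilon)\phi\ge(1-\delta)D_v$ then follows once the corrections are small relative to $\phi$ on the window, which holds because $st^2\lesssim s\log(1/s)\lesssim\sqrt s$. This yields $\widetilde{\epsilon}\le\delta$.
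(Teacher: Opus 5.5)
Your upper bound on $\wt\epsilon$ follows the paper's approach, but the coverage half has a genuine gap. None of the routes you try yields a contradiction, and you explicitly defer the decisive step. You did write down the right test function, $g(y)=(y-s)^{m}$ with $m=m_\delta$ even, but discarded it because you only compared $\bbE_P g$ with $\bbE_D g$.

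The missing idea is to use $P\ge(1-\delta)D$ to \emph{lower}-bound $\bbE_D g$ for a nonnegative $g$. Split $(1-\delta)D=\mu_G+\mu_Q$ with $\mu_G\le(1-\epsilon)N(\theta,I_d)$ and $\mu_Q\le\epsilon Q$. Drop $\mu_Q$, which contributes nonnegatively since $g\ge0$. Then $\mu_G$ carries weight at least $1-\frac{\epsilon}{1-\delta}\ge\frac23$ of $D$, and once normalized it is a probability $D_1$ with $N(\theta,I_d)\ge(1-\frac{\delta}{1-\epsilon})D_1$; this is Lemma \ref{lm:structural-result}. Consequently $\bbE_D g$ is at least $\frac23$ times the $g$-integral of $N(0,1)$ (in your coordinates) after deleting $O(\delta)$ mass where $|y-s|$ is largest. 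Because $m_\delta\le\log(1/\delta)/9$ gives $\Phi^{-1}(1-\delta)-s\ge\sqrt{2m}$, the deleted mass lies beyond the region $|y-s|\asymp\sqrt m$ that carries the $m$th moment. So the lower bound is $\gtrsim e^{s\sqrt{m/e}}(m/e)^{m/2}$ by Lemma \ref{lm:gaussian-tail}(iv). Meanwhile moment matching pins $\bbE_D g=\bbE_{N(s,1)}(Y-s)^m=(m-1)!!\le\sqrt2 e(m/e)^{m/2}$. Hence $s<8/\sqrt{m_\delta}$ for every feasible $\delta$ below a small absolute constant (larger $\delta$ are trivial since $s\le1$), which is the claim.

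The alternatives you propose do not work.
\begin{itemize}
\item A nonnegative $g$ with $\bbE_{N(s,1)}g$ exceeding $\bbE_\mu g$ for every probability $\mu\le P_v/(1-\delta)$ cannot exist when $\epsilon>0$. The contamination $Q$ may place its mass where $g$ is large, so under arbitrary contamination only lower bounds on $\bbE_D g$ are available.
\item Your left-tail heuristic has the density comparison reversed. Since $\phi(y)/\phi(y-s)=e^{-sy+s^2/2}>1$ for $y<s/2$, on that side the true Gaussian has \emph{more} mass than $N(s,1)$. That excess is precisely the mechanism: $D$ cannot shed it, because it may discard only $O(\delta)$ mass. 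A deficit of the kind you describe could simply be filled by contamination and gives no contradiction.
\item For the same reason, the relevant smallness condition is that $\delta$ is much smaller than the Gaussian mass at distance $\asymp\sqrt m$, i.e.\ $m\lesssim\log(1/\delta)$. It is not that an approximation error is $\ll\delta$.
\end{itemize}

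On the second half, one difference from Lemma \ref{lm:population-existence-analysis} is worth noting. The paper keeps the target density exactly on $[-t,t]$ with $t=\delta/(2r)$, which is legitimate since $tr\lesssim\delta$. As a result, its correcting polynomial cancels only exponentially small tail defects. Your $\min\{\phi(y),\phi(y-s)\}$ instead creates $\Theta(s)$ moment defects in the bulk. The polynomial $p$ must absorb these within a sup-norm budget of order $\delta\,\phi(t/4)$. That is feasible here only because $s\lesssim\delta^2$ and $m\le\log(1/\delta)/9$, and you would need to verify it.
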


As a direct consequence of Theorem \ref{th:hat-epsilon-guarantee}, the confidence set $\widehat{\CS}
=
B\left(
\widehat{\theta},
\frac{C}{\sqrt{\log(1/\widetilde{\epsilon})}}
\right)$
satisfies the coverage property. Moreover, its diameter is bounded by
\[
|\widehat{\CS}|
\lesssim
\frac{1}{\sqrt{\log(1/\|\widehat{\theta}-\theta\|)}}
+
\frac{1}{\sqrt{\log(1/\epsilon)}}.
\]
Therefore, whenever $\widehat{\theta}$ is consistent,
the confidence set automatically adapts to the unknown contamination level $\epsilon$
and achieves diameter
$O\!\left(
\frac1{\sqrt{\log(1/\epsilon)}}
\right)$
up to an additional term determined by the estimation error
$\|\widehat{\theta}-\theta\|$.

We note that the upper bound $\widetilde{\epsilon}
\le
C\Bigl(
\sqrt{\|\widehat{\theta}-\theta\|}
+
\epsilon
\Bigr)$
is closely connected to the SQ lower bound construction in Section \ref{sec:sq-ngca}. To establish this bound, it suffices to construct a distribution $D$ satisfying
\[
P_{\epsilon,\theta,Q}
\ge
(1-\delta)D
\qquad\text{and}\qquad
D\equiv_{m_\delta}N(\widehat{\theta},I_d),
\]
with $\delta
=
C\Bigl(
\sqrt{\|\widehat{\theta}-\theta\|}
+
\epsilon
\Bigr)$.
By a translation argument, this is equivalent to finding a distribution $D$ such that
\begin{equation}
\label{eq:D-condition2}
P_{\epsilon,\theta-\widehat{\theta},Q}
\ge
(1-\delta)D
\qquad\text{and}\qquad
D\equiv_{m_\delta}N(0,I_d).
\end{equation}
Recall the hidden-direction distribution $P_u^A$ defined in (\ref{eq:SQ-con}) with $A$ given by (\ref{eq:sq-con-A}) and (\ref{eq:A-NGCA-SQ}). Consider the hidden-direction distribution $D=P_u^A$, where
$u=\frac{\theta-\widehat{\theta}}
{\|\theta-\widehat{\theta}\|}$
and where the parameters $\epsilon_{\max}$ and $r$ in the definition of $A$ are replaced by $\delta$ and $\|\widehat{\theta}-\theta\|$, respectively.
Then the condition $D\equiv_{m_\delta}N(0,I_d)$
is automatically satisfied because $A$ is constructed to match the moments of $N(0,1)$. Remarkably, the same construction also satisfies
$P_{\epsilon,\theta-\widehat{\theta},Q}
\ge
(1-\delta)D$,
which leads to the desired upper bound on $\widetilde{\epsilon}$.

\subsection{Finite-Sample Algorithm and Theoretical Guarantees} \label{sec:quasi-finite-sample}

Having developed the population-level quantity (\ref{ineq:hat-epsilon-relax}), we now introduce its finite-sample counterpart. We work with i.i.d.~observations $X_1,\ldots,X_n\sim P_{\epsilon,\theta,Q}$ together with a pilot location estimator $\widehat{\theta}$. The estimator $\widehat{\theta}$ is assumed to be independent of $(X_1,\ldots,X_n)$, a property that can always be achieved by sample splitting.

The condition $P\geq (1-\delta)D$ means that samples generated from $D$ can be obtained by rejection sampling from the distribution $P$. More concretely, given i.i.d.\ observations $X_1,\ldots,X_n$ from $P=P_{\epsilon,\theta,Q}$, there exists a subset $S\subset[n]$ with cardinality approximately $n(1-\delta)$ such that $\{X_i\}_{i\in S}$ can be regarded as samples generated from $D$. Consequently, the population moment $\mathbb{E}_{X\sim D}[p(X)]$ can be estimated by
$\frac{1}{|S|}\sum_{i\in S} p(X_i)$.
Searching over all such subsets is computationally intractable. We therefore replace subsets by weights and define
\[
\mathcal{W}_{n,\delta}
=
\left\{
w\in\left[0,\frac{1}{n(1-\delta)}\right]^n:
\sum_{i=1}^n w_i=1
\right\}.
\]
For some $w\in\mathcal{W}_{n,\delta}$,
the weighted empirical moment $\sum_{i=1}^n w_i p(X_i)$
can then be viewed as a relaxation of the empirical average over a subset of size approximately $n(1-\delta)$.

The condition $D\equiv_{m_\delta}N(\widehat{\theta},I_d)$ means that
$\mathbb{E}_{X\sim D}[p(X)]
=
\mathbb{E}_{X\sim N(\widehat{\theta},I_d)}[p(X)]$ 
for every polynomial $p\in\mathcal{P}_{m_\delta,d}$. To obtain a finite-sample analogue, we replace population moments by weighted empirical moments and exact moment matching by high-probability error bounds. Moreover, instead of checking every polynomial in $\mathcal{P}_{m_\delta,d}$, we only check a basis.
Define
\[
\mathcal{H}_{m,d}
=
\left\{
h_\alpha(x)
=
\prod_{j=1}^d h_{\alpha_j}(x_j)
:
\alpha\in\mathbb{N}^d,\,
\sum_{j=1}^d \alpha_j\le m
\right\},
\]
where
\[
h_k(x)
=
\frac{(-1)^k}{\sqrt{k!}}
e^{x^2/2}
\frac{d^k}{dx^k}
e^{-x^2/2}
\]
is the $k$th normalized Hermite polynomial \citep{szeg1939orthogonal}. The collection $\mathcal{H}_{m,d}$ has cardinality $\binom{d+m}{m}$ and forms an orthonormal basis of $\mathcal{P}_{m,d}$ under the standard Gaussian measure.
For $m\in\mathbb{N}$, define the discrepancy functional
\[
T_m(w,\widehat{\theta})
=
\max_{h\in\mathcal{H}_{m,d}}
\frac{\left|\sum_{i=1}^n w_i h(X_i-\widehat{\theta})-\mathbb{E}_{X\sim N(0,I_d)}[h(X)]\right|}{\sqrt{\binom{d+m}{m}}}.
\]
The quantity $T_m(w,\widehat{\theta})$ measures how well the weighted empirical moments match those of a Gaussian distribution centered at $\widehat{\theta}$ up to degree $m$.

We are now ready to define the finite-sample counterpart of (\ref{ineq:hat-epsilon-relax}). Recall that our goal is to develop an algorithm using $d^{O(m)}$ samples and $d^{O(m)}$ computational complexity. For a given parameter $m>0$, we set $n=\lceil d^m\rceil$
and redefine $m_\delta$ as
\begin{equation}\label{def:m-delta}
	m_\delta = \arg\max\left\{
a \in \mathbb{Z}: a \text{ is even and } a \leq 
\left\lfloor
\frac{1}{9}\log(1/\delta)
\wedge
\frac{m}{4}
\right\rfloor \right\}.
\end{equation}
We then define
\begin{equation}
\label{eq:hat-epsilon-finite}
\widehat{\epsilon}
=
\inf
\left\{
\delta\in\Delta:
\exists w\in\mathcal{W}_{n,\delta}
\text{ such that }
T_{m_\delta}(w,\widehat{\theta})
\le
2\left(1+\frac{4}{\alpha}\right)
\sqrt{
\frac{1}
     {n(1-\delta)}
}
\right\},
\end{equation}
where
\begin{equation}
\Delta
=
\{
\delta_{\min},
2\delta_{\min},
4\delta_{\min},
\ldots,
2^{\lfloor\log_2(1/\delta_{\min})\rfloor}\delta_{\min},
1
\}, \label{eq:grid-Delta}
\end{equation}
with $\delta_{\min}
=
4\sqrt{\frac{\log(8/\alpha)}{n}}$. Intuitively, $\wh{\epsilon}$ is the smallest contamination level for which one can assign weights to the observations so that the resulting weighted empirical moments agree with those of a Gaussian distribution centered at $\wh{\theta}$.

The quantity $\widehat{\epsilon}$ defined in (\ref{eq:hat-epsilon-finite}) is computationally tractable. The grid $\Delta$ contains at most $O(\log n)$ points. For each fixed $\delta\in\Delta$, the condition $T_{m_\delta}(w,\widehat{\theta})
\le
2\left(1+\frac{4}{\alpha}\right)
\sqrt{\frac{1}{n(1-\delta)}}$
can be expressed as a collection of linear constraints in the variable $w$. Since the constraint $w\in\mathcal{W}_{n,\delta}$ is also linear, determining whether a given $\delta$ is feasible reduces to a linear programming feasibility problem.
For each $\delta\in\Delta$, the corresponding feasibility problem has $n=\lceil d^m\rceil$ variables and
\[
2\left(|\mathcal{H}_{m_\delta,d}|+n+1\right)
=
2\left(\binom{d+m_\delta}{m_\delta}+n+1\right)
=
d^{O(m)}
\]
linear constraints. Therefore, the feasibility problem can be solved in time polynomial in $d^{O(m)}$. Using state-of-the-art algorithms for linear programming \citep{van2020deterministic,cohen2021solving}, the overall computational complexity of evaluating $\widehat{\epsilon}$ is bounded by $d^{O(m)}$. The computation of $\wh{\epsilon}$ and the corresponding confidence set is outlined in Algorithm \ref{alg:ARCS}.

\begin{algorithm}[ht]
\DontPrintSemicolon
	\SetKwInOut{Input}{Input}\SetKwInOut{Output}{Output}
\textbf{Input: }observations $X_1,\ldots,X_n$, pilot estimator $\widehat{\theta}$, $m > 0$, confidence level $\alpha$.\\
 \textbf{Output: }confidence set $\wh{\CS}$.

\nl Construct the grid $\Delta$ according to (\ref{eq:grid-Delta}).

\nl For each $\delta\in\Delta$, check whether there exists $w\in\mathcal{W}_{n,\delta}$ satisfying
$$T_{m_\delta}(w,\widehat{\theta})
\le
2\left(1+\frac{4}{\alpha}\right)
\sqrt{\frac{1}{n(1-\delta)}}.$$

\nl Set $\wh{\epsilon}$ to be the smallest feasible $\delta\in\Delta$.

\nl Set $\wh{\CS} = B\left(\wh{\theta}, \frac{11}{\sqrt{ m_{\wh{\epsilon}}}} \right)$. 
\caption{Adaptive Robust Confidence Set}
\label{alg:ARCS}
\end{algorithm}

The following theorem shows that Algorithm \ref{alg:ARCS} inherits the adaptive guarantees suggested by the population-level analysis in Section \ref{sec:pop}.

\begin{Theorem}\label{thm:finite-sample-analysis}
There exist universal constants $C,C'>0$, such that for any $\alpha\in(0,1)$ and $n=\lceil d^m\rceil\geq C/\alpha^4$, as long as the pilot estimator $\wh{\theta}$, which is independent of the observations, satisfies
\begin{equation}
\inf_{\epsilon\in[0,0.05],\theta,Q}P_{\epsilon,\theta,Q}\left(\|\wh{\theta}-\theta\|\leq C\left(\sqrt{\frac{d}{n}}+\epsilon + \sqrt{\frac{\log(1/\alpha)}{n}}\right)^{0.1}\right) \geq 1-\frac{\alpha}{6}, \label{eq:pilot}
\end{equation}
the confidence set $\wh{\CS}$ output by Algorithm \ref{alg:ARCS} satisfies
	\begin{equation*}
		\begin{split}
			 &\quad \inf_{ \epsilon \in [0, 0.05], \theta, Q} P_{\epsilon, \theta, Q}\left( \theta \in\widehat{\CS} \right) \geq 1-\alpha,\\
			 & \quad\inf_{\epsilon \in [0, 0.05], \theta, Q } P_{\epsilon, \theta, Q}\left( |\widehat{\CS}| \leq C' \left(\frac{1}{\sqrt{\log (n/d )} }+\frac{1}{\sqrt{ \log(1/\epsilon) }}+ \frac{1}{\sqrt{ \log(n/\log(1/\alpha)) }} +\frac{1}{\sqrt{m}}\right)\right) \geq 1-\alpha.
		\end{split}
	\end{equation*} 
\end{Theorem}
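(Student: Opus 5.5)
We prove Theorem \ref{thm:finite-sample-analysis} by establishing finite-sample versions of the two conclusions of Theorem \ref{th:hat-epsilon-guarantee}. Throughout we condition on the pilot estimator $\wh{\theta}$, which is independent of $X_1,\ldots,X_n$. We work on the event, of probability at least $1-\alpha/6$, that $\|\wh{\theta}-\theta\|\le C(\sqrt{d/n}+\epsilon+\sqrt{\log(1/\alpha)/n})^{0.1}$.

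\textbf{Coverage.} This is the analogue of the first inequality of Theorem \ref{th:hat-epsilon-guarantee}, read contrapositively. Suppose some $\delta\in\Delta$ is feasible, with weights $w\in\mathcal{W}_{n,\delta}$. We show that, with high probability, $\|\wh{\theta}-\theta\|\le 11/\sqrt{m_\delta}$.

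Let $u=(\theta-\wh{\theta})/\|\theta-\wh{\theta}\|$ and consider the one-dimensional polynomial test function $f(x)=p(u^\top(x-\wh{\theta}))$ of degree $m_\delta$. For $p$ we take the dual certificate used in the population proof, namely a polynomial that detects a shift of size $r$ under a $(1-\delta)$ fraction. For example, $p(y)=y^{k}$ with $k$ even and $k\approx m_\delta$, or a truncated Hermite combination. It expands in $\mathcal{H}_{m_\delta,d}$ with coefficients of bounded $\ell_1$ norm relative to $\sqrt{\binom{d+m_\delta}{m_\delta}}$. The feasibility constraint then gives $|\sum_i w_i f(X_i)-\mathbb{E}_{N(0,1)}p|\lesssim \alpha^{-1}n^{-1/2}\cdot\|{\rm coef}\|$.

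Because $w_i\le 1/(n(1-\delta))$ and $p\ge 0$ if we use an even power, we have $\sum_i w_i f(X_i)\ge \frac{1-\epsilon}{1-\delta}\cdot\frac{1}{(1-\epsilon)n}\sum_{\text{inliers}} f(X_i)$ restricted to a subset. Here I would rather use a lower-tail argument: the inliers' projections concentrate around $\|\theta-\wh{\theta}\|$, so at least a $(1-\delta)$ weight fraction must sit on points whose empirical $k$-th moment around $\wh\theta$ exceeds the Gaussian one when $r\gg 1/\sqrt{k}$. Concretely, the population statement $\mathbb{E}_D[y^k]\ge (1-\delta)\min\text{-mass moment}$ contradicts moment matching once $r\ge 11/\sqrt{m_\delta}$, which is exactly the mechanism behind the first part of Theorem \ref{th:hat-epsilon-guarantee}.

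The empirical versions of these moments concentrate because $n=d^m\ge d^{4m_\delta}$ (since $m_\delta\le m/4$). Each Hermite empirical mean has variance $O(e^{O(m_\delta)}\cdot{\rm poly})$ relative to $n$. A Chebyshev bound with a union over $\binom{d+m_\delta}{m_\delta}\le d^{m_\delta}$ basis elements, normalized by $\sqrt{\binom{d+m_\delta}{m_\delta}}$, costs the $4/\alpha$ factor. This explains the threshold $2(1+4/\alpha)/\sqrt{n(1-\delta)}$. Applying the argument to $\delta=\wh\epsilon$ yields $\theta\in B(\wh{\theta},11/\sqrt{m_{\wh{\epsilon}}})$.

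\textbf{Diameter.} Set $\delta^*$ to be the smallest grid point at least $C(\sqrt{\|\wh{\theta}-\theta\|}+\epsilon+\delta_{\min})$. We show that $\delta^*$ is feasible with probability $1-\alpha/3$, which gives $\wh{\epsilon}\le\delta^*$.

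To do this, take the population construction $D=P_u^A$ from Section \ref{sec:pop}, with $P\ge(1-\delta^*)D$. Realize it through rejection sampling: accept $X_i$ with probability $(1-\delta^*)\,dD/dP(X_i)$ and set $w_i=\indi\{\text{accepted}\}/N_{\rm acc}$. By Hoeffding, $N_{\rm acc}\ge n(1-\delta^*)-\sqrt{n\log(8/\alpha)}$. Then rescale, or mix with a tiny multiple of uniform weight, to enforce the cap $1/(n(1-\delta^*))$; the term $\delta_{\min}$ in $\delta^*$ absorbs this. The accepted points are i.i.d. from $D$, and $D\equiv_{m_{\delta^*}}N(\wh{\theta},I_d)$. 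So each Hermite weighted mean has mean equal to the Gaussian value and variance at most $\mathbb{E}_D h^2/N_{\rm acc}$.

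We bound $\mathbb{E}_D h^2\le 1/(1-\delta^*)\cdot\mathbb{E}_P h^2$. We also have $\mathbb{E}_{N(\theta-\wh\theta,I)}h^2=O(1)$ for $\|\theta-\wh\theta\|\le1$ and degree $\le m/4$; this step needs care and may require $h^2$ bounds that grow like $e^{O(m_\delta r^2)}$. Summing the variances over the basis, Chebyshev on the sum of squares rather than a union bound controls the max by the required threshold with failure probability $\alpha/4$.

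Then $|\wh{\CS}|=22/\sqrt{m_{\wh\epsilon}}\lesssim 1/\sqrt{\log(1/\delta^*)}+1/\sqrt{m}$. Plugging in \eqref{eq:pilot} gives $\log(1/\sqrt{\|\wh\theta-\theta\|})\gtrsim\log\min(n/d,1/\epsilon,n/\log(1/\alpha))$, which yields the stated rate.

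\textbf{Main obstacle.} The hardest step is the finite-sample coverage argument. We must turn approximate, normalized Hermite moment matching of an arbitrary reweighting into the geometric conclusion $\|\wh\theta-\theta\|\lesssim m_\delta^{-1/2}$. This requires an explicit low-degree certificate polynomial whose Hermite coefficients are controlled, and a robust version of the population lemma that tolerates $O(\alpha^{-1}n^{-1/2})$ moment errors. I would first attempt it with a univariate polynomial in direction $u$ of degree $m_\delta$ that is nonnegative and large on $[r/2,\infty)$ relative to its Gaussian mean.
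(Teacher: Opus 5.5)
Your architecture matches the paper's: an even-power certificate along $u=(\theta-\wh\theta)/\|\theta-\wh\theta\|$ for coverage, and rejection sampling from the population NGCA distribution $D$ for the diameter. The diameter argument, however, fails at two concrete steps.

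\textbf{The variance bound.} The inequality $\bbE_D h^2\le(1-\delta^*)^{-1}\bbE_P h^2$ gives nothing, because $\bbE_P h^2$ contains $\epsilon\,\bbE_Q h^2$, which is arbitrary and may be infinite. The relation $P\ge(1-\delta)D$ alone never controls second moments under $D$. The paper's missing ingredient is to make $D$ match twice as many moments as the LP checks. It works with
$\texttt{F}'(\delta)=\{D: P_{\epsilon,\theta-\wh\theta,Q}\ge(1-\delta/2)D,\ D\equiv_{2m_\delta}N(0,I_d)\}$.
This set is nonempty for $\delta\ge 2(\sqrt r+\epsilon)$ by Lemma \ref{lm:population-existence-analysis} applied at level $\sqrt r$, since $2m_\delta\le\log(1/\sqrt r)\le 1/(256r)$ for small $r$. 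Because $\sum_{h\in\cH_{m_\delta,d}}h(x)^2$ has degree $2m_\delta$, one gets $\bbE_D\sum_h h^2=N$ exactly. Symmetrization plus Markov (Lemma \ref{eq:assump-concentration}) then give the LP threshold with no dependence on $r$ or $m$.

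Alternatively, you could repair your own route by dominating $D$ by the inlier Gaussian, $N(ru,I_d)\ge(1-\sqrt r)D$. The same lemma provides this, and it is how $P\ge(1-\delta)D$ is obtained in the first place. But then your claim $\bbE_{N(ru,I_d)}h^2=O(1)$ is false as stated: the correct bound is $e^{2r\sqrt{m_\delta}}$. That is acceptable only because $\delta^*\ge\sqrt r$ forces $m_{\delta^*}\lesssim\log(1/r)$, hence $r\sqrt{m_{\delta^*}}=O(1)$.

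\textbf{The weight cap.} Accepting with probability $(1-\delta^*)\,dD/dP$ makes $N_{\rm acc}<n(1-\delta^*)$ with probability about $1/2$. Then $w_i=1/N_{\rm acc}$ violates $w_i\le 1/(n(1-\delta^*))$. Mixing in uniform weight does not help: any weight moved onto non-accepted points can land on outliers, whose Hermite values are arbitrary. The fix, which is the paper's, is to build $D$ at level $\delta^*/2$. A Chernoff bound then gives $N_{\rm acc}\ge n(1-\delta^*)$ with probability $1-\alpha/4$ once $n\delta^*\gtrsim\log(1/\alpha)$. Your slack in $\delta^*$ allows exactly this.

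\textbf{Coverage.} You have the right certificate but leave the finite-sample step open. Your Chebyshev-over-the-basis remark is misplaced here: in this direction feasibility of $w$ is assumed, so no Hermite concentration is needed. The paper closes the step as follows.

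Upper bound: take $p(x)=(u^\top x)^{m_\delta}$. Cauchy--Schwarz over the orthonormal basis turns $T_{m_\delta}(w,\wh\theta)\le 2(1+4/\alpha)(n(1-\delta))^{-1/2}$ into
$\sum_i w_i p(X_i-\wh\theta)\le (m_\delta-1)!!+2(1+4/\alpha)\sqrt{(2m_\delta-1)!!}\,N/\sqrt{n(1-\delta)}\le 5(m_\delta/e)^{m_\delta/2}$,
using $n\ge d^{4m_\delta}$ and $n\ge C/\alpha^4$.

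Lower bound: $p\ge 0$ lets you discard outliers. The cap then forces $\min_{w\in\cW_{n,\delta}}\sum_i w_i p(X_i-\wh\theta)$ to be at least $\frac{1}{n(1-\delta)}\sum_{i\le n_0}Z_i\indi\{Z_i\le Z_{(\lfloor n_0-n\delta\rfloor)}\}$, where $Z_i=(u^\top Y_i)^{m_\delta}$ ranges over the $n_0$ inliers. Three further steps finish it:
\begin{itemize}
\item DKW places that order statistic above $F_\delta^{-1}(1-2\delta)$; this is where $\delta\ge\delta_{\min}$ is used.
\item Chebyshev with a union bound over the $O(\log n)$ grid points controls the truncated mean. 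The union bound is needed because $\wh\epsilon$ is data dependent.
\item Lemma \ref{lm:gaussian-tail}(iv) gives a population value $\gtrsim e^{r\sqrt{m_\delta/e}}(m_\delta/e)^{m_\delta/2}$, which exceeds $5(m_\delta/e)^{m_\delta/2}$ once $r\ge 11/\sqrt{m_\delta}$.
\end{itemize}
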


We note that the diameter upper bound contains an additional term of order $m^{-1/2}$ compared with the information-theoretic lower bound. This additive $O(m^{-1/2})$ term is unavoidable for all Statistical Query algorithms with complexity $d^{O(m)}$ in view of Theorem \ref{th:SQ-lower-bound}.

The condition (\ref{eq:pilot}) on the pilot estimator is very mild. The exponent $0.1$ is not essential and can be replaced by any fixed constant in $(0,1]$. In particular, replacing $0.1$ by $1$ corresponds to the information-theoretically optimal estimation rate, which can be achieved by the Tukey median \citep{chen2018robust}. However, the Tukey median is not computationally feasible. Among polynomial-time algorithms, both \cite{diakonikolas2024near} and \cite{dalalyan2022all} satisfy (\ref{eq:pilot}), although the procedure of \cite{diakonikolas2024near} requires a slight modification based on Lepski's method \citep{lepskii1991problem,lepskii1992asymptotically,jain2022robust} in order to adapt to the unknown contamination level $\epsilon$.

Taken together, Theorems \ref{th:SQ-lower-bound} and \ref{thm:finite-sample-analysis} characterize the computational--statistical tradeoff for adaptive 
confidence sets under Huber contamination. The lower bound shows that achieving 
confidence set diameter $O(m^{-1/2})$ requires computational complexity at least 
$d^{\Omega(m)}$ within the SQ framework, while Algorithm \ref{alg:ARCS} attains an 
essentially matching upper bound up to constant factors. Thus, the proposed 
procedure provides a continuum of confidence sets that interpolate between 
statistical accuracy and computational efficiency, and the SQ lower bound shows 
that this tradeoff is essentially optimal.



\vskip 0.2in
\bibliographystyle{apalike}
\bibliography{reference}

		\newpage
		\appendix
		\
			\begin{center}
			{\LARGE Supplement to "Adaptive Robust Confidence Sets in High Dimensions" 
				
			}		
			\medskip
%
			\medskip
		\end{center}
%
		In this supplement, we provide a table of contents and the rest of the technical proofs.
		\tableofcontents
%

\section{Proofs in Section \ref{sec:adaptiveRCI}} \label{sec:app-adaptiveCI-additional-result}

\subsection{Proof of Theorem \ref{thm:upper-bound-general-Gaussian}}
First, we note that the coverage guarantee directly follows from Proposition \ref{th:estimation-guarantee}. Now, we move to the diameter guarantee. It is easy to check that $t_{\epsilon} \in \left[1.6, \Phi^{-1}\left(1 - 300 \sqrt{ \frac{d}{n} } - \sqrt{ \frac{\log(1/\alpha)}{n} } \right)\right]$ for all $\epsilon \in [0,0.05]$ when $n/(\log(1/\alpha)+d)$ is sufficiently large. Since Proposition \ref{th:estimation-guarantee} only uses concentration from the uniform DKW inequality for multivariate distributions (see Lemma \ref{lm:multi-DKW}), by the calculation in \eqref{ineq:length-guarantee}, we have $|\widehat{\CS}| \leq 4/t_{\epsilon}$ with probability at least $1-\alpha$. Finally, we have $1/t_{\epsilon} \lesssim  \frac{1}{\sqrt{\log(1/\epsilon)}} + \frac{1}{\sqrt{ \log(n/\log(1/\alpha)) }} + \frac{1}{\sqrt{\log( n/d )}}$ for all $\epsilon \in [0, 0.05]$ by Lemma \ref{lm:t-epsilon-bound} in Appendix \ref{sec:proof-additional-lemmas}.

\subsection{Proof of Proposition \ref{th:estimation-guarantee}}
 By the uniform DKW inequality for multivariate distributions (see Lemma \ref{lm:multi-DKW} in Appendix \ref{sec:proof-additional-lemmas}), we have with probability at least $1 - \alpha$, the following event holds:
	\begin{equation*}
		(A) = \left \{\textnormal{for all } u \in \cS^{d-1} \text{ and } \textnormal{for all } x \in \bbR, |F_n(x; u) - F_{\epsilon, \theta, Q}(x;u)| \leq 300 \sqrt{ \frac{d}{n} } + \sqrt{ \frac{\log(1/\alpha)}{n} } \right \},
	\end{equation*}where $F_{\epsilon, \theta, Q }(\cdot; u)$ denotes the CDF of $X^\top u$ under $X \sim P_{\epsilon, \theta, Q}$. 	

Now we prove \eqref{ineq:upper-side} based on ($A$). For simplicity, we denote the range of $t$ we consider as $[t_{\min}, t_{\max}]$. Since $n \geq C(d + \log(1/\alpha))$ for a sufficiently large $C>0$, we have $\Phi^{-1}\left(1 - \epsilon_{\max} - 300 \sqrt{ \frac{d}{n} } - \sqrt{ \frac{\log(1/\alpha)}{n} } \right) \geq 1.6$. We start by deducing a sufficient condition on the guarantee of \eqref{ineq:upper-side} given ($A$). For any $t \in [t_{\min}, t_{\max}]$ and any $u \in \cS^{d-1}$,
	\begin{equation}\label{eq:left-estimator-reduction}
		\begin{split}
			& \quad \quad \htheta^u(t)  - \theta^\top u \leq \frac{2}{t} \\
			& \Longleftrightarrow \theta^\top u - t + 2/t \geq F_n^{-1}( 2(1 - \Phi(t) ); u ) \\
			& \Longleftrightarrow F_n\left(\theta^\top u - t + 2/t ; u \right) \geq 2 (1 - \Phi(t) ) \\
			& \overset{(A)}\Longleftarrow  F_{\epsilon, \theta , Q} (\theta^\top u - t + 2/t; u ) - \left( 300 \sqrt{ \frac{d}{n} } + \sqrt{ \frac{\log(1/\alpha)}{n} } \right) \geq 2 (1 - \Phi(t) ) \\
			& \Longleftarrow (1 - \epsilon_{\max} )\bbP_{N(0,1)}(X \geq t -2/t) - \left( 300 \sqrt{ \frac{d}{n} } + \sqrt{ \frac{\log(1/\alpha)}{n} } \right)\geq 2 (1 - \Phi(t) ).
		\end{split}
	\end{equation} Based on \eqref{eq:left-estimator-reduction}, to show $\htheta^u(t)  - \theta^\top u \leq 2/t$ holds simultaneously for all $t \in [t_{\min}, t_{\max}]$, all $u \in \cS^{d-1}$ and uniformly over all $\epsilon\in[0,0.05]$, all $\theta\in\mathbb{R}^d$, and all $Q$ with probability at least $1 - \alpha$, it is enough to show 
	\begin{equation} \label{ineq:second-step-reduction}
		\begin{split}
& 2 \bbP_{N(0,1)}(X \geq t) < (1 - \epsilon_{\max} )\bbP_{N(0,1)}(X \geq t -2/t) - \left( 300 \sqrt{ \frac{d}{n} } + \sqrt{ \frac{\log(1/\alpha)}{n} } \right), \forall t \in [t_{\min}, t_{\max}] \\
			\Longleftarrow & \left\{ \begin{array}{l}
				\bbP_{N(0,1)}(X \geq t) \geq 300 \sqrt{ \frac{d}{n} } + \sqrt{ \frac{\log(1/\alpha)}{n} } , \forall t \in [t_{\min}, t_{\max}] \\
				(1 - \epsilon_{\max} )\bbP_{N(0,1)}(X \geq t -2/t) > 3 \bbP_{N(0,1)}(X \geq t),\forall t \in [t_{\min}, t_{\max}]
			\end{array} \right. \\
			\overset{ (a) }\Longleftarrow & \left\{ \begin{array}{l}
				t_{\max} \leq \Phi^{-1} \left( 1- \left( 300 \sqrt{ \frac{d}{n} } + \sqrt{ \frac{\log(1/\alpha)}{n} } \right) \right), \\
				(1 - \epsilon_{\max} )\phi(x - 2/t) > 3 \phi(x),\forall x \geq t, \forall t \in [t_{\min}, t_{\max}]
			\end{array} \right. \\
			\Longleftrightarrow & \left\{ \begin{array}{l}
				t_{\max} \leq \Phi^{-1} \left( 1- \left( 300 \sqrt{ \frac{d}{n} } + \sqrt{ \frac{\log(1/\alpha)}{n} } \right) \right), \\
				\exp( 2 - 2/t^2_{\min} ) > \frac{3}{1- \epsilon_{\max} },
			\end{array} \right.
		\end{split}
	\end{equation} where in (a), $\phi(\cdot)$ denotes the density of $N(0,1)$ and it is because if $(1 - \epsilon_{\max} )\phi(x - 2/t) > 3 \phi(x),\forall x \geq t$, then
	\begin{equation} \label{ineq:density-ratio-to-tail-prob}
	\begin{split}
		(1 - \epsilon_{\max} )\bbP_{N(0,1)}(X \geq t -2/t) & = \int_{x \geq t} (1 - \epsilon_{\max} )\phi(x - 2/t) dx  \\
		& >  \int_{x \geq t} 3 \phi(x) dx = 3 \bbP_{N(0,1)}(X \geq t).
	\end{split}
	\end{equation}

	Plug in $\epsilon_{\max} = 0.05$. Then $t_{\min} = 1.6$ and $t_{\max} = \Phi^{-1} \left( 1- \left( 300 \sqrt{ \frac{d}{n} } + \sqrt{ \frac{\log(1/\alpha)}{n} } \right) \right)$ satisfy the above conditions. This finishes the proof of \eqref{ineq:upper-side}.

	Next, we move to the proof of \eqref{eq:the-other-side} given $(A)$. Given $(A)$ happens, for any $t$ and $u \in \cS^{d-1}$, we have
	\begin{equation} \label{ineq:left-estimator-gua2}
		\begin{split}
			0 \leq \htheta^u(t)  - \theta^\top u&\Longleftrightarrow F_n^{-1}( 2(1 - \Phi(t) ); u )  + t - \theta^\top u\geq 0 \\
			& \Longleftarrow 2(1 - \Phi(t) ) > F_n(\theta^\top u-t; u ) \\
			& \overset{(A)}\Longleftarrow 2(1 - \Phi(t) ) > F_{ \epsilon, \theta, Q }(\theta^\top u-t; u ) +  300 \sqrt{ \frac{d}{n} } + \sqrt{ \frac{\log(1/\alpha)}{n} } \\
			& \Longleftarrow 2(1 - \Phi(t) ) \geq  \bbP_{N(0,1)}( X \leq -t ) + \epsilon +   300 \sqrt{ \frac{d}{n} } + \sqrt{ \frac{\log(1/\alpha)}{n} } \\
			& \Longleftrightarrow 2(1 - \Phi(t) ) \geq   1- \Phi(t) + \epsilon +   300 \sqrt{ \frac{d}{n} } + \sqrt{ \frac{\log(1/\alpha)}{n} }\\
			& \Longleftrightarrow 1 - \Phi(t) \geq \epsilon +  300 \sqrt{ \frac{d}{n} } + \sqrt{ \frac{\log(1/\alpha)}{n} }.
		\end{split}
	\end{equation}
	Notice that if we plug in $ t = t_{\epsilon} = \Phi^{-1}\left( 1- \epsilon-  300 \sqrt{ \frac{d}{n} } - \sqrt{ \frac{\log(1/\alpha)}{n} } \right) $, then the sufficient condition on the right-hand side of \eqref{ineq:left-estimator-gua2} will be satisfied. This finishes the proof of this proposition.


\subsection{Proof of Theorem \ref{th:lower-bound-Gaussian} }

We first show $r_{\alpha}(\epsilon, 0.05) \geq c  \frac{1}{\sqrt{\log(1/\epsilon)}}$. As we have discussed in Section \ref{sec:optimality-of-ARCI}, the diameter of an adaptive confidence set is lower bounded by the length of an adaptive confidence interval $\wh{\CI}$. Specifically, given $X_1, \ldots, X_n \overset{iid}\sim (1 - \epsilon)N(\theta_1, 1) + \epsilon Q_1$, generate $Y_i \overset{iid}\sim N(0, I_{d-1})$. Then $X_i' = [X_i, Y_i]^\top  \in \bbR^d \overset{iid}\sim (1- \epsilon) N(\theta, I_d) + \epsilon Q$ with $\theta = [\theta_1, 0_{d-1}]^{\top }$. Based on $\{X_i'\}$s, we can construct an adaptive confidence set for $\theta$, which further yields an adaptive confidence interval for $\theta_1$ as discussed in \eqref{eq:CS-to-CI}. Then by the length lower bound established in Theorem 1 of \cite{luo2024adaptive}, we have $r_{\alpha}(\epsilon, 0.05) \geq c  \frac{1}{\sqrt{\log(1/\epsilon)}}$. 

Next, we prove $r_{\alpha}(\epsilon, 0.05) > c\left( \frac{1}{\sqrt{ \log(n/\log(1/\alpha)) }} + \frac{1}{\sqrt{\log(n/d)}} \right)$. By the monotonicity of $r_{\alpha}(\epsilon, 0.05)$ in $\epsilon$, it is enough to show $r_{\alpha}(0, 0.05) > c\left( \frac{1}{\sqrt{ \log(n/\log(1/\alpha)) }} + \frac{1}{\sqrt{\log(n/d)}} \right)$. We divide the proof into two cases.
\begin{itemize}
	\item Case 1: $\alpha \in [0.05,1/4)$. In this case, $\frac{1}{\sqrt{ \log(n/\log(1/\alpha)) }} + \frac{1}{\sqrt{\log(n/d)}} \lesssim \frac{1}{\sqrt{\log n}}$. Since the confidence set implies an adaptive confidence interval $\wh{\CI}$ for a one-dimensional mean estimation problem, following the same analysis as above, we have $r_{\alpha}(\epsilon, 0.05) \geq c  \frac{1}{\sqrt{\log n}}$ by Theorem 1 of \cite{luo2024adaptive}. 
	\item Case 2: $\alpha \in (0,0.05)$.   We prove the claim by a contradiction argument. Take $c$ to be the small constant in Proposition \ref{prop:test-lower-bound-gaussian}. Suppose $r_{\alpha}(0, 0.05) > c\left( \frac{1}{\sqrt{ \log(n/\log(1/\alpha)) }} + \frac{1}{\sqrt{\log(n/d)}} \right)$ does not hold, i.e., there exists a confidence set $\widehat{\CS}$ such that the following conditions hold
\begin{equation} \label{ineq:contradiction-condition}
	\inf_{ \epsilon \in [0, 0.05], \theta, Q} P_{\epsilon, \theta, Q}\left( \theta \in\widehat{\CS}\right) \geq 1-\alpha \quad \textnormal{ and } \quad \sup_{\theta} P_{\theta} \left( |\widehat{\CS}| \geq r \right) \leq \alpha,
\end{equation} with $r = c\left( \frac{1}{\sqrt{ \log(n/\log(1/\alpha)) }} + \frac{1}{\sqrt{\log(n/d)}} \right)$. Then by Lemma \ref{prop:CI-imply-test-new}, we can design a test $\phi$ using this $\widehat{\CS}$ with the following guarantee
\begin{equation} \label{ineq:typeI+II}
\begin{split}
	 P_{0} \left(\phi = 1  \right) +\sup_{Q, u\in\cS^{d-1}} P_{\epsilon_{\max}, ru, Q } \left(\phi = 0  \right)  \leq  3 \alpha.
\end{split}
\end{equation}

On the other hand, by Proposition \ref{prop:test-lower-bound-gaussian}, we have the following lower bound for solving the testing problem \eqref{eq:test-def} when $\alpha \in (0,0.05)$, 
	\begin{equation} \label{ineq:neyman-person}
		\begin{split}
		\inf_{\phi} \left\{ P_{0 } \left( \phi = 1 \right) + \sup_{Q,  u\in\cS^{d-1} } P_{\epsilon_{\max},  ru , Q } \left( \phi = 0  \right) \right\} \geq 4\alpha.
		\end{split}
	\end{equation}
	This contradicts \eqref{ineq:typeI+II}. Thus, the assumption does not hold, and the proof is complete.

\end{itemize}

\subsection{Proof of Lemma \ref{prop:CI-imply-test-new}}
 We start with the Type-1 error guarantee for $\phi$.
 \begin{equation*}  
		\begin{split}
			& P_{ 0 } \left( \phi = 1  \right) \overset{(a)}= P_{ 0 } \left( \sup_{u\in\cS^{d-1} }  \indi \left( ru  \in  \widehat{\CS}\right) = 1   \right) \\
			& =P_{ 0 } \left( \exists u\in\cS^{d-1} ,ru  \in  \widehat{\CS}  \right)  \\
			& \leq P_{0} \left(\exists u\in\cS^{d-1} ,ru  \in  \widehat{\CS}, 0 \in \widehat{\CS}\right) + P_{0} \left( 0 \notin  \widehat{\CS}\right) \\
			& \overset{(b)}\leq P_{0} \left( |\widehat{\CS}| \geq r\right) + P_{0} \left( 0 \notin  \widehat{\CS}\right)  \\
			& \leq \alpha + P_{0} \left( |\widehat{\CS}| \geq r\right) \\
			& \leq 2 \alpha,
		\end{split}
	\end{equation*}
where (a) is by the definition of $\phi$ and in (b), we use the definition of a diameter of a set $\widehat{\CS}$. The penultimate inequality is by the coverage property of $\widehat{\CS}$ and the last inequality is by the diameter guarantee of $\widehat{\CS}$ at $\epsilon = 0$. 

Next, we bound the Type-2 error of $\phi$.	
\begin{equation*}
	\begin{split}
		 & \sup_{Q, u\in\cS^{d-1} } P_{\epsilon_{\max}, ru , Q } \left( \phi = 0  \right)  \\
		 = & \sup_{Q, u\in\cS^{d-1} } P_{\epsilon_{\max}, ru, Q } \left( \sup_{u' \in \cS^{d-1}}  \indi \left( ru'  \in  \widehat{\CS}\right) = 0 \right)  \\
		 \leq & \sup_{Q, u\in\cS^{d-1} } P_{\epsilon_{\max}, ru, Q } \left( ru  \notin  \widehat{\CS}\right) \\
		 \leq &\alpha,
	\end{split}
\end{equation*}	where the equality is by the definition of $\phi$ and the last inequality is by the coverage guarantee of $\widehat{\CS}$.

\subsection{Proof of Proposition \ref{prop:test-lower-bound-gaussian}}

We can take $c$ small enough so that $r \leq 1$. Recall that the testing problem we are interested in is
	\begin{equation} \label{eq:simplied-testing}
	\begin{split} &\begin{array}{l}
		H_{0}: P = N(0,I_d) \quad  \textnormal { vs } \quad H_{1}: P \in \left\{ P_{\epsilon_{\max}, ru, Q}: Q, u \in \cS^{d-1} \right\},
	\end{array}
	\end{split}
\end{equation} where $\epsilon_{\max} = 0.05$. As we mentioned in Section \ref{sec:optimality-of-ARCI}, we will construct an NGCA to demonstrate the hardness of the testing problem. Specifically, for any $u \in \cS^{d-1}$, we construct $Q_{u}$ as follows
	\begin{equation*}
		\begin{split}
			Q_{u}(x) = \frac{\phi_d(x) - (1 - \epsilon_{\max}) \phi_d (x - ru)\cdot \frac{\indi( | x^\top u - r | \leq t )}{\bbP(|N(0,1)| \leq t)} }{\epsilon_{\max} }, \quad \forall x \in \bbR^d,
		\end{split}
	\end{equation*} where $\phi_d(x) = (2 \pi)^{-d/2} \exp\left( - \frac{1}{2} \|x\|_2^2  \right) $ denotes the probability density function of $N(0, I_d)$ and 
    $t \geq 4$ is some value to be chosen later. It is easy to check $\int_{x \in \bbR^d} Q_{u}(x) dx = 1$. To ensure $Q_{u}$ is a valid distribution, we also need
	\begin{equation} \label{eq:valid-construction-condition}
		\begin{split}
			& \frac{(1 - \epsilon_{\max} ) \phi_d(x - ru) \indi( | x^\top u - r | \leq t ) }{\phi_d(x)\bbP(|N(0,1)| \leq t)} \leq 1, \quad  \forall x \in \bbR^d \\
			\Longleftrightarrow & (1 - \epsilon_{\max} ) \exp(r x^\top u - r^2/2) \indi( | x^\top u - r | \leq t ) \leq \bbP(|N(0,1)| \leq t), \quad  \forall x \in \bbR^d \\
			\Longleftarrow &  (1 - \epsilon_{\max} ) \exp(r (r + t) - r^2/2) \leq \bbP(|N(0,1)| \leq t)\\
			\overset{(a)}\Longleftarrow & (1 - \epsilon_{\max} ) \exp(2rt) \leq 1 - \bbP(|N(0,1)| \geq t)\\
			 \overset{(b)}\Longleftarrow  & (1 - \epsilon_{\max} ) \exp(2rt) \leq 1 - \epsilon_{\max}/2\\
			\Longleftarrow & r \leq \frac{\log ((1 - \epsilon_{\max}/2)/(1 - \epsilon_{\max} )) }{2t},
		\end{split}
	\end{equation} where (a) is because $2t \geq r$ and (b) is because 
	$P(|N(0,1)| \geq t) \leq \frac{2}{\sqrt{2\pi}} \cdot \frac{1}{t} \exp(-t^2/2) \leq \epsilon_{\max}/2$ as $t \geq 4$.
	
	By construction, we know
		\begin{equation*}
		\begin{split}
			P_{ \epsilon_{\max}, ru, Q_u }(x) 
			& = \phi_d(x) + (1 - \epsilon_{\max}) \phi_d(x - ru) \left(1  - \frac{ \indi( |x^\top u - r| \leq t )}{\bbP(|N(0,1)| \leq t)} \right)\\
			& = A(x^\top u)\exp( - \| x - (u^\top x) u\|_2^2/2 )/(2\pi)^{(d-1)/2} \;,
		\end{split}
	\end{equation*} 
    where $A$ is a distribution on $\bbR$ with the following density function
	\begin{equation*}
		\begin{split}
			A(x)  = \phi(x) + (1 - \epsilon_{\max} ) \phi(x - r) \left(1 -  \frac{\indi( |x - r| \leq t )}{\bbP(|N(0,1)| \leq t)} \right).
		\end{split}
	\end{equation*} Therefore, $P_{ \epsilon_{\max}, ru, Q_u }$ is a high-dimensional hidden direction distribution $P_u^A$ as defined in Definition \ref{def:high-dim-distribution}. Thus, the testing problem \eqref{eq:simplied-testing} is at least as hard as the NGCA testing problem \eqref{eq:NGCA-final}. It is easy to check that $
		\int x A(x) dx = 0$. Then Lemma \ref{lm:NGCA-stat} suggests that as long as	
		\begin{equation} \label{ineq:chi-square-desired-bound}
		 \chi^2(A, N(0,1)) \leq  \frac{d + \log(1/\alpha)}{8n},
	\end{equation} we can obtain the desired result by observing that minimax risk is lower bounded by the Bayesian risk.

	For the rest of the proof, we will first bound $\chi^2(A, N(0,1))$, and then show that when $r \leq c \left( \frac{1}{\sqrt{ \log(n/\log(1/\alpha)) }} +\frac{1}{\sqrt{\log (n/d)}}\right)$ holds, then we can choose $t$ properly so that the constructed $Q_{u}$ is a valid distribution, i.e., \eqref{eq:valid-construction-condition} holds, and at the same time  \eqref{ineq:chi-square-desired-bound} holds.
	
			Since $r\in [0,1]$, we have 
			\begin{equation} \label{ineq:chi-square-bound}
		\begin{split}
			 &\chi^2(A, N(0,1)) \\
			&= \int \frac{\left( \phi(x) + (1 - \epsilon_{\max} ) \phi(x - r) \left(1 -  \frac{  \indi( |x - r| \leq t )}{\bbP(|N(0,1)| \leq t)} \right) - \phi(x) \right)^2}{\phi(x)} dx \\
			&= \int_{|x - r| \leq t} \frac{(1 - \epsilon_{\max})^2 \phi^2(x - r)  }{\phi(x)} \left (\frac{1}{\bbP(|N(0,1)| \leq t)} - 1\right)^2 dx + \int_{|x - r| > t} \frac{(1 - \epsilon_{\max})^2 \phi^2(x - r)}{\phi(x)} d x\\
			& = \left (\frac{1}{\bbP(|N(0,1)| \leq t)} - 1\right)^2\int_{|x - r| \leq t}  \frac{(1 - \epsilon_{\max} )^2}{\sqrt{2\pi}}  \exp(r^2) \exp(-(x-2r)^2/2) dx  \\
			& \quad + \int_{|x - r| > t} \frac{(1 - \epsilon_{\max} )^2}{\sqrt{2\pi}} \exp(r^2) \exp(-(x-2r)^2/2) dx \\
			& \overset{(a)}\leq \left (\frac{1}{\bbP(|N(0,1)| \leq t)} - 1\right)^2  (1 - \epsilon_{\max} )^2  \exp(r^2)  +  \frac{(1 - \epsilon_{\max} )^2}{\sqrt{2\pi}}  \exp(r^2) \int_{|x - r| > t} \exp(-(x-2r)^2/2) dx \\
			& \overset{(b)}\leq \left (\frac{1}{\bbP(|N(0,1)| \leq t)} - 1\right)^2  \frac{(1 - \epsilon_{\max} )^2}{\sqrt{2\pi}}  \exp(r^2)  +  2\frac{(1 - \epsilon_{\max} )^2}{\sqrt{2\pi}}  \exp(r^2) \exp(-t^2/8) \\
			& \overset{(c)}\leq (1 - \epsilon_{\max} )^2  \exp(r^2) \exp(-t^2) + 2\frac{(1 - \epsilon_{\max} )^2}{\sqrt{2\pi}}  \exp(r^2) \exp(-t^2/8)\\
			& \leq (1 + 2/\sqrt{2\pi}) (1 - \epsilon_{\max} )^2  \exp(r^2) \exp(-t^2/8) \leq 2 (1 - \epsilon_{\max} )^2  \exp(r^2) \exp(-t^2/8)
		\end{split}
	\end{equation} where $(a)$ is because $\int_{|x-r| \leq t} \frac{1}{\sqrt{2\pi}} \exp(-(x-2r)^2/2) dx \leq 1$; (b) is because by a change of variable, we have
	\begin{equation*}
		\begin{split}
			\int_{|x - r| > t} \exp(-(x-2r)^2/2) dx & = \int_{x > t - r} \exp(-x^2/2) dx + \int_{x < -t -r} \exp(-x^2/2) dx \\
			& \leq 2\int_{x > t - r} \exp(-x^2/2) dx \overset{(i)}\leq 2\int_{x > t/2} \exp(-x^2/2) dx  \\
			& \leq \frac{4}{t} \exp(-t^2/8) \overset{(i)}\leq 2\exp(-t^2/8), 
		\end{split}
	\end{equation*} where (i) is because $t \geq 2 \geq 2r$; finally (c) is because
	\begin{equation*}
		\begin{split}
			\frac{1}{\bbP(|N(0,1)| \leq t)} - 1=\frac{P(|N(0,1)| \geq t)}{1 - P(|N(0,1)| \geq t)} \leq 2 P(|N(0,1)| \geq t) \leq \frac{4}{\sqrt{2\pi}} \cdot \frac{1}{t} \exp(-t^2/2) \leq \exp(-t^2/2) 
		\end{split}
	\end{equation*}
as $t \geq 2$ and $P(|N(0,1)| \geq t) \leq \frac{2}{\sqrt{2\pi}} \cdot \frac{1}{t} \exp(-t^2/2)$.

In view of \eqref{ineq:chi-square-bound} and \eqref{ineq:chi-square-desired-bound}, the condition we have on $t$ reduces to 
\begin{equation*}
	\begin{split}
		2 (1 - \epsilon_{\max} )^2  \exp(r^2) \exp(-t^2/8) \leq \frac{d + \log(1/\alpha)}{8n} \Longleftarrow t= \sqrt{8 \log \left( \frac{16e (1 -\epsilon_{\max} )^2 n }{d+ \log(1/\alpha)} \right) },
	\end{split}
\end{equation*} where the implication is because $r \leq 1$. Since $n \gtrsim d + \log(1/\alpha)$, the choice of $t$ above is bigger than $4$, which is required in the previous analysis.

	Then by \eqref{eq:valid-construction-condition}, for the above choice of $t$, the condition we need to guarantee $Q_u$ is a valid distribution is 
	\begin{equation} \label{ineq:r-condition2}
		r \leq \frac{\log ((1-\epsilon_{\max}/2)/(1 - \epsilon_{\max} )) }{2 \sqrt{8 \log \left( \frac{16e (1 -\epsilon_{\max} )^2 n }{d+ \log(1/\alpha)} \right) }}.
	\end{equation} Note that the above condition can be satisfied since $r \leq c \left( \frac{1}{\sqrt{ \log(n/\log(1/\alpha)) }} +\frac{1}{\sqrt{\log (n/d)}}\right)$ for some small enough $c > 0$. This finishes the proof of this proposition.

\subsection{Proof of Lemma \ref{lm:NGCA-stat}}
First, by the Neyman-Pearson lemma
\begin{equation*}
		\begin{split}
			&\inf_{\phi} \left\{ P_0(\phi = 1) + P_{\mu}(\phi = 0) \right\} 
		\geq  1 - \TV(P_{\mu}(\{X_i\}_{i=1}^n), P_{0}(\{X_i\}_{i=1}^n)),
		\end{split}
	\end{equation*} where $\TV(P,Q):= \sup_B|P(B)-Q(B)|$ denotes the total variation distance between two probability distributions $P$ and $Q$. Therefore, to show the result, it is enough to show that when $ \chi^2(A, N(0,1)) \leq \frac{d + \log(1/\alpha)}{8n} $, $ \TV(P_{\mu}(\{X_i\}_{i=1}^n), P_{0}(\{X_i\}_{i=1}^n)) \leq 1 - 4 \alpha$.

Next, we divide the proof into two cases: (i)  $ \chi^2(A, N(0,1)) \leq \frac{\log(1/\alpha)}{4n} $ and (ii) $ \chi^2(A, N(0,1)) \leq \frac{d}{4n} $. We will show that in both cases, we have $ \TV(P_{\mu}(\{X_i\}_{i=1}^n), P_{0}(\{X_i\}_{i=1}^n)) \leq 1 - 4 \alpha$.
	
	\vskip.3cm
	{\bf \noindent Case 1: $ \chi^2(A, N(0,1)) \leq \frac{\log(1/\alpha)}{4n} $.} In this case,
	\begin{equation} \label{ineq:second-moment-calculation1}
		\begin{split}
			& \TV(P_{\mu}(\{X_i\}_{i=1}^n), P_{0}(\{X_i\}_{i=1}^n))  =  \TV\left ( \int_{u \in \cS^{d-1} } P^A_u(\{X_i\}_{i=1}^n) d\mu(u), P_{0}(\{X_i\}_{i=1}^n) \right)  \\
			&\overset{(a)} \leq\int_{u \in \cS^{d-1} } \TV\left ( P^A_u(\{X_i\}_{i=1}^n) , P_{0}(\{X_i\}_{i=1}^n) \right) d \mu(u)\\
			& \overset{(b)}\leq 1 - \frac{1}{2} \int_{u \in \cS^{d-1} } \exp(-\KL(P^A_u(\{X_i\}_{i=1}^n), P_0(\{X_i\}_{i=1}^n)) ) d\mu(u) \\
			&= 1 -\frac{1}{2}\int_{u \in \cS^{d-1} } \exp(-n \KL(P^A_u(X), P_0(X))) d\mu(u) \\
			& \leq 1 - \frac{1}{2}\int_{u \in \cS^{d-1} } \exp(-n \chi^2(P^A_u(X), P_0(X))) d\mu(u) = 1 - \frac{1}{2} \exp(-n \chi^2(A, N(0,1) ))  \\
			& \overset{(c)}\leq 1 - \frac{\alpha^{1/4}}{2} \overset{(d)}\leq 1 - 4 \alpha \;,
		\end{split}
	\end{equation} where (a) is by the convexity of the $\TV(P,Q)$ in $(P,Q)$ pair; in (b), $\KL(P,Q) := \int \log(dP/dQ) dP$ denotes the Kullback-Leibler divergence between two probability distributions $P$ and $Q$ and we use Lemma 2.6 in \cite{tsybakov2009introduction}; (c) is because $\chi^2(A, N(0,1)) \leq \frac{\log(1/\alpha)}{4n} $; (d) is because  $\alpha \in (0,0.05)$.
		
		\vskip.3cm
	{\bf \noindent Case 2: $ \chi^2(A, N(0,1)) \leq \frac{d}{4n} $.} The proof for this case largely follows from the proof of Proposition 7.1 of \cite{diakonikolas2017statistical} where $\alpha$ is treated as a fixed constant. Here we present a proof with an explicit dependence on $\alpha$. First, we introduce the following facts, which are from \cite{diakonikolas2017statistical} Fact 7 and Lemma 7.3:
	\begin{itemize}
	\item (Fact 1) For $x > -1$, $y > -1$ we have that $\int_{0}^{\pi/2} \sin^{x}(\theta)\cos^{y}(\theta)\,d\theta
= \frac{1}{2} B\!\left(\frac{x+1}{2}, \frac{y+1}{2}\right)$, where $B(x,y)$ is the Beta function.
		\item (Fact 2) For all $x,y \in \mathbb{R}$, we have that $B(x,y+1) = B(x,y)\cdot \frac{y}{x+y}$.		\item (Fact 3) If we choose $u$ and $u'$ uniformly at random from $\cS^{d-1}$, the angle $\theta$
between them is distributed with the probability density function $\frac{\sin^{d-2}(\theta)}{B\!\left((d-1)/2,\,1/2\right)}$.
	\end{itemize}

	 Then, we have
	\begin{equation} \label{ineq:second-moment-calculation}
		\begin{split}
			&\bbE_{P_0} [(P_{\mu}(\{X_i\}_{i=1}^n)/P_0(\{X_i\}_{i=1}^n))^2 ] \\
			&= \bbE_{u, u' \overset{iid}\sim \textnormal{Unif}(\cS^{d-1}) } \left[ \int \frac{P_{ \epsilon_{\max}, ru, Q_{u} }(x)  P_{ \epsilon_{\max}, ru', Q_{u'} }(x) }{\phi_d(x)} dx \right]^n \\
			& \overset{(a)}\leq  \bbE_{u, u' \overset{iid}\sim \textnormal{Unif}(\cS^{d-1}) } \left[1 + (u^\top u')^2 \chi^2(A, N(0,1)) \right]^n \\
&\overset{\text{Fact 3}}= 2 \int_{0}^{\pi/2}
\left(1 + \cos^{2}(\theta)\chi^{2}(A,N(0,1))\right)^{n}
\frac{\sin^{d-2}(\theta)}{B\!\left((d-1)/2,1/2\right)} \, d\theta \\
&= \sum_{i=0}^{n} \binom{n}{i} 2 \int_{0}^{\pi/2}
[\chi^{2}(A,N(0,1))]^{i}
\cos^{2i}(\theta)\sin^{d-2}(\theta)
\frac{d\theta}{B\!\left((d-1)/2,1/2\right)} \\
&\overset{\text{Fact 1} }= \sum_{i=0}^{n} \binom{n}{i}
[\chi^{2}(A,N(0,1))]^{i}
\frac{B\!\left((d-1)/2,\, i+1/2\right)}{B\!\left((d-1)/2,\,1/2\right)} \;,
		\end{split} 
	\end{equation} where (a) is by the fact $
		\int x A(x) dx = 0$ and Lemma \ref{lm:NGCA}.

We rewrite the above sum as $\sum_{i=0}^{n} b_i$, where
$
b_i
=
\binom{n}{i}
\chi^2(A, N(0,1))^i
\frac{B\bigl((d-1)/2,\, i+1/2\bigr)}{B\bigl((d-1)/2,\, 1/2\bigr)} .
$ Note that $b_0 = 1$ and 
\[
\frac{b_{i+1}}{b_i}
=
\chi^2(A, N(0,1))
\cdot
\frac{n-i}{i+d/2}
\cdot
\frac{i+1/2}{i+1} \;,
\] by Fact 2. Therefore, $b_1 = \frac{n}{d}\chi^2(A, N(0,1)) $ and for all $i \geq 1$, we have
\[
\frac{b_{i+1}}{b_i}
\le
\chi^2(A, N(0,1)) \cdot \frac{n}{d/2}
=
\frac{2n\,\chi^2(A, N(0,1))}{d}.
\] So if $\chi^2(A, N(0,1)) \leq \frac{d}{4n}  $, we have 
\begin{equation} \label{ineq:LR-square-bound}
\begin{split}
	\bbE_{P_0} [(P_{\mu}(\{X_i\}_{i=1}^n)/P_0(\{X_i\}_{i=1}^n))^2 ] & \leq \sum_{i=0}^n b_i \leq 1 + \frac{n\,\chi^2(A, N(0,1))}{d} \left(1 + 1/2 + 1/4 + \cdots \right) \\
	& \leq 1 + \frac{2n\,\chi^2(A, N(0,1))}{d}.
\end{split}
\end{equation} Then 
	 \begin{equation*} 
 	\begin{split}
 		 \TV(P_{\mu}(\{X_i\}_{i=1}^n), P_{0}(\{X_i\}_{i=1}^n)) &= \frac{1}{2} \bbE_{\bbP_0}( | P_\mu(\{X_i\}_{i=1}^n)/P_0(\{X_i\}_{i=1}^n) - 1   | ) \\
 		 &\leq\frac{1}{2}  \left( \bbE_{\bbP_0} \left(  ( P_\mu(\{X_i\}_{i=1}^n)/P_0(\{X_i\}_{i=1}^n) -1  )^2 \right) \right)^{1/2} \\
 		 &= \frac{1}{2} \sqrt{ \bbE_{P_0} [(P_{\mu}(\{X_i\}_{i=1}^n)/P_0(\{X_i\}_{i=1}^n))^2 ]-1 } \\
 		 & \overset{ \eqref{ineq:LR-square-bound} }\leq \frac{1}{2} \sqrt{\frac{2n\,\chi^2(A, N(0,1))}{d}} \overset{(a)} \leq \frac{1}{2 \sqrt{2} } \overset{(b)} \leq 1 - 4 \alpha,
 	\end{split}
 \end{equation*}	 where (a) is because $\chi^2(A, N(0,1)) \leq \frac{d}{4n}  $ and (b) is because $\alpha \in (0,0.05)$.

\section{Proofs in Section \ref{sec:GSQlow}}

\subsection{Proof of Theorem \ref{th:SQ-lower-bound}}

We proceed by contradiction. 
Suppose $\wh{\CS}\in \mathrm{SQ}(k,\tau)$ is an SQ confidence set with diameter $r =\frac{c_1}{\sqrt{m}}$ at contamination level $\epsilon \in [0,0.05]$. Then by definition, it satisfies 
\begin{equation*}
	\begin{split}
		& \inf_{\theta} \inf_{ P \in \cP_{\epsilon_{\max},\theta} } \,  \,\inf_{\mathtt{STAT}_{P,\tau} \in \mathtt{Oracle}_{P,\tau} } \bbP\left( \theta \in \widehat{\CS}(\mathtt{STAT}_{P,\tau})\right) \geq 0.95,\\
		&  \inf_{\mathtt{STAT}_{P_0,\tau} \in \mathtt{Oracle}_{P_0,\tau} } \bbP\left(  | \wh{\CS}(\mathtt{STAT}_{P,\tau})| \leq \frac{c_1}{\sqrt{m}} \right) \geq 0.95,
	\end{split}
\end{equation*} where $P_0 = N(0,I_d)$.
Then by Lemma \ref{lm:SQ-CS-testing}, we know that there exists an SQ test $\phi \in \mathrm{SQ}(k,\tau)$ that solves the testing problem \eqref{eq:SQ-test-CI} with separation $r = \frac{c_1}{\sqrt{m}} $. Next, we show that this implies one of the conditions in \eqref{ineq:SQ-option} holds by Lemma \ref{lm:SQ-NGCA}.

As we mentioned in Section \ref{sec:GSQlow}, we will show that a special case of \eqref{eq:SQ-test-CI} is an NGCA testing problem \eqref{eq:NGCA-SQ}. In particular, 
by applying Lemma \ref{lm:population-existence-analysis} with $\delta = \epsilon_{\max}$, it follows that given any $r \in [0, \epsilon_{\max}^2]$ and any nonnegative integer $m' \leq \frac{\epsilon_{\max}^2}{256 r^2}$, we can construct a valid distribution $D$ supported on $\bbR^d$ such that 
\begin{equation*}
	D \geq (1 - \delta) N(ru, I_d) \quad \text{and}\quad D \equiv_{m'} N(0, I_d).
\end{equation*} We note that implicitly the construction in Lemma \ref{lm:population-existence-analysis} is the same one as in \eqref{eq:SQ-con}-\eqref{eq:A-NGCA-SQ}. The existence of such a $D$ means that there exists a $Q_u$ such that $P_{\delta, ru, Q_u} \equiv_{m'} N(0, I_d)$. We can take $m'$ to be $m$ as long as $c_1$ is sufficiently small. Then the conclusion follows by applying Lemma \ref{lm:SQ-NGCA}. This completes the proof.

\subsection{Proof of Lemma \ref{lm:SQ-CS-testing}}

	Suppose $\wh{\CS} \in \mathrm{SQ}(k,\tau)$ is an SQ confidence set with $k$ queries, tolerance $\tau$ and diameter $r$. Given any SQ oracle $\mathtt{STAT}_{P,\tau}$, let us construct the test $\phi(\mathtt{STAT}_{P,\tau}) = \sup_{u\in\cS^{d-1} } \indi(ru \in \wh{\CS}(\mathtt{STAT}_{P,\tau}))$ for solving the testing problem \eqref{eq:SQ-test-CI} given separation $r$. Notice that $\phi \in \mathrm{SQ}(k,\tau)$ is an SQ test with $k$ queries of tolerance $\tau$. This is because $\phi$'s interaction with the SQ oracle is only through the $k$ queries with tolerance $\tau$ used to construct $\wh{\CS}$. Then, following the same analysis as in Lemma \ref{prop:CI-imply-test-new}, we have that this SQ test $\phi$ satisfies
	\begin{equation*}
    \begin{split}
      & \sup_{\mathtt{STAT}_{P_0,\tau} \in \mathtt{Oracle}_{P_0,\tau} } \bbP\left(\phi(\mathtt{STAT}_{P_0,\tau})=1\right) \leq  0.1\\
       & \sup_{u\in\cS^{d-1} ,P \in \cP_{\epsilon_{\max},ru}}\, \sup_{\mathtt{STAT}_{P,\tau} \in \mathtt{Oracle}_{P,\tau} } \bbP\left(\phi(\mathtt{STAT}_{P,\tau})=0\right)\leq 0.05.  
    \end{split}
	\end{equation*} This finishes the proof.

\section{Proofs in Section \ref{sec:pop} } \label{sec:proof-pop}

\subsection{Proof of Lemma \ref{lm:hat-epsilon-guarantee} }

	We note that the result can be directly deduced from Theorem \ref{th:hat-epsilon-guarantee} by observing that $\wt{\epsilon}$ in \eqref{ineq:hat-epsilon-relax} is upper bounded by $\wt{\epsilon}$ in \eqref{ineq:hat-epsilon}. Here we provide a simple self-contained proof of this result. Notice that since $\|\wh{\theta}- \theta\|  \leq 1$, if $\wt{\epsilon} > 0.01$, then we automatically have $\|\wh{\theta}- \theta\|  \leq 10\sqrt{\frac{10}{\log(1/\wt{\epsilon})}} $. Therefore, for the rest of the proof, we focus on $\wt{\epsilon} \in [0,0.01]$. Denote $r = \|\wh{\theta}- \theta\|$. When $\wh{\theta} = \theta$, the statement clearly holds. Now let us focus on $\wh{\theta} \neq \theta$. We are going to show that 
	\begin{equation} \label{ineq:wttheta-statement}
		\text{For all } \delta \in [0,0.01],\quad  r > 10\sqrt{\frac{10}{\log(1/\delta)}} \Longrightarrow   P_{\epsilon, \theta, Q} \geq (1-\delta) N(\wh{\theta}, I_d) \text{ does not hold}.
	\end{equation} If we can show the above statement, then by the definition of $\wt{\epsilon}$, we have $ r \leq 10\sqrt{\frac{10}{\log(1/\wt{\epsilon})}}$, which is the desired result.
	
	The following is a structural result we will leverage, and it will also be useful in the proof of Theorem \ref{th:hat-epsilon-guarantee}. 
	\begin{Lemma} \label{lm:structural-result}
		Given any probability measures $P_1, P_2, Q$ supported on the common measurable space $(\Omega, \cF)$, and any function class $\cH$ consisting of functions defined on $\Omega$. For any $\delta \in [0,1/4],\epsilon \in [0,1/4]$, if there exists a probability distribution $D$ on $(\Omega, \cF)$ such that
		\begin{equation*}
			(i) \, \, (1 - \epsilon) P_1 + \epsilon Q \geq (1 - \delta) D \quad \text{ and }\quad (ii)\,\, \int_{\Omega} h(x) d D(x) = \int_{\Omega} h(x) d P_2 (x) \text{ for all } h \in \cH,
		\end{equation*} then for any nonnegative $h \in \cH$, 
		\begin{equation*}
			\int_{\Omega} h(x) d P_2 (x) \geq \frac{2}{3} \inf \left\{ \int_{\Omega} h(x) dD' (x): D' \text{ is a distribution such that } P_1 \geq \left( 1- \frac{4\delta}{3} \right)D' \right\}.
		\end{equation*}
	\end{Lemma}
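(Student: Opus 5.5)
We will prove Lemma \ref{lm:structural-result} by decomposing $D$ into the part that comes from $P_1$ and the part that comes from $Q$, and then renormalizing the $P_1$-part.

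\textbf{Step 1: decompose $D$.} From (i), the measure $(1-\delta)D$ is dominated by $(1-\epsilon)P_1+\epsilon Q$. Let $g=\frac{(1-\delta)\,dD}{d\left((1-\epsilon)P_1+\epsilon Q\right)}\in[0,1]$. We split
\[
(1-\delta)D=\mu_1+\mu_2,\qquad \mu_1=g\,(1-\epsilon)P_1\le (1-\epsilon)P_1,\qquad \mu_2=g\,\epsilon Q\le \epsilon Q .
\]
Then $\mu_2(\Omega)\le\epsilon$, and the mass of $\mu_1$ is
\[
\beta:=\mu_1(\Omega)\ge (1-\delta)-\epsilon .
\]

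\textbf{Step 2: renormalize the $P_1$-part.} Set $D'=\mu_1/\beta$. Then $P_1\ge \frac{\beta}{1-\epsilon}D'$, so $D'$ is admissible for the infimum in the lemma provided
\[
\frac{\beta}{1-\epsilon}\ge 1-\frac{4\delta}{3}.
\]

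\textbf{Step 3: transfer the bound on $h$.} For nonnegative $h\in\cH$, condition (ii) and $\mu_2\ge 0$ give
\[
\int h\,dP_2=\int h\,dD\ge\frac{1}{1-\delta}\int h\,d\mu_1=\frac{\beta}{1-\delta}\int h\,dD'.
\]
Since $\beta\ge 1-\delta-\epsilon$ and $\epsilon,\delta\le 1/4$, we get $\beta/(1-\delta)\ge 1-\frac{\epsilon}{1-\delta}\ge 2/3$. This yields the factor $2/3$, and the conclusion follows once $D'$ is admissible.

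\textbf{Main obstacle: admissibility of $D'$.} The inequality $\beta/(1-\epsilon)\ge 1-4\delta/3$ is the delicate point. The bound $\beta\ge 1-\delta-\epsilon$ only gives ratio $\ge (1-\delta-\epsilon)/(1-\epsilon)=1-\delta/(1-\epsilon)\ge 1-4\delta/3$, using $\epsilon\le 1/4$. So it does hold, with the constant $4/3=1/(1-1/4)$ coming exactly from $\epsilon\le 1/4$.

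The remaining care is measure-theoretic. The density $g$ is taken with respect to the mixture, so $\mu_1\le(1-\epsilon)P_1$ holds even without absolute continuity of $D$ with respect to $P_1$. The case $\beta=0$ cannot occur because $\beta\ge 1/2$. If $\int h\,dD'$ could be infinite, the inequality is trivial.
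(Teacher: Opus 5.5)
Your proof is correct and follows the same route as the paper. The paper also writes $D=(1-a)D_1+aD_2$ with $(1-\epsilon)P_1\ge(1-\delta)(1-a)D_1$ and $\epsilon Q\ge(1-\delta)aD_2$, drops the $Q$-part using $h\ge 0$, bounds $a\le\epsilon/(1-\delta)$ to obtain the factor $2/3$, and uses $\delta/(1-\epsilon)\le 4\delta/3$ for admissibility; your Radon--Nikodym split into $\mu_1,\mu_2$ just makes explicit the decomposition step that the paper states as an equality of infima.
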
 The proof of Lemma \ref{lm:structural-result} will be provided in the subsequent subsection. We next prove \eqref{ineq:wttheta-statement} by  contradiction. Given any $\delta \in [0,0.01]$, $r > 10\sqrt{\frac{10}{\log(1/\delta)}}$, and suppose $P_{\epsilon, \theta, Q} \geq (1-\delta) N(\wh{\theta}, I_d)$ holds. Define $\cH = \{ \indi(x^\top u \geq t) : u\in\cS^{d-1}, t \in \bbR  \}$. Then, if we set $P_1 = N(\theta - \wh{\theta}, I_d)$, $D = N(0, I_d)$ and $P_2 =  N(0, I_d)$, we know the assumption in Lemma \ref{lm:structural-result} for $D$ is satisfied over the class $\cH$. 
    As a result, for $h(x) = \indi(x^\top u \geq t)$ with  $u = (\theta-\wh{\theta})/\|\wh{\theta}- \theta\| $ and $t = \Phi^{-1}(1 - \delta)$, 
    Lemma \ref{lm:structural-result} implies
	\begin{equation} \label{ineq:signal-lower-bound}
		\bbP_{N(0, I_d)}(X^\top u \geq t ) \geq \frac{2}{3} \inf \left\{ \bbP_{D'}(X^\top u \geq t):  N(\theta - \wh{\theta}, I_d) \geq  \left( 1- \frac{4\delta}{3} \right) D'   \right\}.
	\end{equation} Notice that for any distribution $D'$ such that $N(\theta - \wh{\theta}, I_d) \geq  \left( 1- \frac{4\delta}{3} \right) D'$, there exists a distribution $Q'$ such that $N(\theta - \wh{\theta}, I_d)  =  \left( 1- \frac{4\delta}{3} \right) D' + \frac{4\delta}{3} Q' $ and this implies that
	\begin{equation} \label{ineq:density-dominance-implication}
	\begin{split}
		& \left( 1- \frac{4\delta}{3} \right) \bbP_{D'}(A) +   \frac{4\delta}{3} \bbP_{Q'}(A) = \bbP_{N(\theta - \wh{\theta}, I_d)}(A) \text{ for all measurable sets } A \\
		\Longrightarrow & \bbP_{D'}(A) \geq   \bbP_{N(\theta - \wh{\theta}, I_d)}(A) - \frac{4\delta}{3} \text{ for all measurable sets } A.
	\end{split}
	\end{equation} 
    Then, by plugging \eqref{ineq:density-dominance-implication} into \eqref{ineq:signal-lower-bound} with $A = \{x:x^\top u \geq t\}$, we have 
	\begin{equation} \label{ineq:implication}
	\begin{split}
		\frac{3}{2} \bbP_{N(0, I_d)}(X^\top u \geq t ) \geq  \bbP_{N(\theta - \wh{\theta}, I_d)}(X^\top u \geq t ) - \frac{4\delta}{3} & \overset{(a)}\Longleftrightarrow \frac{3}{2}  \bbP_{N(0,1)}(X \geq t) + \frac{4\delta}{3} \geq \bbP_{N(0,1)}(X \geq t -r)\\
		&  \overset{(b)}\Longrightarrow  3 \bbP_{N(0,1)}(X \geq t)\geq \bbP_{N(0,1)}(X \geq t -r),
	\end{split}
	\end{equation} where (a) is because $u = (\theta-\wh{\theta})/\|\wh{\theta}- \theta\| $ and we set $r = \|\wh{\theta}- \theta\| $; (b) is because $t = \Phi^{-1}(1 - \delta)$.

	We claim that \eqref{ineq:implication} implies that $r \leq 2/t$. To see this, if $r > 2/t$, then 
	\begin{equation} \label{ineq:contradiction}
		\begin{split}
			\bbP_{N(0,1)}(X \geq t -r) &>  \bbP_{N(0,1)}(X \geq t -2/t)  = \int_{x \geq t} \phi(x-2/t) dx \\
			& \overset{(a)}\geq \int_{x \geq t} 3\phi(x) dx = 3\bbP_{N(0,1)}(X \geq t),
		\end{split}
	\end{equation} where (a) is because $$\phi(x-2/t)/\phi(x) = \exp(2x/t - 2/t^2) \geq  \exp(2 - 2/t^2) \geq  3$$ for all $x \geq t$ given that $\delta \in [0,0.01]$ and $t = \Phi^{-1}(1 - \delta)$. Notice that \eqref{ineq:contradiction} contradicts \eqref{ineq:implication}. Therefore, \eqref{ineq:implication} implies 
	\begin{equation*}
		r \leq \frac{2}{t} = \frac{2}{\Phi^{-1}(1 -  \delta)} \overset{(a)}\leq \frac{4}{\sqrt{\log(1/ \delta)}} \;,
	\end{equation*} 
    where (a) is by Lemma \ref{lm:normal-quantile} and $\delta \in [0,0.01]$. 
    This contradicts our assumption $r > 10\sqrt{\frac{10}{\log(1/\delta)}}$ and finishes the proof.

\subsubsection{Proof of Lemma \ref{lm:structural-result} } Let $D$ be the distribution in the assumption. Then for any nonnegative $h \in \cH$,
\begin{equation*}
	\begin{split}
	&\int_{\Omega} h(x) d P_2 (x) =  \int_{\Omega} h(x) d D(x) \\
		\geq & \inf \left\{ \int h(x) dD'(x) : (1 - \epsilon) P_1 + \epsilon Q \geq (1 - \delta) D' \right\} \\
		= & \inf \left\{ \int h(x) dD'(x): \begin{array}{c}
		 D' =  (1 - a) D_1 + a D_2 \text{ for two distributions } D_1, D_2 \\ \text{ and some } a \in [0,1] \text{ such that }\\
			 (1-\epsilon) P_1 \geq (1 - \delta) (1 - a) D_1,
			 \epsilon Q  \geq (1-\delta) a D_2
		\end{array}\right\} \\
		\overset{h \geq 0}\geq &  \inf \left\{ \int (1-a)h(x) dD_1(x): \begin{array}{c}
		D_1, D_2 \text{ are two distributions}, a \in [0,1] \text{ such that }\\
			 (1-\epsilon) P_1 \geq (1 - \delta) (1 - a) D_1,
			 \epsilon Q  \geq (1-\delta) a D_2
		\end{array}\right\}\\
		\overset{(a)}\geq & \inf \left\{ \int \left(1-\frac{\epsilon}{1 - \delta}\right)h(x)dD_1(x): \begin{array}{c}
		D_1 \text{ is a distribution}\text{ such that }\\
			 (1-\epsilon) P_1 \geq (1 - \delta) \left(1 - \frac{\epsilon}{1 - \delta}\right) D_1
		\end{array}\right\}\\
		\overset{(b)}\geq & \inf \left\{ \int \frac{2}{3} h(x) dD'(x): \begin{array}{c}
		D' \text{ is a distribution}\text{ such that } P_1 \geq \left(1 - \frac{\delta}{1 -\epsilon} \right)  D'
		\end{array}\right\}\\
		\overset{(c)}= &\inf \left\{ \int \frac{2}{3} h(x) dD'(x): \begin{array}{c}
		D' \text{ is a distribution}\text{ such that } P_1 \geq \left(1 - \frac{4\delta}{3} \right)  D'
		\end{array}\right\}
	\end{split}
\end{equation*}  where (a) is because $ \epsilon Q  \geq (1-\delta) a D_2$ implies $a \leq \frac{\epsilon}{1 - \delta}$; (b) and (c) are due to $\delta, \epsilon \in [0,1/4]$.

\subsection{Proof of Theorem \ref{th:hat-epsilon-guarantee}}


For notational convenience, we denote $r=\| \wh{\theta} - \theta \|_2$.

\vskip.3cm
{\noindent \bf (Part 1: show $r \leq \sqrt{ \frac{1000}{\log(1/\wt{\epsilon})} } $)}. First, when $\wh{\theta} = \theta$, the statement clearly holds. Thus, we focus on $r \in (0,1]$. Without loss of generality, we can assume $\wt{\epsilon} \leq \exp(-1000)$, otherwise the inequality $\exp(- \frac{1000}{r^2}) \leq \wt{\epsilon}$ clearly holds. 

Recall that \(m_\delta\) is the largest even integer not exceeding \(\log(1/\delta)/9\). 
By simple manipulation, to prove the claim, it suffices to show 
	that $r \leq \frac{8}{\sqrt{ m_{\wt{\epsilon}} }}$ given $\wt{\epsilon} \leq \exp(-1000)$.
	 Let us define
 	\begin{equation} \label{eq:program-P2}
		\begin{split}
			\texttt{F}(\delta) = \left\{D: P_{\epsilon, \theta - \wh{\theta}, Q} \geq (1-\delta) D \text{ and }D\equiv_{m_\delta}N(0,I_d)\right\}.
		\end{split}
	\end{equation} It is easy to check that $\wt{\epsilon} = \inf\{\delta \in [0,1]: \texttt{F}(\delta) \neq \emptyset \}$.

	 Next we will show that:
\begin{equation} \label{eq:coverage-population}
	\text{For all } \delta \in [0, \exp(-1000)], \quad r > \frac{8}{\sqrt{ m_{\delta} }} \Longrightarrow\texttt{F}(\delta) = \emptyset .
\end{equation}
Therefore, $\texttt{F}({\wt{\epsilon}}) \neq \emptyset$ implies that $r\leq \frac{8}{\sqrt{ m_{\wt{\epsilon}} }}$. Now we proceed to prove \eqref{eq:coverage-population}. For notational convenience, for the rest of the proof in Part 1, 
we are going to write $m_{\delta}$ as $m$, while we should keep in mind that it implicitly depends on $\delta$.

We prove \eqref{eq:coverage-population} by contradiction. 
Given any $ \delta \in [0, \exp(-1000)]$ and $ r > \frac{8}{\sqrt{ m }}$, 
suppose that $\texttt{F}(\delta) \neq \emptyset$. 
Taking $\cH =  \cP_{d,m_{\delta}}$, $P_1 = N(\theta - \wh{\theta}, I_d)$ 
and $P_2 = N(0, I_d)$, we know that there exists a $D$ such that 
the assumption in Lemma \ref{lm:structural-result} is satisfied for $\cH$. 
Recalling that $m$ is even, by Lemma \ref{lm:structural-result}, 
for the special polynomial $ p(x) = (x^\top u)^{m}$ where $u = (\theta - \wh{\theta})/\|\wh{\theta}- \theta\| $, we have 
\begin{equation}\label{ineq:population-coverage-analysis}
\begin{split}
	&\bbE_{X \sim N(0, I_d)}[p(X)] \\
	 \geq & \frac{2}{3} \inf \left\{ \int  p(x) dD'(x): \begin{array}{c}
		D' \text{ is a distribution}\text{ such that } N(\theta - \wh{\theta}, I_d) \geq \left (1 - \frac{4}{3} \delta\right)  D'
		\end{array}\right\}\\
		\overset{(a)}\geq & \frac{2}{3} \inf \left\{ \int x^m dD_u(x): \begin{array}{c}
		D_u \text{ is a distribution in } \bbR \text{ such that }N(r, 1) \geq (1 - 2\delta)  D_u
		\end{array}\right\}\\
		\overset{(b)}= & \inf \left\{ \int \frac{2}{3(1 - 2\delta)} x^m g(x) dx: \begin{array}{c}
		\int g(x) dx = 1 - 2 \delta, 0\leq g(x) \leq  \phi(x-r), \forall x \in \bbR
		\end{array}\right\} \;,
\end{split}
\end{equation} where in (a), we plug in $ p(x) = (x^\top u)^{m}$ and $D_u$ can be viewed as the marginal distribution of $D'$ along the direction $u$; in (b), we rewrite the density of $D_u(x)$ as $g(x)/(1 - 2\delta)$.

Since $m$ is an even integer, $x^m$ increases as $|x|$ increases. Therefore, to minimize $\int \frac{2}{3(1 - 2\delta)} x^m g(x) dx$, we would like to put as much mass as possible close to the origin. In particular, the optimal $g$ function is 
\begin{equation*}
	g(x) = \phi(x-r) \indi(|x| \leq t^*),
\end{equation*} where $t^*$ satisfies $\bbP_{N(r,1)}(|X| \geq t^*) = 2 \delta$. If we denote the CDF of $Z=X^{m}$ where $X \sim N(r,1)$ as $F$, then it is easy to check $t^{*m} = F^{-1}(1 - 2 \delta)$. Combining this argument and \eqref{ineq:population-coverage-analysis}, we have 
\begin{equation} \label{ineq:contradiction-one-side}
	\begin{split}
		&\bbE_{X \sim N(0, I_d)}[p(X)] \geq  \frac{2}{3(1 - 2\delta)}\int_{x^m \leq  F^{-1}(1 - 2 \delta)}   x^m \phi(x-r) dx \\
		& = \frac{2}{3(1 - 2\delta)} \bbE_{Z}[Z \indi(Z \leq F^{-1}(1 - 2 \delta))] \overset{(a)}\geq \frac{1}{3\sqrt{2}e(1 - 2 \delta)} \exp(-r^2/2 + r \sqrt{m/e}) (m/e)^{m/2}\\
		& \overset{(b)}\geq \frac{1}{4\sqrt{2e}e} \exp(r \sqrt{m/e}) (m/e)^{m/2} \;,
	\end{split}
\end{equation} where (a) follows from Lemma \ref{lm:gaussian-tail}(iv) in Appendix \ref{sec:proof-additional-lemmas} given that $m \geq 20$, $\delta \leq \exp(-1000)$ and $\Phi^{-1}(1 - \delta) - r \geq \sqrt{2m} $ holds (this indeed holds, as $r \leq 1$ by our assumption and $\Phi^{-1}(1 - \delta) - r \geq \sqrt{\log(1/ (\sqrt{2\pi}\delta))} - r \geq \sqrt{2m}$, where the first inequality is by Lemma \ref{lm:normal-quantile} in Appendix \ref{sec:proof-additional-lemmas} and second inequality is because $m \leq \log(1/\delta)/9$ and $\delta \leq \exp(-1000)$); (b) is because $r \leq 1$ and $\delta \leq \exp(-1000)$.

On the other hand, we note that $\bbE_{X \sim N(0, I_d)}[p(X)] = (m-1)!! \leq \sqrt{2}e (m/e)^{m/2}$ by Lemma \ref{lm:gaussian-tail} (i). Therefore, when $r \geq \frac{8}{\sqrt{m}}$, for this special polynomial $ p(x) = (x^\top u)^{m}$, we have 
\begin{equation*}
	\begin{split}
		 & \bbE_{X \sim N(0, I_d)}[p(X)] \overset{ \eqref{ineq:contradiction-one-side}}\geq  \frac{1}{4\sqrt{2e}e} \exp(r \sqrt{m/e}) (m/e)^{m/2} > \sqrt{2}e (m/e)^{m/2} \geq \bbE_{X \sim N(0, I_d)}[p(X)] \;, 
	\end{split}
\end{equation*} 
a contradiction. Therefore, the assumption ``$\texttt{F}(\delta) \neq \emptyset$'' does not hold. This finishes the proof of Part 1.

\vskip.3cm
{\bf \noindent (Part 2: show $\wt{\epsilon} \leq C( \sqrt{r } + \epsilon)$)}. To show that, we are going to construct a $D$ such that \eqref{eq:D-condition2} is satisfied with $\delta = \sqrt{r} + \epsilon$. The key result we use is Lemma \ref{lm:population-existence-analysis} below, whose proof is provided in the subsequent subsections.
\begin{Lemma} \label{lm:population-existence-analysis}
	Suppose $u$ is a fixed unit vector in $\bbR^d$. Then given any $r\in [0, 1]$, any $\delta \in [ \sqrt{r} , 1]$ and any nonnegative integer $m \leq \frac{\delta^2}{256 r^2}$, there exists a multivariate distribution $D$ supported on $\bbR^d$ such that
	\begin{equation} \label{eq:original-program-popu}
		\begin{split}
			\quad &  D \geq (1 - \delta) N(ru, I_d),\quad  N(ru, I_d) \geq (1 - \delta )D \quad \text{ and }\quad  D\equiv_{m}N(0,I_d).
		\end{split}
	\end{equation}
\end{Lemma}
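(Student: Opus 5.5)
We reduce to one dimension and take $D=P_u^A$ (Definition \ref{def:high-dim-distribution}), where the one-dimensional density $A$ follows the construction (\ref{eq:sq-con-A})--(\ref{eq:A-NGCA-SQ}) with $\epsilon_{\max}$ replaced by $\delta$ and the mean shift equal to $r$:
\[
A(x)=\phi(x)\indi(|x|\le t)+(1-\delta)\phi(x-r)\indi(|x|>t)+\delta\, p(x)\indi(|x|\le t/4),
\]
with $t\asymp \delta/r$ (specifically $t=\delta/(4r)$ up to constants) and $p$ a polynomial of degree at most $m$. In $P_u^A$ the orthogonal part is exactly $N(0,I_{d-1})$, as it is in $N(ru,I_d)$. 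Hence $D\equiv_m N(0,I_d)$ follows once $A$ matches the first $m$ moments of $N(0,1)$: every polynomial of degree at most $m$ expands into products of powers of $u^\top x$ and of the orthogonal coordinates. Likewise, the two density-ratio conditions reduce to the pointwise bounds $(1-\delta)\phi(x-r)\le A(x)$ and $(1-\delta)A(x)\le \phi(x-r)$ for all $x\in\bbR$.

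\textbf{Moment conditions.} Choose $p$ so that $\int_{-t/4}^{t/4}x^kp(x)\,dx=\delta^{-1}\bigl[\int_{|x|>t}x^k\phi(x)\,dx-(1-\delta)\int_{|x|>t}x^k\phi(x-r)\,dx\bigr]$ for $k=0,\dots,m$. This is condition (\ref{eq:pcon3}). With $k=0$ it gives total mass one, and for $k=1,\dots,m$ it gives moment matching. Following Lemma 8.18 and Exercise 8.3 of \cite{diakonikolas2023algorithmic}, we take $p$ to be the minimal-$L^2([-t/4,t/4])$ solution, written in rescaled Legendre polynomials. We then bound $\sup_{|x|\le t/4}|p(x)|$ by the size of the right-hand sides times a factor like $(Cm/t)^{O(m)}$. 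The right-hand sides are differences of Gaussian tail moments beyond $t$, of size roughly $t^{k}e^{-t^2/2}\cdot rt/\delta$. Because $t^2\gtrsim m$ (here $t=\delta/(4r)\ge 4\sqrt m$ by $m\le \delta^2/(256r^2)$), the factor $e^{-t^2/2}$ beats the polynomial growth. This should give $\delta\sup|p|\le c\,\phi(t/4)\cdot\min(\delta,1)$ for a small constant $c$.

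\textbf{Pointwise bounds.} On $|x|>t$, we have $A=(1-\delta)\phi(x-r)$. So $A\ge(1-\delta)\phi(x-r)$ holds trivially, and $(1-\delta)A\le\phi(x-r)$ holds as well. On $|x|\le t$ we need $(1-\delta)\phi(x-r)\le \phi(x)+\delta p(x)\indi(|x|\le t/4)$ and $(1-\delta)(\phi(x)+\delta p)\le\phi(x-r)$. The ratio satisfies $\phi(x-r)/\phi(x)=e^{rx-r^2/2}\in[e^{-rt-r^2},e^{rt}]$, and $rt\le\delta/4$. Hence $e^{\pm rt}$ lies between $1-\delta/3$ and $(1-\delta)^{-1}(1-\delta/3)$ roughly, which leaves slack of order $\delta\phi(x)$. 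On $|x|\le t/4$ we have $\phi(x)\ge\phi(t/4)$, so the perturbation $\delta p$ is absorbed by this $\Theta(\delta)\phi(x)$ slack. The assumption $\delta\ge\sqrt r$ guarantees $t\ge \delta/(4r)\ge 1/(4\sqrt r)$, so $t$ is large. For $\delta$ near 1 the first inequality needs separate care; there one may simply use $\delta\ge 1/2$ and accept a cruder slack.

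\textbf{Main obstacle.} The hard step is the quantitative bound on $\sup|p|$ over $[-t/4,t/4]$. We must show the moment-correction polynomial is small relative to $\delta\phi(t/4)$, uniformly in $m\le t^2/16$. The comparison is subtle: the Legendre/Vandermonde inversion costs factors like $(m/t)^{m}\cdot 2^{O(m)}$, while the tail moments give $t^m e^{-t^2/2}$. We also need $e^{-t^2/2}\ll e^{-t^2/32}\cdot 2^{-O(m)}$, which holds exactly because $m\lesssim t^2$. This is where the constant 256 gets used, and we would first try bounding the Legendre coefficients directly via $\int x^k P_j$ estimates.
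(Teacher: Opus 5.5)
Your construction is the paper's. Your $A$ equals $(1-\delta)\phi(x-r)+\delta q(x)$ with the paper's $q$ from (\ref{eq:qx-density2}). You use the same window $b=t/4$, the same Legendre-based inversion (Lemma~\ref{lm:polynomial-matching-bound}), and the same pointwise density-ratio checks. The paper takes $t=\delta/(2r)$. It closes the step you leave open by bounding the right-hand sides $a_k$ with Gaussian tail estimates and plugging them into $\sup_{|x|\le b}|p(x)|\le 2m^2\sum_{k\le m}(2e)^k\max_{j\le k}|a_j|b^{-j-1}$.

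\textbf{The inversion heuristic.} As written, your accounting for the main obstacle would not close. The inversion does not cost $(m/t)^m$. The normalized Legendre coefficients are $2^{O(k)}$, so the cost at degree $k$ is $2^{O(k)}b^{-k-1}$. The $t^k$ in the tail moments then cancels against $b^{-k}=(4/t)^k$, giving $\sup|p|\lesssim m^2(8e)^m e^{-ct^2}$ with no $m^m$. With your accounting the product is $m^m2^{O(m)}e^{-t^2/2}$. That does not beat $\phi(t/4)$ when $t^2\asymp m$ and $m$ is large.

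\textbf{Constants for your choice of $t$.} Your $t=\delta/(4r)$ only guarantees $t\ge 4\sqrt m$. The paper's cruder bound on the $\phi(x)-\phi(x-r)$ term gives only $e^{-t^2/4}$ decay, and then requires $t\ge 8\sqrt m$. That is why it uses $t=\delta/(2r)$, the largest value its (\texttt{C2}) check allows. With your $t$ you must keep the sharp $e^{-t^2/2}$ rate, which is available because $rt\le 1/4$; with it the bound closes for every $m\ge 1$.

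\textbf{Edge cases.} $t$ is large because $m\ge 1$ forces $t\ge 4\sqrt m$, not because $\delta\ge\sqrt r$, which only gives $t\ge 1/4$. So split off the cases $m=0$ and $r=0$, where $D=N(ru,I_d)$ works trivially. No special care is needed for $\delta$ near $1$: on $|x|\le b$ the slack satisfies $1-(1-\delta)e^{rb}\ge 1-(1-\delta)e^{\delta/16}\gtrsim\delta$ uniformly on $(0,1]$.
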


Now we divide the proof into two cases.  We will take $c_0$ to be a sufficiently small universal constant, so that for any $r \in [0,c_0]$ 
we have $\log(1/\sqrt{r}) \leq \frac{1}{256 r}$. Set $C = \frac{1}{c_0^2}$. 

{\bf \noindent Case 1: $ \sqrt{r } + \epsilon \geq c^2_0$}. In this case,  $\wt{\epsilon} \leq C( \sqrt{r } + \epsilon)$ clearly holds as $\wt{\epsilon} \leq 1$.

{\bf \noindent Case 2:  $ \sqrt{r } + \epsilon \leq c^2_0$}. In this case, we know $r \leq c_0$. By applying Lemma \ref{lm:population-existence-analysis} with $u = (\theta - \wh{\theta})/\|\wh{\theta}- \theta\| $ and $r = \|\wh{\theta}- \theta\| $, we know when $\delta = \sqrt{r}$, there exists a multivariate distribution $D$ such that \eqref{eq:original-program-popu} is satisfied for all $m \leq \log(1/\delta) \leq \frac{\delta^2}{256 r^2}$.  Therefore, for any contamination distribution $Q$ and any $\epsilon \in [0, \epsilon_{\max}]$, this $D$ also satisfies
		\begin{equation*}
			\begin{split}
				& (1 - \epsilon)N(\theta - \wh{\theta}, I_d) + \epsilon Q \geq (1 - \sqrt{r})(1 - \epsilon) D \text{ and } D \equiv_{m_{\delta}} N(0,I_d)	\;,		\end{split}
		\end{equation*} 
        since $m_{\delta} \leq \log(1/\delta)$. 
        This further implies that this $D$ satisfies
		\begin{equation*}
			\begin{split}
				& (1 - \epsilon)N(\theta - \wh{\theta}, I_d) + \epsilon Q \geq (1 - \delta') D \text{ and } D \equiv_{m_{\delta'}} N(0,I_d) \;,
			\end{split}
		\end{equation*}	where $\delta' = \sqrt{r} + \epsilon$. This is because $m_{\delta}$ is a decreasing function of $\delta$.	 Therefore, $\texttt{F}(\sqrt{r} + \epsilon)$ is non-empty. Since $\wt{\epsilon}$ is defined to be the smallest $\delta$ such that $\texttt{F}(\delta)$ is non-empty, we have $\wt{\epsilon} \leq \sqrt{r} + \epsilon$. This finishes the proof.


\subsection{Proof of Lemma \ref{lm:population-existence-analysis} }
First, if $\frac{\delta^2}{256 r^2} \in [0,1)$, then $m = 0$ and 
we can simply take $D = N(ru, I_d)$. Similarly, when $r = 0$, we simply take $D = N(0, I_d)$. Therefore, for the rest of the proof, we focus on $\frac{\delta^2}{256 r^2} \geq 1$, $r \in (0,1]$ and $m \geq 1$. As we have already mentioned in Section \ref{sec:quasi-poly-algorithm}, the construction of $D$ is based on NGCA. In particular, we set $D= P_u^A$ where $A(x) = (1 - \delta  )\phi(x - r) + \delta q(x)$ and 
\begin{equation} \label{eq:qx-density2}
q(x) = \frac{\phi(x) - (1 - \delta) \phi(x-r) }{\delta} \indi(|x| \leq t) + p(x) \indi(|x| \leq b)
\end{equation} with $b = t/4$.
Here $p(x)$ is a polynomial to be specified later, and $t \geq 4$ is some tuning parameter. It is easy to check that $D = P_{\delta, ru, Q_u}$ where $Q_u = P_u^q$.
To show the three results in \eqref{eq:original-program-popu}, we just need to show
\begin{equation*}
	\begin{split}
	&(\texttt{C1})\quad D \geq (1 - \delta ) N(ru, I_d); \quad (\texttt{C2})\quad   N(ru, I_d) \geq (1 - \delta) D;\\
	&(\texttt{C3})\quad   (1 - \delta) \int x^k \phi(x - r) dx + \delta \int x^k q(x) dx = \int x^k \phi(x) dx, \quad \forall k = 0, \cdots, m.
	\end{split}
\end{equation*} We note that when $k = 0$, (\texttt{C1}) and (\texttt{C3}) above ensure that $q(x)$ is a valid probability distribution. 
\vskip.2cm
{\bf \noindent (Check for (\texttt{C1}))} 
To check (i), it is enough to check that $q(x) \geq 0, \forall x \in \bbR$. 
To check that, we need
	\begin{equation} \label{ineq:three-requirement}
	\begin{split}
		 &(i)\quad \phi(x) - (1 - \delta) \phi(x-r) \geq 0, \quad \forall x \in [-t,t] \setminus [-b,b],\\
		 &(ii)\quad \phi(x) - (1 - \delta) \phi(x-r) +\delta p(x) \geq 0, \quad \forall x \in [-b,b].
	\end{split}
\end{equation}

First,
	\begin{equation*}
	\begin{split}
		(i) &\Longleftrightarrow \phi(x) - (1 - \delta) \phi(x-r) \geq 0, \quad \forall x \in [-t,t] \setminus [-b,b] \\
		&\Longleftrightarrow t \leq \frac{r}{2} + \frac{\log ( \frac{1}{1 - \delta } ) }{r} \Longleftarrow t \leq \frac{\log ( \frac{1}{1 - \delta } ) }{r} \Longleftarrow t \leq \frac{\delta}{r}.
	\end{split}
\end{equation*} In addition, since $$\phi(x - r) = \exp(xr-r^2/2) \phi(x) \leq \exp(br) \phi(x), \quad \forall x\in [-b,b],$$ we have
	\begin{equation*}
		 \phi(x) - (1 - \delta) \phi(x-r)  \geq (1 - (1 - \delta)\exp(br))\phi(x) \overset{(a)}\geq  \phi(x)\delta/2, \quad \forall x\in [-b,b],
	\end{equation*} where in (a), we assume that 
    $ r \leq \frac{\delta}{2( 1- \delta) b}$ which implies that 
    $(1 - \delta)\exp(br) \leq 1 - \delta/2$. 
    Therefore, given $ r \leq \frac{\delta}{2(1 - \delta) b}$, 
    (ii) is satisfied if $\sup_{x \in [-b,b]} |p(x)| \leq \frac{1}{2} \phi(b)$.
	
In summary, (\texttt{C1}) is satisfied if 
\begin{equation} \label{ineq:C1}
\begin{split}
	&r \leq \frac{\delta}{2(1 - \delta) b}, \quad t \leq \frac{\delta}{r} \quad \text{and}\quad \sup_{x \in [-b,b]} |p(x)| \leq \frac{1}{2} \phi(b) \\
	\overset{(a)}\Longleftarrow & t \leq \frac{\delta}{r} \quad \text{and}\quad \sup_{x \in [-b,b]} |p(x)| \leq \frac{1}{2} \phi(b),
\end{split}
\end{equation} where in (a), we plug in $b = t/4$. 

\vskip.2cm
{\bf \noindent (Check for (\texttt{C2}))}
First, by construction of $D$,
\begin{equation} \label{ineq:forth-condition}
	\begin{split}
		& N(ru, I_d) \geq (1 - \delta) D  \Longleftrightarrow \phi(x - r) \geq  (1 - \delta)A(x), \forall x \in \bbR\\
		& \overset{(a)}\Longleftarrow \phi(x-r) \indi(|x| \leq t) \geq (1 - \delta) \phi(x) \indi( |x| \leq t )+ \delta (1 - \delta) p(x) \indi(|x| \leq b) \;,
	\end{split}
\end{equation} where in (a), we plug in the definition of $A(x)$. To further check the last condition in \eqref{ineq:forth-condition}, we just need to consider two scenarios:
\begin{itemize}
	\item ($|x| \leq b$) First, we can check that $2br \leq \delta/2$ implies that 
	\begin{equation} \label{ineq:interme}
		\exp(-2br) \geq 1 - \delta/2.
	\end{equation} Then 
	\begin{equation} \label{ineq:interme2}
		\phi(x-r) = \phi(x) \exp(xr -r^2/2) \overset{(a)}\geq \phi(x) \exp(-2br) \overset{ \eqref{ineq:interme} } \geq (1 - \delta/2) \phi(x) \quad \forall x \in [-b,b], 
	\end{equation} where in (a), we use the fact $b \geq r/2$. Then
	\begin{equation*}
		\begin{split}
			\eqref{ineq:forth-condition} \text{ holds for all }x \in [-b,b] &\Longleftrightarrow \phi(x-r) \geq (1 - \delta) \phi(x) + \delta(1 - \delta) p(x), \forall x \in [-b,b] \\
			&  \overset{\eqref{ineq:interme2}}\Longleftarrow  \sup_{x \in [-b,b]} |p(x)| \leq \frac{1}{ 2 (1 - \delta) } \phi(b).
		\end{split}
	\end{equation*} 
	
	\item ($b<|x| \leq t$)
	\begin{equation*}
		\begin{split}
			\eqref{ineq:forth-condition} &\Longleftrightarrow \phi(x-r) \geq (1 - \delta) \phi(x), \forall |x| \in (b,t] \\
			& \Longleftrightarrow \exp(-tr -r^2/2) \geq 1 - \delta \overset{(a)}\Longleftarrow  t \leq \frac{\log\left( \frac{1}{1- \delta} \right)}{2r} \Longleftarrow t \leq \frac{\delta}{2r} \;,  
		\end{split}
	\end{equation*} where in (a) we use the fact $r/2 \leq t$.
\end{itemize} In summary, to ensure $N(ru, I_d) \geq (1 - \delta) D$, we need
\begin{equation} \label{ineq:second-condition2}
	\begin{split}
		&2br \leq \delta/2, \quad t \leq\frac{\delta}{2r} \quad \text{ and }\quad \sup_{x \in [-b,b]} |p(x)| \leq \frac{1}{ 2(1 - \delta) } \phi(b)\\
		\overset{(a)}\Longleftarrow  & t \leq\frac{\delta}{2r} \quad \text{ and }\quad \sup_{x \in [-b,b]} |p(x)| \leq \frac{1}{ 2(1 - \delta) } \phi(b),
	\end{split}
\end{equation}  where in (a), we plug in $b = t/4$.

\vskip.2cm
{\bf \noindent (Check for (\texttt{C3}))} First, we can write 

\begin{equation} \label{eq:moment-matching}
\begin{split}
		 & (1 - \delta) \int x^k \phi(x - r) dx + \delta \int x^k q(x) dx = \int x^k \phi(x) dx, \quad \forall k \in  0\cup [m] \\
		\overset{ \eqref{eq:qx-density2} }\Longleftrightarrow & (1 - \delta) \int x^k \phi(x-r) \indi(|x| > t) + \delta \int_{-b}^b x^k p(x) dx = \int x^k \phi(x) \indi(|x| > t) dx,  \quad \forall k \in  0\cup [m]  \\
		\Longleftrightarrow & \int_{|x| > t} x^k \phi(x) dx + \frac{1 - \delta}{\delta} \int_{|x| > t} x^k (\phi(x) - \phi(x-r))  dx= \int_{-b}^{b} x^k p(x) dx ,  \quad \forall k \in  0\cup [m] .
\end{split}
\end{equation} 
By Lemma \ref{lm:polynomial-matching-bound}, there exists a unique polynomial $p(x)$ of degree at most $m$ satisfying \eqref{eq:moment-matching} and in addition it also satisfies
\begin{equation*}
	\sup_{x \in [-b,b]} |p(x)| \leq  2m^2  \sum_{i=0}^{m} \left(2e\right)^i \left(  \max_{0 \leq k \leq i}|a_k| b^{-k-1} \right)  ,
\end{equation*} where $a_k = \int x^k \phi(x) \indi(|x| > t) dx + \frac{1 - \delta}{\delta} \int x^k (\phi(x) - \phi(x-r)) \indi(|x| > t) dx$.  

Next, we provide an upper bound for $|a_k|$ for any $k = 0, \ldots, m$. 
\begin{equation*}
	\begin{split}
		|a_k| \leq \underbrace{\left|\int x^k \phi(x) \indi(|x| > t) dx\right|}_{(I)} + \underbrace{\left|\frac{1 - \delta}{\delta} \int x^k (\phi(x) - \phi(x-r)) \indi(|x| > t) dx\right|}_{(II)}.
	\end{split}
\end{equation*} First, for any $k = 0, \ldots, m$,
\begin{equation} \label{ineq:bound-for-I}
	\begin{split}
		(I)  \leq \frac{2}{\sqrt{2\pi }} \int_{t}^{\infty} \exp(-x^2/2) x^k dx &\leq \frac{2}{\sqrt{2 \pi }} \left(\sup_{x \geq t} x^k \exp(-x^2/4)\right)  \int_{t}^{\infty} \exp(-x^2/4) dx \\
		& \overset{(a)}\leq \sqrt{ \frac{2}{\pi} } \cdot \frac{2}{t}  \exp(-t^2/4) \sup_{x \geq t} \exp(-x^2/4 + k \log x) \\
		& \overset{(b)}\leq \sqrt{ \frac{2}{\pi} } \cdot \frac{2}{t}  \exp(-t^2/2 + k \log t) \;,
	\end{split}
\end{equation} 
where (a) is because $\int_{t}^{\infty} \exp(-x^2/4) dx \leq \frac{2}{t} \exp(-t^2/4) $ by Theorem 1.2.3 of \cite{durrett2019probability} and (b) is because the function $g(x) = -x^2/4 + k \log x$ increases when $x \leq \sqrt{2k}$  and decreases when $x \geq \sqrt{2k}$ and we assume that we take $t \geq \sqrt{2m}$; 
therefore, $ \sup_{x \geq t} \exp(-x^2/4 + k \log x) \leq \exp(-t^2/4 + k \log t)$ for all $k = 0, \ldots, m$.

\begin{equation*}
	\begin{split}
		(II) &= \frac{1 - \delta}{\delta} \left| \int_t^{\infty} x^k (\phi(x) - \phi(x-r)) dx + \int_{- \infty}^{-t} x^k (\phi(x) - \phi(x-r)) dx \right| \\
		& \overset{ (a) }\leq \frac{1 - \delta}{\delta} \left( \int_t^{\infty} x^k \sup_{\xi \in [x-r, x]} | r \xi \frac{1}{\sqrt{2\pi}} \exp(-\xi^2/2)| dx + \int^{-t}_{-\infty} |x^k| \sup_{\xi \in [x-r, x]} |r \xi \frac{1}{\sqrt{2\pi}} \exp(-\xi^2/2)| dx \right) \\
		& \overset{(b)}\leq  \frac{1 - \delta}{\delta} \left( \int_t^{\infty} x^k  | r (x-r) \frac{1}{\sqrt{2\pi}} \exp(-(x-r)^2/2)| dx + \int^{-t}_{-\infty} |x^k|  |rx \frac{1}{\sqrt{2\pi}} \exp(-x^2/2)| dx \right) \\
		& \leq \frac{(1 - \delta)r}{\delta}  \left( \int_t^{\infty} x^k   (x-r) \frac{1}{\sqrt{2\pi}} \exp(-(x-r)^2/2) dx + \int_{t}^{\infty} x^{k+1} \frac{1}{\sqrt{2\pi}} \exp(-x^2/2) dx \right)\\
		& \overset{(c)}\leq  \frac{(1 - \delta)r}{\delta} \left( \int_t^{\infty} x^{k+1} \frac{1}{\sqrt{2\pi}} \exp(-x^2/4) dx + \int_{t}^{\infty} x^{k+1} \frac{1}{\sqrt{2\pi}} \exp(-x^2/2) dx \right)\\
		& \overset{(d)}\leq \frac{(1 - \delta)r}{ \sqrt{\pi} \delta} \sqrt{2} \cdot \frac{2}{t} \exp(-t^2/4 + (k+1) \log t) \;,
	\end{split}
\end{equation*} 
where in (a), we use Taylor expansion, and in (b) we use the facts that 
$t -r \geq 1$ and that the function $h(x) = |x \exp(-x^2/2)|$ increases on $x \leq -1$ and $x \in [0,1]$ and decreases on $x \in [-1,0]$ and $x \geq 1$; (c) holds as long as $t \geq \frac{\sqrt{2}}{\sqrt{2} - 1} r $; (d) follows from a similar analysis as in \eqref{ineq:bound-for-I} given that we choose $t \geq 2 \sqrt{m+1}$.  

In summary, as long as $t \geq 2\sqrt{m+1}$ and $r \leq 1$, there exists a polynomial $p(x)$ of degree at most $m$ such that condition in (\texttt{C3}) is satisfied and 
\begin{equation} \label{ineq:density-bound}
\begin{split}
	\sup_{x \in [-b,b]} |p(x)| &\leq  2m^2 (1+ \frac{r}{\delta})   \sum_{i=0}^{m} \left(2e\right)^i \left(  \max_{0 \leq k \leq i}\exp( -t^2/4 + (k+1) \log t - (k+1) \log b) \right)  \\
	& =  2m^2  (1+ \frac{r}{\delta})   \sum_{i=0}^{m} \left(2e\right)^i  \exp( -t^2/4 + (i+1) \log (t/b)) \\
	& \leq   2m^2  (1+ \frac{r}{\delta}) \frac{t}{b} \frac{ \exp(-t^2/4 + m \log(2et/b) )}{\log(2et/b) - 1} =    8m^2  (1+ \frac{r}{\delta})   \frac{ \exp(-t^2/4 + m \log(8e) )}{\log(8)}\\
	& \leq  16m^2   \frac{ \exp(-t^2/4 + m \log(8e) )}{\log(8)} \;,
\end{split}
\end{equation} where in the last equality, we plug in $b = t/4$ and in the last inequality, we use the fact $\delta \geq r$.

	 Combining \eqref{ineq:C1}, \eqref{ineq:second-condition2} and \eqref{ineq:density-bound}, it follows that we need to choose $t$ such that $ t \leq \frac{\delta}{2r}$ and 
\begin{equation*}
	\begin{split}
		\sup_{x \in [-b,b]} |p(x)| \leq \frac{1}{2}\phi(b).
	\end{split}
\end{equation*} In view of \eqref{ineq:density-bound}, a sufficient condition for the second one is 
\begin{equation*}
	16m^2   \frac{ \exp(-t^2/4 + m \log(8e) )}{\log(8)} \leq \frac{1}{2\sqrt{2\pi}} \exp(-t^2/32).
\end{equation*}
 By simple calculation, we can check that if $t \geq 8 \sqrt{m}$, then the above condition can be satisfied for any $m \geq 1$. So if we take any $(r, \delta)$ such that $\delta \in [\sqrt{r}, 1]$ and $\frac{\delta^2}{256 r^2} \geq 1$, and positive integer $m \leq \frac{\delta^2}{256 r^2}$, then the choice $t = \frac{\delta}{2r} $, $b=t/4$ can ensure all (\texttt{C1})-(\texttt{C3}) are satisfied.


\section{Proof of Theorem \ref{thm:finite-sample-analysis} in Section \ref{sec:quasi-finite-sample}  } \label{proof:finite-sample} 

\subsection{Preliminary analysis and high-probability events} \label{proof:preliminary}
In the following, we perform preliminary work and list a few high-probability events to be used later. Notice that since $\wh{\theta}$ is independent of $\{X_i\}$s, $X_1 -  \wh{\theta}, \ldots, X_n - \wh{\theta} \overset{iid}\sim   (1-\epsilon) N(\theta - \wh{\theta}, I_d) + \epsilon Q$ for some unknown $Q$. They can also be generated as follows: let $Y_1, \ldots, Y_n \overset{iid}\sim N(\theta - \wh{\theta}, I_d)$, $O_1, \ldots, O_n \overset{iid} \sim Q $ and $n_0 \sim \text{Binomial}(n, 1-\epsilon)$ be generated independently of $\{Y_i\}$s and $\{O_i\}$s. If we denote a random shuffle of $\{ Y_1, \ldots, Y_{n_0}, O_{n_0+1}, \ldots, O_{n} \}$ as $\{X_1', \cdots, X_n'\}$, then $X_1', \cdots, X_n'  \overset{iid}\sim   (1-\epsilon) N(\theta - \wh{\theta}, I_d) + \epsilon Q$. So without loss of generality, we can simply view $\{X_1 -  \wh{\theta}, \ldots, X_n - \wh{\theta}  \}$ as a random shuffle of $\{ Y_1, \ldots, Y_{n_0}, O_{n_0+1}, \ldots, O_{n} \}$. We also denote $r = \|\wh{\theta}- \theta\| $ and $u = (\theta - \wh{\theta})/\|\wh{\theta}- \theta\| $. 

First, it is easy to check $|\Delta| \leq \log n$. For each $\delta \in \Delta$, let $Z_{\delta, i} =V_i^{m_{\delta}}$ where $V_i = Y_i^\top u$ for $i = 1 \ldots, n$. Notice that conditioning on $\wh{\theta}$, $V_i \overset{iid}\sim N(r,1)$, $Z_{\delta, i} \overset{iid} \sim P_{\delta}$ where $P_{\delta}$ is the distribution of $V^{m_{\delta}}$ given $V \sim N(r,1)$. We denote the CDF of $Z_{\delta, i}\sim P_{\delta}$ as $F_{\delta}$.

 The following high probability events will be frequently used in the rest of the analysis: for $n_0 \geq 1$, let $F_{n_0}(\cdot)$ be the empirical CDF of $\{V_i \}_{i=1}^{n_0}$ and $F(\cdot)$ be the CDF of $N(r,1)$,
\begin{equation}\label{eq:high-prob-events}
	\begin{split}
			\cE_1 &= \left\{ \|\wh{\theta} -\theta\|_2 \leq C\left( \sqrt{\frac{d}{n}} + \sqrt{\frac{\log(1/\alpha)}{n} } + \epsilon \right)^{0.1} \right\},\\
	\cE_2 & = \left\{ |n_0 - n(1 - \epsilon)| \leq \frac{n\epsilon}{2} + 2 \log(8/\alpha) \right\},\\
	\cE_3 & = \Big \{  \left| \frac{1}{n_0} \sum_{i=1}^{n_0} Z_{\delta, i} \indi( Z_{\delta, i} \leq F_{\delta}^{-1}(1 - 2\delta) ) - \bbE_{P_{\delta}}[Z \indi( Z \leq F_{\delta}^{-1}(1 - 2\delta) )]  \right| \\
	& \quad \quad \leq 2 \sqrt{\frac{2 \log n}{\alpha n_0}} \exp(3r\sqrt{2m_{\delta}}/4) (2m_{\delta}/e)^{m_{\delta}/2} \text{ for all }\delta \in \Delta \cap [0,1/2) \Big \},\\
	\cE_4 & = \left\{ \sup_{t \in \bbR} |F_{n_0}(t) - F(t)| \leq \sqrt{ \frac{\log(8/\alpha)}{2n_0} } \right\}.
	\end{split}
\end{equation}

The claim is that
\begin{equation} \label{ineq:high-prob-event}
\begin{split}
&\bbP(\cE_1) \geq 1 - \alpha/4,\quad  \bbP(\cE_2) \geq 1 - \alpha/4,\quad  \bbP(\cE_3) \geq 1 - \alpha/4, \quad \bbP(\cE_4) \geq 1 -\alpha/4,
\end{split}
\end{equation} where the randomness is taken with respect to $\wh{\theta}$, $\{X_i\}_{i=1}^n$ and $n_0$. The proof of this claim will be postponed to Section \ref{sec:proof-high-prob-event}.

 In the following two subsections, we will first show the coverage guarantee using \eqref{ineq:high-prob-event} in Section \ref{sec:coverage-analysis} and then show the diameter guarantee in Section \ref{sec:radius-analysis}.

\subsection{Coverage Analysis} \label{sec:coverage-analysis}
This is an extension of the first inequality in Theorem \ref{th:hat-epsilon-guarantee}. We will show that given events $\cE_1- \cE_4$ in \eqref{eq:high-prob-events}, then $r\leq \frac{11}{\sqrt{ m_{\wh{\epsilon}} }}$. This directly implies the coverage of $\wh{\CS}$ with probability at least $1 - \alpha$. First, we note $r \leq 1$ on event $\cE_1$ under our assumption on $n$. So if $\wh{\epsilon} \geq \exp(-1000)$ or $0 < m \leq 100$, then $r\leq \frac{11}{\sqrt{ m_{\wh{\epsilon}} }}$ clearly holds. Therefore, we just need to focus on the case $\wh{\epsilon} \in [0, \exp(-1000)]$ and $m \geq 100$. Let
\begin{equation*}
	\texttt{LP}(\delta) = \left\{  w \in \cW_{n,\delta}: T_{m_\delta}(w,\widehat{\theta})
\le
2\left(1+\frac{4}{\alpha}\right)
\sqrt{\frac{1}{n(1-\delta)}} \right\}.
\end{equation*}

Next, we are going to show that given $\cE_1- \cE_4$ occur the following holds: for any $\delta \in \Delta \cap [0,\exp(-1000)]$,
\begin{equation} \label{ineq:finite-sample-coverage}
	r > \frac{11}{\sqrt{ m_{\delta} }} \Longrightarrow\texttt{LP}(\delta)= \emptyset.
\end{equation}
Therefore, $\texttt{LP}({\wh{\epsilon}}) \neq \emptyset$ for $\wh{\epsilon} \in \Delta \cap [0,\exp(-1000)] $ implies that $r\leq \frac{11}{\sqrt{ m_{\wh{\epsilon}} }}$. 
Next, we show \eqref{ineq:finite-sample-coverage} by contradiction. Suppose $\texttt{LP}(\delta) \neq \emptyset$ when $ r > \frac{11}{\sqrt{ m_{\delta} }}$ for some $\delta \in \Delta \cap [0,\exp(-1000)]$, i.e., there exists $w \in \cW_{n,\delta}$ such that $T_{m_\delta}(w,\widehat{\theta})
\le
2\left(1+\frac{4}{\alpha}\right)
\sqrt{\frac{1}{n(1-\delta)}}$. Then, we argue that this implies 
\begin{equation} \label{ineq:extend-to-all-polynomial}
	\sup_{p \in \cP_{m_{\delta},d}}\left|\sum_{i=1}^n w_i p(X_i - \wh{\theta} ) - \bbE_{N(0, I_d)}[p(X)]\right| \leq 2(1 + 4/\alpha)  \sqrt{ \bbE_{N(0, I_d)}[p^2(X)] }  \frac{N}{\sqrt{n( 1- \delta)}},
\end{equation} where $N = {d+m_{\delta} \choose m_{\delta}}$. To show this, without loss of generality, we can assume $\bbE_{N(0, I_d)}[p^2(X)] = 1$, i.e., $p \in \cF_{m_{\delta},d}$ where $\cF_{m_{\delta},d} = \{p \in \cP_{m_{\delta},d}: \bbE_{N(0, I_d)}[p^2(X)] = 1 \}$. Recall that $\cH_{m_{\delta},d}$ forms a basis of $\cF_{m_{\delta},d}$ and denote the basis functions in it as $h_1, \ldots, h_N$. Then any $p \in \cF_{m_{\delta},d}$ can be written as 
\begin{equation*}
	p(x) = \langle \eta, \Phi_{m_{\delta}}(x) \rangle, \text{ for some } \eta \in \bbR^N \text{ with }\|\eta\|_2 = 1,
\end{equation*} where $ \Phi_{m_{\delta}}(x)  \in \bbR^N$ and each coordinate represents a distinct basis function evaluated on $x$, i.e., $\Phi_{m_{\delta}}(x) = \{h_j(x)\}_{j=1}^N$. Therefore,
\begin{equation*}
	\begin{split}
	& \sup_{p \in \cF_{m_{\delta},d} } \left|\sum_{i=1}^n w_i p(X_i - \wh{\theta} ) - \bbE_{N(0, I_d)}[p(X)]\right| = \sup_{\eta \in \bbR^N, \|\eta\|_2 = 1} \left|  \sum_{i=1}^n w_i  \langle \eta, \Phi_{m_{\delta}}(X_i -\wh{\theta} ) \rangle  -  \bbE_{N(0, I_d)}[\langle \eta, \Phi_{m_{\delta}}(X ) \rangle] \right| \\
	& =  \sup_{\eta \in \bbR^N, \|\eta\|_2 = 1} \left| \sum_{j=1}^N \eta_j \left(  \sum_{i=1}^n w_i  h_j(X_i - \wh{\theta})  -  \bbE_{N(0, I_d)}[ h_j(X)] \right) \right| \\
		& \leq  \sqrt{ \sum_{j=1}^N \left(  \sum_{i=1}^n w_i  h_j(X_i - \wh{\theta})  -  \bbE_{N(0, I_d)}[ h_j(X)] \right) ^2 }\leq  2(1 + 4/\alpha) \frac{N}{ \sqrt{n( 1- \delta)}  } \;,
	\end{split}
\end{equation*} where the first inequality is Cauchy-Schwarz 
and the second inequality is because $T_{m_\delta}(w,\widehat{\theta})
\le
2\left(1+\frac{4}{\alpha}\right)
\sqrt{\frac{1}{n(1-\delta)}}$.


Next, let us focus on the special polynomial $p(x) = (x^\top u)^m$ where $u = (\theta - \wh{\theta})/\|\wh{\theta}- \theta\| $. We write $\{Z_{\delta,(i)}\}_{i=1}^n$ as the order statistics of $\{Z_{\delta,i}\}_{i=1}^n$. We first derive a convenient lower bound for the order statistics $ Z_{\delta,( \lfloor n_0 - n\delta \rfloor)}$. Recall that $Z_{\delta, i} = V_i^{m_{\delta}}$ and conditioning on $\wh{\theta}$, $V_i \overset{iid}\sim N(r,1)$. Since $V_i$ has a continuous density, with probability one, there will be no ties among $\{Z_{\delta,i}\}_{i=1}^n$. Denote the empirical CDF of $\{Z_{\delta,i}\}_{i=1}^{n_0}$ as $F_{\delta,n_0}(\cdot)$ and the corresponding population CDF as $F_{\delta}(\cdot)$. Then
	\begin{equation} \label{ineq:DKW-implication}
		\begin{split}
			 \sup_{t \in \bbR} |F_{\delta,n_0}(t) - F_{\delta}(t)| &=  \sup_{t \geq 0} \left| \frac{1}{n_0} \sum_{i=1}^{n_0} \indi(|V_i| \leq t^{1/m_{\delta}})  - \bbP_{N(r,1)}(|V| \leq t^{1/m_{\delta}})\right| \\
			 &\leq 2 \sup_{t \in \bbR}  |F_{n_0}(t) - F(t)| \overset{ \cE_4 }\leq \sqrt{ \frac{2\log(8/\alpha)}{n_0} }. 
		\end{split}
	\end{equation} 
Given $\cE_2$, we know $n_0 \geq 0.9n$ since $\epsilon \leq 0.05$. Thus, for any $\delta \in \Delta \cap [0,\exp(-1000)]$, 
\begin{equation} \label{ineq:Z-quantile-ineq}
	\begin{split}
		& F_{\delta}( Z_{\delta,( \lfloor n_0 - n\delta \rfloor)}) \overset{\eqref{ineq:DKW-implication} }\geq F_{\delta,n_0}( Z_{\delta,( \lfloor n_0 - n\delta \rfloor )}) -  \sqrt{ \frac{2\log(8/\alpha)}{n_0} } \overset{(a)}\geq  1 -  \frac{n \delta+1}{n_0} -  \sqrt{ \frac{2\log(8/\alpha)}{n_0} } \overset{(b)}\geq 1 - 2 \delta\\
		\Longrightarrow & Z_{\delta,( \lfloor n_0 - n\delta \rfloor)}  \geq F_{\delta}^{-1}(1 - 2 \delta) \;,
	\end{split}
\end{equation} 
where (a) is because $F_{\delta,n_0}( Z_{\delta,( \lfloor n_0 - n\delta \rfloor )})  \geq 1 -  \frac{n \delta+1}{n_0} $; and (b) is because $4\sqrt{\frac{ \log(8/\alpha)}{n} }   \leq  \delta$ since $\delta \in \Delta$ and $n_0 \geq 0.9n$.

Then, for $p(x) = (x^\top u)^{m_{\delta}}$,
\begin{equation} \label{ineq:converage-mean-lower}
	\begin{split}
		&\min_{w \in \cW_{n,\delta}} \sum_{i=1}^n w_i p(X_i - \wh{\theta}) = \min_{w \in \cW_{n,\delta}} \left(\sum_{i=1}^{n_0} w_i p(Y_i) + \sum_{i = n_0+1}^n w_i p(O_i) \right)\\
		& \overset{(a)}\geq  \min_{\substack{w \in  [0, \frac{1}{n(1 - \delta)} ]^{ n_0}\\ \sum_{i=1}^{n_0} w_i = 1 - \frac{n-n_0}{n(1 - \delta)} }} \sum_{i=1}^{n_0} w_i p(Y_i)  = \min_{\substack{w \in  [0, \frac{1}{n_0- n\delta} ]^{ n_0}\\ \sum_{i=1}^{n_0} w_i = 1  }} \frac{n_0- n \delta}{n (1 - \delta)} \sum_{i=1}^{n_0} w_i Z_{\delta,i}  \\
		& \geq \frac{ n_0- n \delta}{n (1 - \delta)} \sum_{i=1}^{\lfloor n_0- n \delta \rfloor }  \frac{1}{n_0- n\delta}  Z_{\delta,(i)} =  \frac{1}{n (1 - \delta)} \sum_{i=1}^{n_0}  Z_{\delta,i} \indi(Z_{\delta,i} \leq Z_{\delta,(\lfloor n_0- n \delta \rfloor )})\\
		& \overset{ \eqref{ineq:Z-quantile-ineq} }\geq  \frac{1}{n (1 - \delta)} \sum_{i=1}^{n_0}  Z_{\delta,i} \indi(Z_{\delta,i} \leq F_{\delta}^{-1}(1 - 2 \delta)) \\ 
		& \overset{\cE_3}\geq \frac{n_0}{n(1 - \delta)} \left( \bbE_{Z\sim P_{\delta}}[Z \indi(Z \leq F_{\delta}^{-1}(1 - 2 \delta) )] -  2 \sqrt{\frac{2 \log n}{\alpha n_0}} \exp(3r\sqrt{2m_{\delta}}/4) (2m_{\delta}/e)^{m_{\delta}/2}\right) \\
		& \overset{(b) }\geq  \frac{n_0}{n(1 - \delta)} \left(  \frac{1}{2\sqrt{2}e} \exp(-r^2/2 + r \sqrt{m_{\delta}/e}) (m_{\delta}/e)^{m_{\delta}/2}-  2 \sqrt{\frac{2 \log n}{\alpha n_0}} \exp(3r\sqrt{2m_{\delta}}/4) (2m_{\delta}/e)^{m_{\delta}/2} \right)\\
		& \overset{(c)}\geq \frac{1}{2}\left(  \frac{1}{2\sqrt{2e}e} \exp( r \sqrt{m_{\delta}/e}) (m_{\delta}/e)^{m_{\delta}/2}-  4 \sqrt{\frac{ \log n}{\alpha n}} \exp(3r\sqrt{2m_{\delta}}/4) (2m_{\delta}/e)^{m_{\delta}/2}\right)\\
		& \overset{(d)}\geq \frac{1}{4}\cdot \frac{1}{2\sqrt{2e}e} \exp( r \sqrt{m_{\delta}/e}) (m_{\delta}/e)^{m_{\delta}/2} \;,
	\end{split}
\end{equation} where (a) is because $p(O_i) \geq 0$ and the maximum total weight we can put on the set of $\{O_i\}$s is $\frac{n-n_0}{n(1 - \delta)}$ and on the event $\cE_2$, $1 - \frac{n-n_0}{n(1 - \delta)} = \frac{n_0- n \delta}{n(1 - \delta)} \in [0,1]$; (b) is because Lemma \ref{lm:gaussian-tail}(iv) given $m_{\delta} \geq 20$ for all $\delta \in \Delta \cap [0,\exp(-1000)]$ and $\Phi^{-1}(1 - \delta) - r \geq \sqrt{2m_{\delta}} $ holds (this indeed holds as under our assumption as on $\cE_1$, $r \leq 1$ and $\Phi^{-1}(1 - \delta) - r \geq\sqrt{\log(1/ (\sqrt{2\pi}\delta))}	- r \geq \sqrt{2m_{\delta}}$, where the first inequality is by Lemma \ref{lm:normal-quantile} and second inequality is because $m_{\delta} \leq \log(1/\delta)/9$, $\delta \in [0,\exp(-1000)]$); (c) is because $r \leq 1$ and on $\cE_2$, $n_0 \geq 0.9n$; (d) is because under the assumptions of $m_{\delta} \leq (\log_d n)/4$, i.e., $d^{4m_{\delta}} \leq n$ and $n \gtrsim 1/\alpha^4$, we have $ 4 \sqrt{\frac{ \log n}{\alpha n}} \exp(3r\sqrt{2m_{\delta}}/4) (2m_{\delta}/e)^{m_{\delta}/2} \leq \frac{1}{2}  \frac{1}{2\sqrt{2e}e} \exp( r \sqrt{m_{\delta}/e}) (m_{\delta}/e)^{m_{\delta}/2} $.


At the same time, by \eqref{ineq:extend-to-all-polynomial}, we know for $p(x) = (x^\top u)^{m_{\delta}}$, 
\begin{equation} \label{ineq:right-hand-upper}
	\begin{split}
		 \min_{w \in \cW_{n,\delta}} \sum_{i=1}^n w_i p(X_i - \wh{\theta}) \leq & \bbE_{N(0, I_d)}[p(X)] +  2\sqrt{ \bbE_{N(0, I_d)}[p^2(X)] } (1 + 4/\alpha)  \frac{N}{\sqrt{n(1 - \delta)}} \\
		 = & \bbE_{N(0,1)} [X^{m_{\delta}}] + 2\sqrt{\bbE_{N(0,1)}[X^{2m_{\delta}}]} (1 + 4/\alpha)  \frac{N}{\sqrt{n(1 - \delta)}} \\
		 \leq & \sqrt{2}e (m_{\delta} /e)^{m_{\delta} /2} + 2{ \sqrt{\sqrt{2}e  (2m_{\delta} /e)^{m_{\delta} } }} (1 + 4/\alpha)  \frac{N}{\sqrt{n(1 - \delta)}} \\
		 \leq &\sqrt{2}e  (m_{\delta} /e)^{m_{\delta} /2} + 2{ \sqrt{ \sqrt{2}e (2m_{\delta} /e)^{m_{\delta} } }} (1 + 4/\alpha)  \frac{(ed/m_{\delta}  + e)^{m_{\delta} }}{\sqrt{n(1 - \delta)}}   \\
		 \overset{(a)}\leq & \sqrt{2}e (m_{\delta} /e)^{m_{\delta} /2} + \frac{1}{\alpha} (m_{\delta} /e)^{m_{\delta} /2}\frac{d^{m_{\delta} }}{\sqrt{n}}  \\
		 \leq & 5\sqrt{ (m_{\delta} /e)^{m_{\delta} } } \;,
	\end{split}
\end{equation} 
where (a) is because $m_{\delta}  \geq 20$ given $\delta \in [0,\exp(-1000)]$ and $m \geq 100$ and in the last step, we use the fact that $m_{\delta}  \leq {\log_{d}(n)}/4$ and $n \geq 1/\alpha^4$. 

Finally, when $r \geq \frac{11}{\sqrt{m_{\delta} }} $, for $p(x) = (x^\top u)^{m_{\delta}}$,
\begin{equation*}
	\begin{split}
		\min_{w \in \cW_{n,\delta}} \sum_{i=1}^n w_i p(X_i - \wh{\theta}) \overset{ \eqref{ineq:converage-mean-lower} }\geq \frac{1}{4}\cdot \frac{1}{2\sqrt{2e}e} \exp( r \sqrt{m_{\delta} /e}) (m_{\delta} /e)^{m_{\delta} /2}
&> 5 \sqrt{ (m_{\delta} /e)^{m_{\delta} } } \\
&\overset{ \eqref{ineq:right-hand-upper} }\geq \min_{w \in \cW_{n,\delta}} \sum_{i=1}^n w_i p(X_i - \wh{\theta}).
	\end{split}
\end{equation*} So, we have a contradiction. Thus, the assumption $\texttt{LP}(\delta) \neq \emptyset$ does not hold. 

\subsection{Diameter Analysis}\label{sec:radius-analysis}
This corresponds to extending the second inequality in Theorem \ref{th:hat-epsilon-guarantee} to the finite sample setting. First, we note that since $1 \in \Delta$ and $\texttt{LP}(1) \neq \emptyset$, $\wh{\epsilon} \leq 1$ (we assume the convention that $1/0 = \infty$).  To show the diameter claim, we will show that with probability at least $1 - \alpha$,
\begin{equation} \label{ineq:hat-epi-upper-bound}
	 \widehat{\epsilon} \leq C' \left( \sqrt{\frac{d}{n}} + \sqrt{\frac{\log(1/\alpha)}{n} } + \epsilon \right)^{0.05},
\end{equation} for a sufficiently large universal constant $C' > 0$ to be specified later. By simple manipulation, we can see that given \eqref{ineq:hat-epi-upper-bound},
\begin{equation*}
\begin{split}
	\frac{1}{\sqrt{m}_{ \wh{\epsilon} }} &\lesssim   \frac{1}{\sqrt{\log (n/d )} }+\frac{1}{\sqrt{ \log(1/\epsilon) }}+ \frac{1}{\sqrt{ \log(n/\log(1/\alpha)) }} +\frac{1}{\sqrt{\log_d n}} \\
	&\asymp  \frac{1}{\sqrt{\log (n/d )} }+\frac{1}{\sqrt{ \log(1/\epsilon) }}+ \frac{1}{\sqrt{ \log(n/\log(1/\alpha)) }} +\frac{1}{\sqrt{m}} \;,
\end{split}
\end{equation*}where the second equivalence is due to $n \asymp d^m$. This implies the desired result.

To show \eqref{ineq:hat-epi-upper-bound}, let us first define another set as follows:
	\begin{equation} \label{def:finite-sample-P}
		\texttt{F}'(\delta)= \left\{			D: P_{\epsilon, \theta - \wh{\theta}, Q} \geq (1-\delta/2) D 
			 \text{ and } D\equiv_{2m_\delta}N(0,I_d) \right\} \;,
	\end{equation} 
    where $m_{\delta}$ is defined in \eqref{def:m-delta}. 
	It is easy to check that
	\begin{equation} \label{ineq:P-monotone}
		\texttt{F}'(\delta)\neq \emptyset \Longrightarrow \texttt{F}'(\delta_0)\neq \emptyset \text{ for any } \delta_0 \geq \delta
	\end{equation}
	  since $m_{\delta}$ is a decreasing function of $\delta$. We are going to construct a multivariate distribution $D$ such that the constraint in \eqref{def:finite-sample-P} is satisfied. Notice that since $Q$ is unknown and could be arbitrary, our construction of $D$ should be agnostic to $Q$. The key technical lemma we leverage is Lemma \ref{lm:population-existence-analysis}. Let us take a sufficiently small constant $c_0$ such that for any $a \in [0,c_0]$, we have $\log(1/\sqrt{a}) \leq \frac{1}{256a}$. Now, let us take $C' = \frac{1}{c_0^2}$ and we are going to show $\widehat{\epsilon} \leq 4C' \left(\sqrt{r} + \epsilon + \sqrt{ \frac{\log(1/\alpha)}{n} } \right)$. We divide the proof into two cases.

{\bf \noindent Case 1: $\left(\sqrt{r} + \epsilon + \sqrt{ \frac{\log(1/\alpha)}{n} } \right) \geq c_0^2/4$.} In this case, $\widehat{\epsilon} \leq 4C' \left(\sqrt{r} + \epsilon + \sqrt{ \frac{\log(1/\alpha)}{n} } \right)$ clearly holds as $\wh{\epsilon} \leq 1$.

{\bf \noindent Case 2: $\left(\sqrt{r} + \epsilon + \sqrt{ \frac{\log(1/\alpha)}{n} } \right) \leq c_0^2/4$.} In this case, we have $r \leq c_0$ and by applying Lemma \ref{lm:population-existence-analysis} with $u = (\theta - \wh{\theta})/\|\wh{\theta}- \theta\| $, $r = \|\wh{\theta}- \theta\| $ and $\delta = 2\sqrt{r}$, we have for any positive integer $m' \leq \log(2/\delta) \leq \frac{\delta^2}{1024 r^2}$, there exists a multivariate distribution $D$ supported on $\bbR^d$ such that
	\begin{equation} \label{eq:original-program}
		\begin{split}
			\quad &N(\theta - \wh{\theta}, I_d) \geq (1 - \delta/2) D\quad \text{ and }\quad D \equiv_{m'} N(0,I_d)	.
		\end{split}
	\end{equation} Since $2m_{\delta} \leq \log(2/\delta)$ and $m_{\delta}$ is a decreasing function of $\delta$, for any contamination distribution $Q$ and $\epsilon \in [0,\epsilon_{\max}]$, this $D$ also satisfies
		\begin{equation*}
			\begin{split}
				& P_{\epsilon, \theta - \wh{\theta}, Q} \geq (1 - \sqrt{r})(1 - \epsilon) D \text{ and }D \equiv_{2m_{\delta}} N(0,I_d)\\
				& \Longrightarrow  P_{\epsilon, \theta - \wh{\theta}, Q} \geq (1 - \delta'/2) D \text{ and } D \equiv_{2m_{\delta'}} N(0,I_d),
			\end{split}
		\end{equation*} where $\delta' = 2(\sqrt{r} + \epsilon)$. Therefore, $\texttt{F}'(2(\sqrt{r} + \epsilon))$ is non-empty.


From now on, we set
\begin{equation} \label{eq:final-delta}
	\delta = \arg\min \left\{ \delta' \in \Delta : \delta'/2 \geq  \sqrt{r} + \epsilon + \sqrt{ \frac{\log(1/\alpha)}{n} } \right\} \;.
\end{equation}
By \eqref{ineq:P-monotone}, $\texttt{F}'(\delta)$ is non-empty. In addition, given 
any $D \in \texttt{F}'(\delta)$, we let 
$\frac{d D}{d P_{\epsilon, \theta - \wh{\theta}, Q} }$ 
be the Radon-Nikodym derivative of $D$ with respect to 
$P_{\epsilon, \theta - \wh{\theta}, Q} $. Denote $X_i' = X_i - \wh{\theta}$. 
Then given $\wh{\theta}$, $X_1', \cdots, X_n' \sim P_{\epsilon, \theta - \wh{\theta}, Q} $. In addition, given $\wh{\theta}$, we let $\wt{P}$ be a distribution on 
$(X', B) \in \bbR^d \times \{0,1\}$ constructed by first sampling 
$X' \sim P_{\epsilon, \theta - \wh{\theta}, Q}  $, then sampling 
$B|X' \sim \text{Bernoulli}\left( (1- \delta/2) \cdot \frac{d D }{ d P_{\epsilon, \theta - \wh{\theta}, Q} }(X') \right)$. It is easy to check, marginally, that 
$B\sim \text{Bernoulli}(1 - \delta/2$). We note that the $\{B_i\}$s can be viewed 
as rejection sampling indicators of getting samples drawn from $D$ 
given samples drawn from $P_{\epsilon, \theta - \wh{\theta}, Q} $. 
In particular, $X'|B = 1 \sim D$, since for any measurable set $\mathcal{B}$ 
in $\bbR^d$, 
\begin{equation} \label{ineq:rejection-sampling}
	\begin{split}
		\wt{\bbP}(X' \in \mathcal{B} |B = 1 ) = \frac{\wt{\bbP}(X' \in \mathcal{B},B = 1 )}{\bbP(B = 1)} = \frac{ \int_{\mathcal{B}} \wt{\bbP}(B=1 | X') d P_{\epsilon, \theta - \wh{\theta}, Q}(X') }{\bbP(B = 1)} = \int_{\mathcal{B}} dD(X').
	\end{split}
\end{equation} Let us define a set of weights $w_i = \frac{B_i}{\sum_{i=1}^n B_i}$ for $i = 1, \ldots, n$. Next, we are going to list two more high-probability events and show that conditioning on these two high-probability events, as well as $\cE_1$, $\texttt{LP}(\delta)$ is non-empty given this set of weights $w$ and $\delta$ in \eqref{eq:final-delta}.
\begin{equation*}
	\begin{split}
		\cE_5 &= \left\{ \sum_{i=1}^n B_i \geq (1 - \delta)n \right\},\\
		\cE_6 &= \left\{ \sup_{p \in \cF_{m_{\delta},d} } \left|\sum_{i=1}^n w_i p(X_i - \wh{\theta}) - \bbE_{N(0,I_d)}[p(X)]\right| \leq 2 (1 + 4/\alpha) \sqrt{ \frac{N}{ \sum_{i=1}^n B_i } }  \right\}.
	\end{split}
\end{equation*} The claim is that 
\begin{equation}
	\begin{split}
		\bbP(\cE_5) \geq 1 - \alpha/4 \text{ and } \bbP(\cE_6) \geq 1 - \alpha/4,
	\end{split}
\end{equation} where the randomness is taken with respect to $\wh{\theta}$, $\{X_i\}_{i=1}^n$ and $B_i$. The proof of this claim will be postponed to 
Section \ref{sec:proof-high-prob-event}.

Given $\cE_1$, $\cE_5$ and $\cE_6$ happen, it is easy to check that $\texttt{LP}(\delta)$ is non-empty with the constructed weights $w_i = \frac{B_i}{\sum_{i=1}^n B_i}$. Therefore, given events $\cE_1, \cE_5, \cE_6$ happen, we have
\begin{equation*}
	\wh{\epsilon} \overset{\cE_5, \cE_6}\leq \delta \text{ in }\eqref{eq:final-delta} \lesssim  \left(\sqrt{r} + \epsilon + \sqrt{ \frac{\log(1/\alpha)}{n} } \right) \overset{\cE_1}\lesssim   \left( \sqrt{\frac{d}{n}} + \sqrt{\frac{\log(1/\alpha)}{n} } + \epsilon \right)^{0.05},
\end{equation*} where in the first inequality, we also use the fact that $\wh{\epsilon}$ is the smallest $\delta \in \Delta$ such that $\texttt{LP}(\delta)$ is non-empty. Finally, the desired claim follows as $\bbP(\cE_1 \cap \cE_5 \cap \cE_6 ) \geq 1 - \alpha$.


\subsection{Proof of Claim \eqref{ineq:high-prob-event} for High Probability Events } \label{sec:proof-high-prob-event}
{\noindent \bf  Event $\cE_1$.} $\cE_1$ is achieved by the choice of the robust estimator we use.

\vskip.3cm
{\noindent \bf  Event $\cE_2$.} Here the result follows from a direct application of Bernstein's inequality for the Binomial; see Lemma \ref{lm:Binomial-prop} (setting $C = 2$ there).

\vskip.3cm
{\noindent \bf  Event $\cE_3$.} To show $\bbP(\cE_3) \geq 1 - \alpha/4$, it suffices to show that $\bbP(\cE_3| n_0, \wh{\theta} ) \geq 1 - \alpha/4$ for all $n_0$ 
and $\wh{\theta}$. Notice that given any $\wh{\theta}$, $n_0$ and $\delta \in \Delta$, by Chebyshev's inequality we have
\begin{equation*}
	\begin{split}
		&\bbP\left(   \left| \frac{1}{n_0} \sum_{i=1}^{n_0} Z_{\delta, i} \indi( Z_{\delta, i} \leq F_{\delta}^{-1}(1 - 2\delta) ) - \bbE_{Z \sim P_{\delta}}[Z \indi( Z \leq F_{\delta}^{-1}(1 - 2\delta) )]  \right| >   \sqrt{\frac{4\bbE_{P_{\delta} }[Z^2] \log n}{ n_0 \alpha} }  \right) \\
		& \leq \frac{\alpha}{ 4 \log n}. 
	\end{split}
\end{equation*}  Then the final result follows from a union bound on $\delta \in \Delta$ given $|\Delta| \leq \log n$ and the fact $ \bbE_{P_{\delta}}[Z^2] = \bbE_{N(r,1)}[X^{2m}] \leq 2 \exp(3r\sqrt{2m}/2) (2m/e)^{m}$ where the inequality is by Lemma \ref{lm:gaussian-tail} (iii).

\vskip.3cm
{\noindent \bf  Event $\cE_4$.} Given $\wh{\theta}$ and $n_0$, it follows by the 
the DKW inequality (see Lemma \ref{lm:DKW}). Since the result holds for any $\wh{\theta}$ and $n_0$, the result holds unconditionally as well.

\vskip.3cm 
{\noindent \bf  Event $\cE_5$.} Since $B_i \overset{iid}\sim \text{Bernoulli}(1 - \delta/2)$, 
\begin{equation*}
	\begin{split}
		 \bbP\left(\sum_{i=1}^n B_i < n(1 - \delta)\right) =  \bbP\left(\text{Binomial}(n,\delta/2) > n \delta \right) \leq  e^{-n\delta/6} \leq \alpha/4 \;,
	\end{split}
\end{equation*} where the first inequality is by the one-sided Chernoff bound and the second inequality is because $\delta \geq \sqrt{ \frac{\log(1/\alpha)}{n} }$ by definition \eqref{eq:final-delta} and $n \gtrsim \log(1/\alpha)$.

\vskip.3cm
{\noindent \bf  Event $\cE_6$.} First, given $\wh{\theta}$, by \eqref{ineq:rejection-sampling}, we know that given $B_i = 1$, $X_i - \wh{\theta} \sim D$. Moreover, we know the distribution $D$ satisfies $D\equiv_{2m_{\delta}} N(0, I_d)$. The following lemma provides the key concentration inequality we need; its proof is given in the subsequent subsections.
\begin{Lemma} \label{eq:assump-concentration}
	 Assume $ d, m$ are positive integers. Suppose the distribution $D$ supported on $\bbR^d$ satisfies that $\bbE_D[p(X)] = \bbE_{N(0, I_d)}[p(X)]$ for any polynomial $p \in \cP_{2m,d}$. Let $X_1, \ldots, X_{n}$ be i.i.d. samples drawn from $D$. Then with probability at least $1 - \alpha$  
\begin{equation} \label{ineq:D-concentration}
	\sup_{p \in \cF_{m, d} }\left|\frac{1}{n} \sum_{i=1}^n p(X_i) - \bbE_{N(0,I_d)}[p(X)]\right| \leq 2(1 + 1/\alpha) \sqrt{ \frac{N}{n} },
\end{equation} where $N = {d+m \choose m}$ and $\cF_{m,d} = \{p \in \cP_{m,d}: \bbE_{N(0, I_d)}[p^2(X)] = 1 \}$.
\end{Lemma}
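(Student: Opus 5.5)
We reduce the supremum over $\cF_{m,d}$ to a vector norm in the Hermite basis, exactly as in the derivation of \eqref{ineq:extend-to-all-polynomial}. Write $h_1,\ldots,h_N$ for the elements of $\cH_{m,d}$. Every $p\in\cF_{m,d}$ has the form $p=\langle \eta,\Phi_m\rangle$ with $\|\eta\|_2=1$, where $\Phi_m(x)=(h_j(x))_{j=1}^N$. By Cauchy--Schwarz,
\[
\sup_{p\in\cF_{m,d}}\Bigl|\frac1n\sum_{i=1}^n p(X_i)-\bbE_{N(0,I_d)}[p(X)]\Bigr|
\le \Bigl\|\frac1n\sum_{i=1}^n \Phi_m(X_i)-\bbE_{N(0,I_d)}[\Phi_m(X)]\Bigr\|_2 .
\]
It therefore suffices to show that the random vector $S=\frac1n\sum_{i=1}^n(\Phi_m(X_i)-\mu)$, where $\mu=\bbE_{N(0,I_d)}\Phi_m$, satisfies $\|S\|_2\le 2(1+1/\alpha)\sqrt{N/n}$ with probability at least $1-\alpha$.

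The key observation is that the degree-$2m$ moment matching fixes both the mean and the second moments of $\Phi_m(X)$ under $D$. Each coordinate $h_j$ has degree at most $m$, so $\bbE_D[h_j]=\bbE_{N(0,I_d)}[h_j]=\mu_j$, and $S$ has mean zero. Each product $h_jh_k$ has degree at most $2m$, so $\bbE_D[h_jh_k]=\bbE_{N(0,I_d)}[h_jh_k]=\indi\{j=k\}$ by orthonormality of the Hermite basis. Consequently, summing the variances over coordinates,
\[
\bbE\|S\|_2^2=\frac1n\sum_{j=1}^N \mathrm{Var}_D(h_j(X))\le \frac1n\sum_{j=1}^N\bbE_D[h_j^2]=\frac{N}{n}.
\]

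Markov's inequality applied to $\|S\|_2^2$ then gives $\bbP(\|S\|_2^2>N/(\alpha n))\le\alpha$. Hence, with probability at least $1-\alpha$, $\|S\|_2\le\sqrt{N/(\alpha n)}$. Since $\alpha^{-1/2}\le 1+1/\alpha\le 2(1+1/\alpha)$ for $\alpha\in(0,1)$, this yields the claimed bound \eqref{ineq:D-concentration}.

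No step here is a real obstacle. The only points requiring care are checking that the degree bound $\deg(h_jh_k)\le 2m$ lies within the matched moment range, so that the Gaussian second-moment identity transfers to $D$, and noting that $N$ enters only through the trace $\sum_j\bbE_D[h_j^2]=N$, with no dependence on the off-diagonal structure beyond orthonormality.
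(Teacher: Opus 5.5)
Your proof is correct, and it differs from the paper's mainly in how the norm $\|S\|_2$ is controlled.

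The two arguments share the same backbone: the Hermite feature map, Cauchy--Schwarz to turn the supremum into a Euclidean norm, and the degree-$2m$ matching to get $\bbE_D\|\Phi_m(X)\|_2^2=N$.

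\textbf{The paper's route.} It first applies the symmetrization inequality, $\bbE[Z]\le 2\,\bbE\|\frac1n\sum_i\epsilon_i\Phi_m(X_i)\|_2$, and then Jensen to get $\bbE[Z]\le 2\sqrt{N/n}$. Markov's inequality applied to $Z$ itself then gives $\bbP(Z\ge 2(1+1/\alpha)\sqrt{N/n})\le \alpha/(1+\alpha)$.

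\textbf{Your route.} You observe that the supremum is exactly the norm of a mean-zero average of i.i.d.\ vectors; the mean of $\Phi_m$ under $D$ is already fixed by matching up to degree $m\le 2m$. The cross terms therefore vanish, and $\bbE\|S\|_2^2\le N/n$ follows directly, with no Rademacher variables and no factor $2$. Applying Markov to $\|S\|_2^2$ (a Chebyshev-type step) then yields $\sqrt{N/(\alpha n)}$. This $\alpha^{-1/2}$ dependence improves on the $\alpha^{-1}$ scaling of the paper's bound.

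\textbf{Comparison.} Symmetrization is a generic empirical-process tool, but here the supremum is exactly a norm, so it buys nothing. Your version is shorter and quantitatively stronger, and the stated lemma follows from it since $\alpha^{-1/2}\le 2(1+1/\alpha)$.
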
 By Lemma \ref{eq:assump-concentration} and the property of $D$, we know that 
\begin{equation*}
\begin{split}
	&\bbP\left\{ \sup_{p \in \cF_{m_{\delta},d} } \left|\sum_{i=1}^n w_i p(X_i - \wh{\theta}) - \bbE_{N(0,I_d)}[p(X)]\right| \leq 2 (1 + 4/\alpha) \sqrt{ \frac{N}{ \sum_{i=1}^n B_i } }  \right\} \\
	& \overset{(a)} = \bbP\left(\bbP\left\{ \sup_{p \in \cF_{m_{\delta},d} } \left|\sum_{i=1}^T \frac{1}{T}  p(W_i) - \bbE_{N(0,I_d)}[p(X)]\right| \leq 2 (1 + 4/\alpha) \sqrt{ \frac{N}{ T } }  \right\} \Big| \sum_{i=1}^n B_i = T, \wh{\theta} \right) \\
	& \geq 1 - \alpha/4 \;,
\end{split}
\end{equation*} 
where in the second line $W_i \overset{iid}\sim D$.

\subsection{Proof of Lemma \ref{eq:assump-concentration} }
First, we observe that to prove the result, we just need to show that given an i.i.d.\ sample $X_1, \ldots, X_{n}$ drawn from $D$,
\begin{equation} \label{ineq:equivalent-statement}
	Z:=\sup_{p \in \cF_{m,d}}\left|\frac{1}{n} \sum_{i=1}^n p(X_i) - \bbE_{D}[p(X)]\right| \leq 2  (1 + 1/\alpha) \sqrt{ \frac{N}{n} }
\end{equation} holds with probability at least $1 - \alpha$. 

For any polynomial $p \in \cF_{m,d}$, we can represent it using the Hermite basis $ \cH_{m,d}:= \{ h_{\alpha}(x) = \prod_{i \in [d]} h_{\alpha_i}(x_i): \alpha \in \bbN^{d}, \sum_{i=1}^d \alpha_i \leq m \}$:
\begin{equation*}
	p(x) = \langle \eta, \Phi_{m}(x) \rangle, \text{ for some } \eta \in \bbR^N \text{ and }\|\eta\|_2 = 1.
\end{equation*} where $N = {d+m \choose m}$ denotes the cardinality of the Hermite basis. Here $ \Phi_{m}(x)  \in \bbR^N$ and each coordinate represents a canonical basis of the Hermite polynomial with respect to $N(0, I_d)$. Equivalently,
$\bbE_{N(0, I_d)}[\Phi_{m}(X) \Phi^{\top }_{m}(X) ] = I_N$. 

Next, we show an upper bound for $\bbE[Z]$. By the symmetrization inequality, see Lemma 11.4 in \cite{boucheron2013concentration}, we have 
\begin{equation*}
\begin{split}
	\bbE \left( \sup_{p \in \cF_{m,d}} \left|\frac{1}{n} \sum_{i=1}^n p(X_i) - \bbE_{D}[p(X)]\right|  \right) &\leq 2 \bbE \sup_{p \in \cF_{m,d}} \left |\frac{1}{n}\sum_{i=1}^n \epsilon_i  p(X_i) \right| \\
	&=2 \bbE_{D} \sup_{\eta\in \bbR^N: \|\eta\|_2 = 1} \left| \langle \eta, \frac{1}{n} \sum_{i=1}^n \epsilon_i \Phi_{m}(X_i) \rangle    \right| \\
	& = 2 \bbE_{D}  \left\| \frac{1}{n} \sum_{i=1}^n \epsilon_i \Phi_{m}(X_i)   \right\|_2 \;,
\end{split}
\end{equation*} 
where $\epsilon_i$ are i.i.d.\ Rademacher random variables. 
Then
\begin{equation*}
	\begin{split}
		\bbE_{D}  \left\| \frac{1}{n} \sum_{i=1}^n \epsilon_i \Phi_{m}(X_i)   \right\|_2 &\leq \frac{1}{n} \sqrt{ \bbE_{D}  \left\|  \sum_{i=1}^n \epsilon_i \Phi_{m}(X_i)   \right\|^2_2  } \\
		&= \frac{1}{n} \sqrt{ n\bbE_{D} \left\|\epsilon \Phi_{m}(X)   \right\|^2_2  }  \overset{(a)}= \frac{1}{n} \sqrt{n \bbE_{N(0, I_d)}  \left\| \epsilon \Phi_{m}(X)   \right\|^2_2  }\\
		& =\sqrt{ \frac{\bbE_{N(0, I_d)}\left\|\Phi_{m}(X)   \right\|^2_2}{n} } = \sqrt{ \frac{N}{n} } \;,
	\end{split}
\end{equation*} 
where (a) is because $\left\| \epsilon \Phi_{m}(X)   \right\|^2_2$ 
is a polynomial of degree at most $2m$ and the moment matching condition of $D$. Then, combining the above results, we get 
\begin{equation} \label{ineq:expectation-Z-bound}
	\bbE[Z] \leq 2 \sqrt{\frac{N}{n} } \;.
\end{equation}
The deviation bound can be obtained by a simple application of the Markov inequality and the upper bound in \eqref{ineq:expectation-Z-bound},
\begin{equation*}
	\bbP\left(Z \geq  2(1+\frac{1}{\alpha}) \sqrt{ \frac{N}{n} }  \right) \leq \frac{\bbE[Z] }{2(1+\frac{1}{\alpha}) \sqrt{ \frac{N}{n} }} \leq \frac{\alpha}{1 + \alpha} \leq \alpha.
\end{equation*}

\section{Additional Lemmas}  \label{sec:proof-additional-lemmas}

In the following lemma, we collect a few standard Gaussian integration properties.
\begin{Lemma} \label{lm:gaussian-tail} Suppose $m$ is an even positive integer. 
	\begin{enumerate}
		\item[(i)] Then $\bbE_{N(0,1)}[X^m] = (m-1)!! = \sqrt{\frac{2}{\pi}} \int_{0}^{\infty} x^m  \exp(-x^2/2) dx \in \left[ \frac{\sqrt{2}}{e} (m/e)^{m/2}, \sqrt{2}e (m/e)^{m/2} \right]$.
		\item[(ii)] For any $m \geq 20$ and $t \geq \sqrt{2m}$, $\int_{\sqrt{m/e}}^{t} x^m  \exp(-x^2/2) dx \geq \frac{\sqrt{\pi}}{2e} (m/e)^{m/2}$.
		\item[(iii)] Given any $r > 0$, we have $\bbE_{N(r,1)}[X^m] \leq 2 \exp(3r\sqrt{m}/2) (m/e)^{m/2}$.
		\item[(iv)] Given any $r > 0$, denote the CDF of $X^m$ where $X \sim N(r,1)$ as $F(\cdot)$. For any $m \geq 20$, any $\delta \in (0,1/2)$, when $\Phi^{-1}(1 - \delta/2) - r \geq \sqrt{2m}$, we have
		\begin{equation*}
			\bbE_{N(r,1)}[X^m \indi(X^m \leq F^{-1}(1 - \delta))] \geq  \frac{1}{2\sqrt{2}e} \exp(-r^2/2 + r \sqrt{m/e}) (m/e)^{m/2}.
		\end{equation*}
	\end{enumerate}
\end{Lemma}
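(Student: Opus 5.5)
The lemma has four parts. Parts (i)--(iii) are one-dimensional Laplace-type estimates for the log-integrand $g(x)=m\log x-x^2/2$, which is concave on $(0,\infty)$ and maximized at $x=\sqrt m$. Part (iv) is a short consequence of (ii).

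For (i), I will write $\bbE_{N(0,1)}[X^m]=\sqrt{2/\pi}\int_0^\infty x^m e^{-x^2/2}dx$ by symmetry. The substitution $y=x^2/2$ gives $(m-1)!!=2^{m/2}\Gamma((m+1)/2)/\sqrt{\pi}$. Two-sided Stirling bounds for $\Gamma((m+1)/2)$ then give $(m/e)^{m/2}$ up to factors between $\sqrt2/e$ and $\sqrt2 e$; these constants are loose enough that crude Stirling suffices.

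For (ii), I write $\int_{\sqrt{m/e}}^{t}\ge \int_0^\infty-\int_0^{\sqrt{m/e}}-\int_{\sqrt{2m}}^\infty$ and use (i) for the full integral, which is $\sqrt{\pi/2}(m-1)!!\ge \sqrt{\pi}e^{-1}(m/e)^{m/2}$. It then suffices to show each tail is at most a quarter of this when $m\ge20$.
\begin{itemize}
\item Lower tail: $g$ is increasing on $[0,\sqrt{m/e}]$, so this piece is at most $\sqrt{m/e}\,e^{g(\sqrt{m/e})}=\sqrt{m/e}\,(m/e)^{m/2}e^{-m/(2e)}$. This is a small multiple of $(m/e)^{m/2}$ once $m\ge 20$, since $e^{-m/(2e)}$ beats $\sqrt{m}$.
\item Upper tail: concavity gives $g(x)\le g(\sqrt{2m})+g'(\sqrt{2m})(x-\sqrt{2m})$ with $g'(\sqrt{2m})=-\sqrt{m/2}$. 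Hence this piece is at most $\sqrt{2/m}\,e^{g(\sqrt{2m})}=\sqrt{2/m}\,(m/e)^{m/2}2^{m/2}e^{-m/2}$, which is exponentially small relative to $(m/e)^{m/2}$.
\end{itemize}
Checking the numerics at $m=20$ and monotonicity beyond is the only fiddly step.

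For (iii), I expand $\bbE_{N(r,1)}[X^m]=\int x^m\phi(x)e^{rx-r^2/2}dx$ and split at $|x|\le \tfrac32\sqrt m$.
\begin{itemize}
\item Inner region: $e^{rx}\le e^{3r\sqrt m/2}$, so this piece is at most $e^{3r\sqrt m/2}\int_{|x|\le 1.5\sqrt m}x^m\phi$.
\item Outer region: I bound $x^m\phi(x-r)$ using $g$ evaluated past $1.5\sqrt m$ together with the shift, to show this piece is dominated.
\end{itemize}
To reach the constant $2$ rather than the $\sqrt2e$ from (i), I will not use (i) on the inner piece. Instead I compute $\int_{|x|\le1.5\sqrt m}x^m\phi$ directly by a Laplace bound around $\sqrt m$, where the Gaussian width gives the factor $\sqrt{2\pi}\cdot\phi$-normalization$\approx1$. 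I expect tightening these constants to be the main obstacle. If the direct Laplace bound is not tight enough, I will absorb slack into the exponent, using $e^{3r\sqrt m/2}$ versus the natural $e^{r\sqrt m}$.

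For (iv), write $t^*=(F^{-1}(1-\delta))^{1/m}$. Then $X^m\le F^{-1}(1-\delta)$ iff $|X|\le t^*$, and $\bbP(|X|>s)\le 2(1-\Phi(s-r))$. Taking $s=\Phi^{-1}(1-\delta/2)+r$ shows $t^*\ge \Phi^{-1}(1-\delta/2)-r\ge\sqrt{2m}$. Therefore the truncated moment is at least $\int_{\sqrt{m/e}}^{\sqrt{2m}}x^m\phi(x-r)dx$. On this range $\phi(x-r)=\phi(x)e^{rx-r^2/2}\ge\phi(x)e^{r\sqrt{m/e}-r^2/2}$. Applying (ii) with $t=\sqrt{2m}$ gives
\[
\frac{1}{\sqrt{2\pi}}\cdot\frac{\sqrt\pi}{2e}\,(m/e)^{m/2}e^{r\sqrt{m/e}-r^2/2}=\frac{1}{2\sqrt2e}\,e^{-r^2/2+r\sqrt{m/e}}(m/e)^{m/2},
\]
which is exactly the claimed bound.
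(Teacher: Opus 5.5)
Your parts (i) and (ii) are fine and essentially follow the paper; the tangent-line bound at $\sqrt{2m}$ for the upper tail in (ii) is a clean alternative to the paper's $e^{-x^2/4}$ splitting. There is a genuine gap in (iii), and a reversed inequality in (iv).

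\textbf{Part (iii).} Your split at the fixed threshold $|x|\le\tfrac32\sqrt m$ is not adapted to $r$. On $x>0$ the integrand $x^m\phi(x-r)$ peaks at $x^*=(r+\sqrt{r^2+4m})/2$. This exceeds $\tfrac32\sqrt m$ as soon as $r>\tfrac56\sqrt m$, and for $r\gg\sqrt m$ essentially the whole moment lies in your outer region.

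There the bound $e^{rx}\le e^{3r\sqrt m/2}$ is useless, since $x$ is unbounded. Using $g$ past $\tfrac32\sqrt m$ also gives nothing uniform in $r$. For instance, the log-integrand $g(x)+rx$ has slope $r-\tfrac56\sqrt m$ at $\tfrac32\sqrt m$, so a tangent-line bound there degenerates as $r\uparrow\tfrac56\sqrt m$ and fails beyond. Bounding the outer piece is therefore the whole content of (iii), and your sketch gives no argument for it. Moreover, for large $m$ your inner bound already uses roughly $\sqrt2$ of the constant $2$. So that outer estimate would need an explicit constant below $2-\sqrt2$, uniformly in $r$ and $m$.

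The missing idea is to do the Laplace bound around the $r$-dependent maximizer.
\begin{enumerate}
\item Reflect. For $r>0$ and even $m$, $\int_{-\infty}^0x^m\phi(x-r)\,dx=\int_0^\infty x^m\phi(x+r)\,dx\le\int_0^\infty x^m\phi(x-r)\,dx$. This is where the constant $2$ comes from.
\item Use strong concavity. With $f(x)=-x^2/2+rx+m\log x$ you have $f''\le -1$, so $f(x)\le f(x^*)-(x-x^*)^2/2$ and $\int_0^\infty e^{f}\le\sqrt{2\pi}\,e^{f(x^*)}$. Hence $\bbE_{N(r,1)}[X^m]\le 2e^{f(x^*)-r^2/2}$.
\item Bound $f(x^*)$. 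Since $f(x^*)=x^{*2}/2-m+m\log x^*$, the elementary bounds $\sqrt{r^2+4m}\le r+2\sqrt m$, $x^*\le\sqrt m+r$ and $m\log(1+r/\sqrt m)\le r\sqrt m$ give $f(x^*)-r^2/2\le -m/2+\tfrac m2\log m+\tfrac32 r\sqrt m$.
\end{enumerate}
No sharpening of Stirling is needed, so the constants are not the real obstacle; the large-$r$ regime is.

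\textbf{Part (iv).} Your final computation is right, but the lower bound on $t^*$ is justified in the wrong direction. From $\bbP(|X|>s)\le 2(1-\Phi(s-r))$ at $s=\Phi^{-1}(1-\delta/2)+r$ you get $\bbP(|X|>s)\le\delta$. That means $t^*\le\Phi^{-1}(1-\delta/2)+r$, which is an upper bound.

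For the lower bound you need a lower bound on the tail: $\bbP(|X|>s)=\bigl(1-\Phi(s-r)\bigr)+\bigl(1-\Phi(s+r)\bigr)\ge 2\bigl(1-\Phi(s+r)\bigr)$. This equals $\delta$ at $s=\Phi^{-1}(1-\delta/2)-r$. Since $s\mapsto\bbP(|X|>s)$ is continuous and strictly decreasing, you get $t^*\ge\Phi^{-1}(1-\delta/2)-r\ge\sqrt{2m}$. This is the content of the paper's inclusion $\{|X+r|\le t^{1/m}\}\subseteq\{|X|\le t^{1/m}+r\}$ for $X\sim N(0,1)$. With this fix the rest of your (iv) goes through.
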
\begin{proof}
	{\bf Proof of Part (i).} The first equality in (i) is a standard fact. Moreover, by the equality $(m-1)!! = \frac{m!}{2^{m/2} (m/2)!} $ and the bound for the factorial \citep{robbins1955remark},
	\begin{equation*}
		\sqrt{2\pi m} (m/e)^m \leq m! \leq e\sqrt{2\pi m} (m/e)^m,
	\end{equation*} we get $(m-1)!! \in \left[ \frac{\sqrt{2}}{e} (m/e)^{m/2}, \sqrt{2}e (m/e)^{m/2} \right]$.
	
	{\bf Proof of Part (ii).} 
	In view of what we have proved in Part (i), to show the second statement, we just need to show 
	\begin{subequations}
		\begin{align}
			&\int^{\sqrt{m/e}}_0 x^m  \exp(-x^2/2)dx \leq\frac{1}{4} \frac{\sqrt{\pi}}{e} (m/e)^{m/2}, \label{ineq:lower-tail}\\
			&\int_{t}^{\infty} x^m  \exp(-x^2/2)dx\leq\frac{1}{4} \frac{\sqrt{\pi}}{e} (m/e)^{m/2}. \label{ineq:upper-tail}
		\end{align}
	\end{subequations}
	
First, notice that $x^m \exp(-x^2/2) = \exp( -x^2/2 + m \log x )$ and this function increases when $x \in [0, \sqrt{m}]$ and decreases when $x \geq \sqrt{m}$. Thus
	\begin{equation*}
		\begin{split}
			\int^{\sqrt{m/e}}_0 x^m  \exp(-x^2/2)dx \leq \sqrt{m/e} (m/e)^{m/2} \exp(-m/(2e)) &\overset{(a)}\leq \frac{1}{4} \frac{\sqrt{\pi}}{e} (m/e)^{m/2}, 
		\end{split}
	\end{equation*} where (a) holds whenever $m \geq 20$.
	
	Next,
	\begin{equation*}
		\begin{split}
			\int_{t}^{\infty} x^m  \exp(-x^2/2)dx &\leq \sup_{x \geq t} \{x^m \exp(-x^2/4) \} \int_{t}^{\infty} \exp(-x^2/4) dx  \overset{(a)}\leq (\sqrt{2m})^{m} \exp(-m/2)  \int_{t}^{\infty} \exp(-x^2/4) dx\\
			& \overset{(b)}\leq 2(2m/e)^{m/2} \exp(-t^2/4) \overset{(c)}\leq 2(2m/e)^{m/2} \exp(-m/2) = 2(\frac{2m}{e^2} )^{m/2}\\
			& \leq \frac{1}{4} \frac{\sqrt{\pi}}{e} (m/e)^{m/2}, 
		\end{split}
	\end{equation*} where (a) is because $\exp(m \log x -x^2/4)$ is increasing when $x \in [0, \sqrt{2m}]$ and decreasing when $x \geq \sqrt{2m}$; (b) is because of the Gaussian tail integration bound, see Theorem 1.2.3 in \cite{durrett2019probability}; (c) is because $t \geq \sqrt{2m}$ and $m \geq 20$. 

{\bf Proof of Part (iii).} 
\begin{equation} \label{ineq:shift-normal-mean-up}
	\begin{split}
		\bbE_{N(r,1)}[X^m] &= \int_{-\infty}^{\infty} x^m \phi(x-r) dx \leq 2 \int_{0}^{\infty} x^m \phi(x-r) dx \\
		& = \frac{2}{\sqrt{2\pi}} \exp(-r^2/2) \int_{0}^{\infty} \exp(-x^2/2 + xr + m \log x ) dx.
	\end{split}
\end{equation} Let $f(x) = -x^2/2 + xr + m \log x$, then $f'(x) = -x + r + m/x$ and $f''(x) = -1 - m/x^2 \leq -1$. Notice that $f'(x)$ has a positive root $x^* = \frac{r + \sqrt{r^2 + 4m} }{2} $. So, by a Taylor expansion, we have for all $x \geq 0$, $f(x) \leq f(x^*) - \frac{1}{2} (x - x^*)^2$ and at the same time,
\begin{equation*}
\begin{split}
	f(x^*) &  = -m + \frac{x^{*2}}{2} + m \log(x^*)\\
	& = -\frac{m}{2} + \frac{1}{4}r \sqrt{r^2 + 4m} + \frac{r^2}{4} + m \log\left( \frac{r + \sqrt{r^2 + 4m}}{2} \right) \\
	& \leq -\frac{m}{2}+\frac{r^2}{2} + \frac{r\sqrt{m}}{2} + m \log(\sqrt{m} + r) \quad(\text{since} \sqrt{a+b} \leq \sqrt{a} + \sqrt{b})\\
	& \leq  -\frac{m}{2}+ m \log \sqrt{m} +\frac{r^2}{2} + \frac{3r\sqrt{m}}{2}.
\end{split}
\end{equation*} By plugging the above inequalities into \eqref{ineq:shift-normal-mean-up},
\begin{equation*}
	\begin{split}
		\bbE_{N(r,1)}[X^m] &\leq  \frac{2}{\sqrt{2\pi}} \exp(-r^2/2) \int_{0}^{\infty} \exp\left(  -\frac{m}{2}+ m \log \sqrt{m}+ \frac{r^2}{2} + \frac{3r\sqrt{m}}{2} - \frac{1}{2} (x - x^*)^2 \right) dx \\
		& \leq 2 \exp(3r\sqrt{m}/2)  (m/e)^{m/2}.
	\end{split}
\end{equation*} 

{\bf Proof of Part (iv).} Let $t = F^{-1}(1 - \delta)$. Then by definition, $\bbP_{N(r,1)} \left( X^m \geq t  \right) = \delta$. Since $\delta \in (0,1/2)$, $t^{1/m} \geq r$. This means that 
\begin{equation} \label{ineq:t-range}
\begin{split}
\bbP_{N(0,1)}((X+r)^m \geq t )= \delta  &\Longrightarrow \bbP_{N(0,1)}(|X| \leq t^{1/m}- r) \leq 1 - \delta \text{ and } \bbP_{N(0,1)}(|X| \leq t^{1/m}+ r) \geq 1 - \delta \\
	&  \Longleftrightarrow \Phi^{-1}(1 - \delta/2) - r \leq t^{1/m} \leq \Phi^{-1}(1 - \delta/2) + r.
\end{split}	
\end{equation} Thus,
\begin{equation*}
	\begin{split}
		\bbE_{N(r,1)}[X^m \indi(X^m \leq F^{-1}(1 - \delta))] & = \int^{t^{1/m}}_{-t^{1/m}} x^m \phi(x - r) d x \\
		& = \frac{1}{\sqrt{2 \pi}} \exp(-r^2/2) \int^{t^{1/m}}_{-t^{1/m}} x^m \exp(-x^2/2 + xr) d x \\
		& \geq  \frac{1}{\sqrt{2 \pi}} \exp(-r^2/2) \int^{t^{1/m}}_{0} x^m \exp(-x^2/2 + xr) d x \\
		& \overset{ \eqref{ineq:t-range} }\geq  \frac{1}{\sqrt{2 \pi}} \exp(-r^2/2) \int^{\Phi^{-1}(1 - \delta/2) - r }_{0} x^m \exp(-x^2/2 + xr) d x \\
		& \geq \frac{1}{\sqrt{2 \pi}} \exp(-r^2/2) \int^{\Phi^{-1}(1 - \delta/2) - r }_{\sqrt{m/e}} x^m \exp(-x^2/2 + xr) d x \\
		& \geq \frac{1}{\sqrt{2 \pi}} \exp(-r^2/2 + r \sqrt{m/e}) \int^{\Phi^{-1}(1 - \delta/2) - r }_{\sqrt{m/e}} x^m \exp(-x^2/2) d x \\
		& \overset{(a)}\geq  \frac{1}{2\sqrt{2}e} \exp(-r^2/2 + r \sqrt{m/e}) (m/e)^{m/2} \;,
	\end{split}
\end{equation*} where (a) holds since $\Phi^{-1}(1 - \delta/2) - r \geq \sqrt{2m}$ and Lemma \ref{lm:gaussian-tail} (ii).
This finishes the proof.
\end{proof}

The following is the Dvoretzky-Kiefer-Wolfowitz-Massart inequality (DKW inequality) from \cite{massart1990tight}.
\begin{Lemma}\label{lm:DKW}
	Suppose $n$ is a positive integer. Let $X_1, \ldots, X_n$ be i.i.d.\ real-valued random variables drawn from a distribution with CDF $F(\cdot)$. Then for any $\alpha \in (0,1)$,
	\begin{equation*}
		\bbP\left( \sup_{t \in \bbR} |F_n(t) - F(t)| > \sqrt{ \frac{\log(2/\alpha)}{2n} } \right) \leq \alpha,
	\end{equation*}where $F_n(\cdot)$ is the empirical CDF.
\end{Lemma}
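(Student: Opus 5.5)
The statement to prove is the DKW inequality with Massart's tight constant: for i.i.d.\ real $X_1,\ldots,X_n$ with CDF $F$,
\[
\mathbb{P}\Bigl(\sup_t |F_n(t)-F(t)| > \lambda\Bigr) \le 2e^{-2n\lambda^2}.
\]
Setting $\lambda=\sqrt{\log(2/\alpha)/(2n)}$ makes the right-hand side equal to $\alpha$, so it suffices to prove this tail bound. The paper cites \cite{massart1990tight} and would simply invoke it. If I had to reprove it, my plan has three steps.

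\textbf{Step 1: reduce to the uniform case.} Write $X_i = F^{-1}(U_i)$ with $U_i$ i.i.d.\ uniform on $[0,1]$. Then $\{X_i\le t\} = \{U_i \le F(t)\}$. Hence
\[
\sup_t |F_n(t)-F(t)| = \sup_{s\in \mathrm{range}(F)} |G_n(s)-s| \le \sup_{s\in[0,1]} |G_n(s)-s|,
\]
where $G_n$ is the uniform empirical CDF. So it suffices to prove the bound for $F(s)=s$ on $[0,1]$.

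\textbf{Step 2: bound each one-sided tail by $e^{-2n\lambda^2}$.} A union bound over the two sides then gives the factor $2$.

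\textbf{Step 3: the sharp one-sided bound.} This is the main obstacle. Cheap routes lose constants. A discretization plus Hoeffding at grid points loses a $\log$ factor, and the original DKW argument gives an unspecified constant $C$ in front of the exponential. For a slightly weaker but adequate substitute, I would first try a chaining-free argument: pick grid points $s_j = j\lambda/2$, apply Hoeffding at each, and use monotonicity of $G_n$ between grid points. This yields a bound of the form $(4/\lambda)e^{-n\lambda^2/2}$, and changes only constants in the paper's downstream events.

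For the exact Massart constant, I would follow his route. The one-sided crossing probability has an exact representation (Smirnov/Birnbaum–Tingey formula) as a finite sum over the first index $j$ where $G_n$ exceeds $s+\lambda$. One then bounds this sum by comparing it term by term with the Brownian bridge crossing probability $e^{-2n\lambda^2}$, handling separately the regime $\lambda \le \sqrt{\log 2/(2n)}$, where the bound is trivial. The delicate part is the monotonicity and convexity estimates on the binomial-type terms needed to show the comparison holds uniformly in $n$. I expect that to be where most of the effort goes, and I would lean on Massart's published lemmas rather than re-deriving them.
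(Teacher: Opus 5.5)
Your proposal is correct and matches the paper's approach. The paper gives no proof and simply cites Massart's tight-constant DKW inequality. Your plan likewise reduces to the uniform case, splits into two one-sided tails, and defers the sharp one-sided bound $e^{-2n\lambda^2}$ to Massart, which applies directly here because $\alpha<1$ gives $e^{-2n\lambda^2}=\alpha/2<1/2$.

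One caution about your fallback. The grid-plus-Hoeffding bound $(4/\lambda)e^{-n\lambda^2/2}$ is exactly the discretization you yourself said loses a logarithm. It would add a $\log n$ term to the threshold and force enlarging $\delta_{\min}$ in the grid $\Delta$ to order $\sqrt{\log n/n}$, so it changes more than constants downstream.
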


The following lemma can be viewed as a uniform DKW inequality for the projection of i.i.d. data drawn from a multivariate distribution onto all possible directions in the unit sphere.

\begin{Lemma}\label{lm:multi-DKW}
	Suppose $n$ is a positive integer. Let $X_1, \ldots, X_n$ be i.i.d. real-valued random vectors drawn from a distribution supported on $\bbR^d$. Given every $u \in \cS^{d-1}$, define the projected CDF and empirical CDF on the direction $u$ as $F(t; u) = \bbP(X^\top u \leq t)$ and $F_n(t; u) = \frac{1}{n} \sum_{i \in [n]} \indi(X_i^\top u \leq t)$, respectively. Then for any $\alpha \in (0,1)$,
	\begin{equation*}
		\bbP\left( \sup_{t \in \bbR, u \in \cS^{d-1}} |F_n(t;u) - F(t;u)| > 150 \sqrt{ \frac{(d+1)}{n} } + \sqrt{ \frac{\log(1/\alpha)}{2n} } \right) \leq \alpha.
	\end{equation*}
\end{Lemma}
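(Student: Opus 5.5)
The natural route is to reduce the uniform statement to a VC-type uniform deviation bound for the class of halfspaces, combined with a bounded-differences concentration step. Write $\mathcal{C}=\{\{x: x^\top u\le t\}: u\in\cS^{d-1}, t\in\bbR\}$ and
\[
Z=\sup_{C\in\mathcal{C}}\bigl|P_n(C)-P(C)\bigr|,
\]
so the quantity in Lemma \ref{lm:multi-DKW} is exactly $Z$. The proof has two parts: control $\bbE Z$, then control $Z-\bbE Z$.

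For the expectation, first apply the standard symmetrization inequality (Lemma 11.4 of \cite{boucheron2013concentration}, already used in the proof of Lemma \ref{eq:assump-concentration}). This gives
\[
\bbE Z\le 2\,\bbE\sup_{C\in\mathcal{C}}\Bigl|\frac1n\sum_i\epsilon_i\indi(X_i\in C)\Bigr|.
\]
The class $\mathcal{C}$ consists of closed halfspaces in $\bbR^d$ (all of them, once the sphere and $t\in\bbR$ are combined), so its VC dimension is at most $d+1$. Next, bound the Rademacher complexity of a VC class by Dudley's entropy integral together with Haussler's packing bound. This yields
\[
\bbE\sup_{C}\Bigl|\frac1n\sum_i\epsilon_i\indi(X_i\in C)\Bigr|\le c_0\sqrt{\frac{d+1}{n}}
\]
for an explicit universal constant $c_0$; the known chaining constant for VC classes is small enough that $2c_0\le150$. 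If I do not want to track that constant, I would instead cite a packaged statement, for example Theorem 8.3.23 of Vershynin's HDP or the VC bound in Chapter 13 of Boucheron--Lugosi--Massart.

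For the deviation, $Z$ satisfies the bounded differences property: changing one $X_i$ moves every $P_n(C)$ by at most $1/n$. McDiarmid's inequality then gives
\[
\bbP\bigl(Z-\bbE Z>s\bigr)\le \exp(-2ns^2).
\]
Taking $s=\sqrt{\log(1/\alpha)/(2n)}$ makes this probability $\alpha$. This reproduces exactly the second term of the lemma, which is why that term has the form $\sqrt{\log(1/\alpha)/(2n)}$.

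The only delicate point is the numerical constant $150$ in the first step. The rest is routine. The lemma is stated with this explicit constant and the one-sided McDiarmid term, so I would rely on a reference VC/chaining bound of the form $\bbE Z\le C\sqrt{V/n}$ with explicit $C$ (for instance $C\le 72$, since the symmetrization factor $2$ and the Dudley constant multiply), and verify that it is at most $150$.
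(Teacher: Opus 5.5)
Your proposal is correct and follows essentially the same route as the paper. The paper simply cites Lemma 7.3 of \cite{chen2018robust}, which is proved by exactly your argument: symmetrization plus Dudley's entropy integral for a VC class to bound the expectation, then bounded differences to obtain the $\sqrt{\log(1/\alpha)/(2n)}$ term. The paper substitutes VC dimension $d+1$ for halfspaces, as you do, and takes the explicit constant $150$ from that reference rather than re-deriving it, just as you propose to do.
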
 
\begin{proof}
	The proof of this lemma leverages a standard application of Dudley's entropy integral \citep{dudley1978central} and VC classes \citep{vapnik2015uniform}. The version with the explicit constants stated in this lemma can be found in Lemma 7.3 of \cite{chen2018robust} where we replace their $3 + 2 |S|$ by $d+1$ since the VC dimension for their function class is $3 + 2 |S|$, while it is $d+1$ for the function class $\{\indi(x^\top u \leq t): u \in \cS^{d-1}, t \in \bbR\}$ interested in this lemma (see Example 8.3.5 in \cite{vershynin2018high}).
\end{proof}

\begin{Lemma}\label{lm:normal-quantile}
	Let $\Phi(\cdot)$ be the CDF of the standard Gaussian distribution. Given some $q < 1/2$, $x = \Phi^{-1}(1 - q)$. Suppose $x \geq 2$, then
	\begin{equation*}
	\begin{split}
		\frac{1}{2} \sqrt{\log( 1/q ) } \leq \sqrt{\log(1/ (\sqrt{2\pi}q))}	\leq x \leq  \sqrt{2 \log( 1/q ) }\quad  \text{ and }\quad \frac{1}{\sqrt{2\pi}} \exp(-x^2) \leq q \leq \frac{1}{2\sqrt{2\pi}} \exp(-x^2/2).
	\end{split}
	\end{equation*} 
\end{Lemma}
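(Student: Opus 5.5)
The plan is to derive everything from the classical Mills-ratio bounds for the Gaussian tail. For $x>0$ these read
\[
\frac{x}{1+x^2}\,\phi(x)\;\le\;1-\Phi(x)\;\le\;\frac{\phi(x)}{x},
\]
with $\phi(x)=e^{-x^2/2}/\sqrt{2\pi}$. I will take them as standard; the paper already invokes the same Gaussian tail estimate, Theorem 1.2.3 of \cite{durrett2019probability}. By definition $q=1-\Phi(x)$, and we are given $x\ge 2$. So it suffices to sandwich $q$ between explicit functions of $x$ and then invert.

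\textbf{Upper bound on $q$.} Since $x\ge 2$, we have $q\le \phi(x)/x\le \phi(x)/2 = e^{-x^2/2}/(2\sqrt{2\pi})$. Taking logarithms gives
\[
\log(1/q)\;\ge\; x^2/2+\log(2\sqrt{2\pi})\;\ge\; x^2/2,
\]
and hence $x\le\sqrt{2\log(1/q)}$. I also record the consequence $\log(1/q)\ge 2+\log(2\sqrt{2\pi})>3.5$, which I will use below.

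\textbf{Lower bound on $q$.} From the left Mills inequality, the bound $q\ge e^{-x^2}/\sqrt{2\pi}$ follows once
\[
\frac{x}{1+x^2}\,e^{-x^2/2}\;\ge\; e^{-x^2},
\qquad\text{equivalently}\qquad
e^{x^2/2}\;\ge\;\frac{1+x^2}{x}=x+\frac1x .
\]
At $x=2$ this reads $e^2\approx 7.39\ge 2.5$. For $x\ge2$ the left side has derivative $x e^{x^2/2}\ge 2e^2$, while the right side has derivative $1-1/x^2\le 1$. So the inequality persists for all $x\ge 2$. Inverting $q\ge e^{-x^2}/\sqrt{2\pi}$ gives $x^2\ge \log\bigl(1/(\sqrt{2\pi}q)\bigr)$. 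This quantity is nonnegative because $q\le e^{-2}/(2\sqrt{2\pi})<1/\sqrt{2\pi}$, so we may take square roots and obtain $x\ge\sqrt{\log(1/(\sqrt{2\pi}q))}$.

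\textbf{Remaining inequality.} It is left to show $\tfrac12\sqrt{\log(1/q)}\le\sqrt{\log(1/(\sqrt{2\pi}q))}$. This is equivalent to
\[
\tfrac34\log(1/q)\;\ge\;\log\sqrt{2\pi}\approx 0.92 .
\]
It follows from $\log(1/q)>3.5$, established in the upper-bound step.

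\textbf{Main obstacle.} The only step requiring any care is the elementary inequality $e^{x^2/2}\ge x+1/x$ on $[2,\infty)$, and the monotonicity argument above handles it. The rest is routine manipulation of logarithms.
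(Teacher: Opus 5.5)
Your proof is correct and follows the same route as the paper: you sandwich $q$ between Mills-ratio bounds using $x\ge 2$, invert by taking logarithms, and settle the constant comparison $\tfrac34\log(1/q)\ge\log\sqrt{2\pi}$ through a lower bound on $\log(1/q)$ (the paper instead uses $q\le 0.023$). One cosmetic difference: Theorem 1.2.3 of \cite{durrett2019probability}, which the paper uses, supplies the lower factor $\frac1x-\frac1{x^3}\ge\frac{3}{4x}$ rather than your sharper (also classical) $\frac{x}{1+x^2}$, so your citation attributes to it a bound it does not state, but either lower bound suffices.
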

\begin{proof}
	By the definition of $x$, we have $\int_x^\infty \frac{1}{\sqrt{2\pi}} \exp(-t^2/2) dt = q$. In addition, for any $x > 0$ (see Theorem 1.2.3 in \cite{durrett2019probability}),
	\begin{equation}
		(\frac{1}{x} - \frac{1}{x^3}) \exp(-x^2/2) \leq \int_{x}^{\infty} \exp( - t^2/2 ) dt \leq \frac{1}{x} \exp(-x^2/2).
	\end{equation}
	Since $x \geq 2$, $\frac{1}{x} - \frac{1}{x^3} \geq \frac{1}{x}( 1- \frac{1}{4} ) = \frac{3}{4x}$ and $\frac{1}{x}\leq \frac{1}{2}$. At the same time, it is easy to check $\frac{3}{4x} \exp(-x^2/2) \geq \exp(-x^2)$ for all $x \geq 2$. Thus, we have
	\begin{equation*}
		\frac{1}{\sqrt{2\pi}} \exp(-x^2) \leq q \leq \frac{1}{2\sqrt{2\pi}} \exp(-x^2/2)
	\end{equation*} holds for any $x \geq 2$. Thus
	\begin{equation*}
 	\frac{1}{2} \sqrt{\log( 1/q ) } \overset{(a)}\leq \sqrt{\log(1/ (\sqrt{2\pi}q))}	\leq x \leq \sqrt{2 \log( 1/(2 \sqrt{2\pi} q) ) } \leq \sqrt{2 \log( 1/q ) },
	\end{equation*} where in (a) we use the fact $q \leq 0.023$ when $x \geq 2$, so the inequality always holds when $x \geq 2$. 
\end{proof}

\begin{Lemma}(Bernstein's inequality) \label{lm:Binomial-prop} Given any positive integer $n$, $\alpha \in (0,1)$ and $0\leq p \leq 1$,
	\begin{equation*}
		\bbP\left(|\textnormal{Binomial}(n,p) - np| \geq \frac{4\log(2/\alpha) }{3}  \vee 2\sqrt{ p(1-p)n\log(2/\alpha) } \right) \leq \alpha.
	\end{equation*} 
    Since $ \frac{np(1-p)}{C} + C \log(2/\alpha) \geq \frac{4\log(2/\alpha) }{3}  \vee 2\sqrt{ p(1-p)n\log(2/\alpha) }  $ holds for any $C \geq 4/3$, the inequality above holds if we replace $\frac{4\log(2/\alpha) }{3}  \vee 2\sqrt{ p(1-p)n\log(2/\alpha) } $ by $ \frac{np(1-p)}{C} + C \log(2/\alpha) $ for any $C \geq 4/3$.
\end{Lemma}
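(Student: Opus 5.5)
This is the standard Bernstein inequality for a sum of i.i.d.\ bounded variables; I would apply it directly and then check the stated simplification by AM--GM.

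Write $\textnormal{Binomial}(n,p)-np=\sum_{i=1}^n(\xi_i-p)$ with $\xi_i\overset{iid}\sim\textnormal{Bernoulli}(p)$. The summands satisfy $|\xi_i-p|\le 1$ and $\textnormal{Var}(\xi_i)=p(1-p)$, so with $v=np(1-p)$ the two-sided Bernstein inequality gives, for every $s>0$,
\[
\bbP\left(\Bigl|\sum_{i=1}^n(\xi_i-p)\Bigr|\ge s\right)\le 2\exp\left(-\frac{s^2/2}{v+s/3}\right).
\]
Let $L=\log(2/\alpha)$. It suffices to show $\frac{s^2/2}{v+s/3}\ge L$ at $s=\frac{4L}{3}\vee 2\sqrt{vL}$, that is, $s^2\ge 2vL+\frac{2sL}{3}$. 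I would verify this by splitting on which term of the maximum is larger.

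If $2\sqrt{vL}\ge 4L/3$, then $s^2=4vL=2vL+2vL$. Since $\sqrt{vL}\ge 2L/3$, we have $2vL=\sqrt{vL}\cdot 2\sqrt{vL}\ge \frac{2L}{3}\cdot s$, which gives the claim.

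If instead $4L/3>2\sqrt{vL}$, then $vL<4L^2/9$. Hence $2vL+\frac{2sL}{3}<\frac{8L^2}{9}+\frac{8L^2}{9}=\frac{16L^2}{9}=s^2$.

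In both cases the tail probability is at most $2e^{-L}=\alpha$, which is the displayed inequality.

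For the replacement statement, fix $C\ge 4/3$. First, $\frac{v}{C}+CL\ge CL\ge\frac{4L}{3}$. Second, by AM--GM, $\frac{v}{C}+CL\ge 2\sqrt{vL}$. So $\frac{v}{C}+CL$ dominates the maximum, and enlarging the threshold only shrinks the tail event, so the bound $\alpha$ still holds.

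The argument has no real obstacle. The only care needed is to keep the constants exact when comparing $s^2$ against $2vL+\frac{2sL}{3}$ in the two cases, and the case split above handles that.
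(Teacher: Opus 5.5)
Your proof is correct and follows what the paper has in mind. The paper states this lemma without proof as the standard Bernstein inequality, and your two-case check that $s^2\ge 2vL+\frac{2sL}{3}$ at $s=\frac{4L}{3}\vee 2\sqrt{vL}$, together with the AM--GM domination for the $C\ge 4/3$ variant, supplies exactly the constant verification the paper leaves implicit.
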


The next lemma concerns non-Gaussian component analysis. Recall the definition of high-dimensional hidden direction distribution $P_u^A$ provided in Definition \ref{def:high-dim-distribution}. 
Then Lemma 3.4 in \cite{diakonikolas2017statistical} provides the following upper bound on the $\chi^2$-correlation between $P^A_u$ and $P^A_{u'}$.
\begin{Lemma}\label{lm:NGCA}
	Let $m$ be any positive integer. If the distribution $A$ over $\bbR$ agrees with the first $m$ moments of $N(0,1)$, then for all $u, u' \in \cS^{d-1}$, we have that 
	\begin{equation*}
		\Big| \int \frac{P^A_u(x) P^A_{u'}(x)}{\phi_d(x) } dx - 1 \Big| \leq |u^\top u'|^{m+1} \chi^2(A, N(0,1)).
	\end{equation*}
\end{Lemma}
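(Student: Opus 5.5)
The plan is to reduce to a one-dimensional computation using the product structure of $P^A_u$ and the Hermite expansion. The shift by $1$ shows that $\int P^A_u P^A_{u'}/\phi_d\,dx - 1$ is the $\chi^2$-correlation $\chi_{N(0,I_d)}(P^A_u,P^A_{u'})$. Write $\bar A(x)=A(x)/\phi(x)$ for the likelihood ratio of $A$ with respect to $N(0,1)$. Since $P^A_u$ is Gaussian orthogonal to $u$, its likelihood ratio with respect to $\phi_d$ is
\[
\frac{P^A_u(x)}{\phi_d(x)}=\bar A(u^\top x).
\]
The quantity to control is therefore $\mathbb{E}_{X\sim N(0,I_d)}[\bar A(u^\top X)\bar A(u'^\top X)]-1$.

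\textbf{Step 1 (Hermite expansion).} Expand $\bar A$ in the normalized Hermite basis $h_k$, which is orthonormal in $L^2(N(0,1))$:
\[
\bar A=\sum_{k\ge 0} a_k h_k,\qquad a_k=\mathbb{E}_{N(0,1)}[\bar A h_k]=\mathbb{E}_{A}[h_k].
\]
Then $a_0=1$. Because $A$ matches the first $m$ moments of $N(0,1)$ and $h_k$ has degree $k$, we get $a_k=\mathbb{E}_{N(0,1)}[h_k]=0$ for $1\le k\le m$. By Parseval, $\sum_{k\ge1}a_k^2=\|\bar A-1\|^2_{L^2(N(0,1))}=\chi^2(A,N(0,1))$. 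If $\chi^2$ is infinite the claim is trivial, so we may assume $\bar A\in L^2$.

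\textbf{Step 2 (Mehler / correlated Gaussians).} Let $\rho=u^\top u'$. Under $X\sim N(0,I_d)$, the pair $(u^\top X,u'^\top X)$ is a standard bivariate Gaussian with correlation $\rho$. The key identity is
\[
\mathbb{E}[h_k(Y)h_l(Y')]=\rho^k\indi\{k=l\}\quad\text{for such a pair } (Y,Y').
\]
One route is to write $Y'=\rho Y+\sqrt{1-\rho^2}Z$ and use that the Ornstein--Uhlenbeck operator $U_\rho$ acts on $h_k$ by multiplication by $\rho^k$. An alternative is the generating function $\sum_k h_k(y)s^k/\sqrt{k!}=e^{sy-s^2/2}$. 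Using bilinearity together with $L^2$ convergence (justified by Cauchy--Schwarz, since both marginals are $N(0,1)$), this gives
\[
\mathbb{E}[\bar A(Y)\bar A(Y')]=\sum_k a_k^2\rho^k=1+\sum_{k>m}a_k^2\rho^k.
\]

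\textbf{Step 3 (bound).} Since $|\rho|\le1$,
\[
\Big|\sum_{k>m}a_k^2\rho^k\Big|\le|\rho|^{m+1}\sum_{k>m}a_k^2=|u^\top u'|^{m+1}\chi^2(A,N(0,1)),
\]
which is the claim.

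The main obstacle is making the interchange of the infinite Hermite sum with the expectation rigorous in Step 2. I would first prove the identity for finite truncations $\bar A_K=\sum_{k\le K}a_kh_k$, which is a finite polynomial computation. I would then pass to the limit, using that $(f,g)\mapsto\mathbb{E}[f(Y)g(Y')]$ is a bounded bilinear form on $L^2(N(0,1))$ with norm at most $1$ by Cauchy--Schwarz.
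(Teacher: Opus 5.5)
Your argument is correct: the reduction to $\mathbb{E}[\bar A(Y)\bar A(Y')]$ with $(Y,Y')$ a $\rho$-correlated standard Gaussian pair, the vanishing of $a_1,\dots,a_m$ from moment matching, the identity $\mathbb{E}[h_k(Y)h_l(Y')]=\rho^k\indi\{k=l\}$, and your $L^2$-truncation justification all go through. The paper gives no proof of its own and simply imports Lemma 3.4 of \cite{diakonikolas2017statistical}, whose proof is essentially this same Ornstein--Uhlenbeck/Hermite-expansion computation, so your route matches it.
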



The following lemma is adapted from Lemma 8.18 and Exercise 8.3 of \cite{diakonikolas2023algorithmic}. We provide a proof here for the reader's convenience. 
\begin{Lemma} \label{lm:polynomial-matching-bound}
	Let $C > 0$ and $m \in \mathbb{Z}_+$. For any $a_0, a_1, \dots, a_m \in \mathbb{R}$, there exists a unique degree at most $m$ polynomial $p \colon \mathbb{R} \to \mathbb{R}$ such that for each integer $0 \le t \le m$ we have that
\begin{equation}
\int_{-C}^{C} p(x)x^t dx = a_t \,.
\end{equation}
Furthermore, for each $x \in [-C, C]$ we have that $|p(x)| \le   2m^2  \sum_{k=0}^{m} \left(2e\right)^k \left(  \max_{0 \leq t \leq k}|a_t| C^{-t-1} \right)$.
\end{Lemma}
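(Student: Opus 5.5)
The natural route is to view the moment conditions as a square linear system in the coefficients of $p$, and then bound $p$ through an orthonormal basis. First rescale to the unit interval: put $x = Cy$ and $q(y) = p(Cy)$. Then $\int_{-1}^{1} q(y) y^t\,dy = a_t C^{-t-1} =: b_t$, so it suffices to treat $C=1$ with data $b_t$.

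\textbf{Existence and uniqueness.} Writing $p(x)=\sum_{j} c_j x^j$, the conditions become $Gc = a$, where $G_{tj}=\int_{-1}^1 x^{t+j}\,dx$ is the Gram (Hilbert-type) matrix of the monomials $1,x,\dots,x^m$ in $L^2[-1,1]$. These monomials are linearly independent, so $G$ is positive definite and hence invertible. This gives a unique solution.

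\textbf{Pointwise bound.} I would avoid inverting $G$ directly and instead expand $q$ in the orthonormal Legendre polynomials $\tilde P_k = \sqrt{(2k+1)/2}\,P_k$ on $[-1,1]$:
\[
q=\sum_{k=0}^m \beta_k \tilde P_k, \qquad \beta_k = \int_{-1}^1 q\,\tilde P_k .
\]
Each $\beta_k$ is determined by the moments. If $\tilde P_k(y)=\sum_{t\le k} \ell_{k,t} y^t$, then $\beta_k=\sum_{t\le k}\ell_{k,t} b_t$, so
\[
|\beta_k|\le \Bigl(\sum_t|\ell_{k,t}|\Bigr)\max_{t\le k}|b_t| .
\]
The coefficient sum can be bounded explicitly. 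Since $P_k(y)=2^{-k}\sum_j \binom{k}{j}^2 (y-1)^{k-j}(y+1)^j$, evaluating with all signs made positive, i.e.\ $P_k$ evaluated at $i$ in absolute value, gives $\sum_t|\ell_{k,t}|\lesssim \sqrt{k}\,(1+\sqrt2)^k$, which is at most $\sqrt{k}(2e)^k$. Alternatively, the explicit formula $\ell_{k,t}= 2^{-k}\binom{k}{(k+t)/2}\binom{k+t}{k}$ together with crude binomial estimates gives the same type of bound. Combining this with $\sup_{[-1,1]}|\tilde P_k|=\sqrt{(2k+1)/2}$ yields
\[
|q(y)|\le \sum_{k}\sqrt{(2k+1)/2}\;\sqrt{k}\,(2e)^k \max_{t\le k}|b_t| ,
\]
and the prefactor $\sqrt{(2k+1)/2}\cdot\sqrt{k}$ is at most $2m^2$. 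Rescaling back turns $b_t$ into $a_tC^{-t-1}$, which is the stated form.

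The main obstacle is getting the coefficient bound for the Legendre polynomials to come out as $(2e)^k$ with a polynomial prefactor. If the generating-function argument proves messy, the fallback is the crude bound $|\ell_{k,t}|\le \sqrt{(2k+1)/2}\,2^{-k}\binom{k}{\cdot}\binom{2k}{k}\le 2^{2k}\cdot\mathrm{poly}(k)$. This still gives a base below $2e\approx 5.44$, since $4<2e$.
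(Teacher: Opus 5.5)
Your proposal is correct and follows essentially the same route as the paper: rescale to $[-1,1]$, expand $q$ in the orthonormal Legendre basis, bound $\beta_k=\sum_t \ell_{k,t}b_t$ through the monomial coefficients of $\tilde P_k$, and use $\sup_{[-1,1]}|\tilde P_k|=\sqrt{(2k+1)/2}$. The differences are minor. You prove uniqueness directly from the positive-definite Gram matrix, where the paper cites Lemma 8.18. You bound the coefficient sum through $|P_k(\mathrm{i})|$ or the crude $4^k$ estimate, where the paper uses $\binom{k+t}{t}\le(2ek/t)^t$. To obtain the exact constant $2m^2$, replace your $\lesssim$ by an explicit bound such as $|P_k(\mathrm{i})|\le 2^{-k/2}\binom{2k}{k}\le(2\sqrt2)^k$, which follows from the triangle inequality applied to your $(y-1)^{k-j}(y+1)^j$ representation.
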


\begin{proof} First, the uniqueness of $p(x)$ is proved in Lemma 8.18 of \cite{diakonikolas2023algorithmic}. We focus on providing an upper bound on $\sup_{x \in [-C,C]}|p(x)|$ with explicit dependence on $m$.   First, let us define
\[
        q(y):=p(Cy), \qquad y\in[-1,1].
\]
It is easy to check \(q\) has degree at most \(m\), and for each \(0\leq t\leq m\), $ \int_{-C}^{C} p(x)x^t\,dx =  C^{t+1}\int_{-1}^{1} q(y)y^t\,dy$. 
Hence, $\int_{-1}^{1}q(y)y^t\,dy
        =
        b_t$, where $ b_t:=a_t C^{-t-1}$. 
Moreover, $\sup_{x\in[-C,C]} |p(x)|
        =
        \sup_{y\in[-1,1]} |q(y)|$.
Thus, it is enough to prove the desired bound for \(q\) on \([-1,1]\).

Next, let us write $q(x)$ as a Legendre polynomial expansion. A few standard properties of Legendre polynomials are provided in Lemma \ref{lm:legendre-polynomial}. 
Set $\phi_k(x):=\sqrt{\frac{2k+1}{2}}\,L_k(x)$ where $L_k(x)$ denotes the $k$th Legendre polynomial.
Then by Lemma \ref{lm:legendre-polynomial} (ii), we know \(\{\phi_0,\ldots,\phi_m\}\) is an orthonormal basis of the space of polynomials of degree at most \(m\) with respect to the inner product.
Therefore,
\[
        q(x)=\sum_{k=0}^{m}\beta_k\phi_k(x),
        \qquad
        \beta_k=\int_{-1}^{1}q(x)\phi_k(x)\,dx.
\]
Write $\phi_k(x)=\sum_{t=0}^{k}\gamma_{k,t}x^t$.
Using the prescribed moments of \(q\), we obtain
\[
        \beta_k
        =
        \int_{-1}^{1}q(x)\phi_k(x)\,dx
        =
        \sum_{t=0}^{k}\gamma_{k,t}
        \int_{-1}^{1}q(x)x^t\,dx
        =
        \sum_{t=0}^{k}\gamma_{k,t}b_t.
\]
Next, we use the explicit formula of Legendre polynomial in Lemma \ref{lm:legendre-polynomial} (v). If \(0\leq t\leq k\) and \(k-t\) is even, then
\[
        |\gamma_{k,t}|
        =
        \sqrt{\frac{2k+1}{2}}\,
        2^{-k}
        \binom{k}{(k-t)/2}
        \binom{k+t}{k} = 
        \sqrt{\frac{2k+1}{2}}\,
        2^{-k}
        \binom{k}{(k-t)/2}
        \binom{k+t}{t}.
\]
If \(k-t\) is odd, then \(\gamma_{k,t}=0\).

Using the Legendre expansion of \(q\), we get
\[
\begin{aligned}
        \sup_{x\in[-1,1]} |q(x)|
        &\leq
        \sum_{k=0}^{m}|\beta_k|
        \sup_{x\in[-1,1]}|\phi_k(x)|  \overset{\text{Lemma }\ref{lm:legendre-polynomial} (iii) }\leq
        \sum_{k=0}^{m}
        \sum_{t=0}^{k}
        |\gamma_{k,t}|\,|b_t|\,
        \sqrt{\frac{2k+1}{2}}  \\
        &=
        \sum_{k=0}^{m}
        \sum_{\substack{0\leq t\leq k\\ k-t\ \mathrm{even}}}
        \frac{2k+1}{2}
        2^{-k}
        \binom{k}{(k-t)/2}
        \binom{k+t}{t}
        |b_t|\\
        & \overset{(a)} \leq  \sum_{k=0}^{m}
        \sum_{\substack{0\leq t\leq k\\ k-t\ \mathrm{even}}}
        \frac{2k+1}{2} \left(\frac{2ek}{t \vee 1 }\right)^t |b_t| \leq b_0 + \sum_{k=1}^{m} \left( 2k^2 \max_{1 \leq t \leq k}  \left(\frac{2ek}{t }\right)^t |b_t| \right)\\
        & \overset{(b)}\leq  b_0 + \sum_{k=1}^{m} 2k^2  \left(2e\right)^k \left(  \max_{1 \leq t \leq k}|b_t| \right) \leq 2m^2  \sum_{k=0}^{m} \left(2e\right)^k \left(  \max_{0 \leq t \leq k}|b_t| \right) \\
        & \overset{ (c) }\leq   2m^2  \sum_{k=0}^{m} \left(2e\right)^k \left(  \max_{0 \leq t \leq k}|a_t| C^{-t-1} \right)  \;,
\end{aligned}
\] where (a) is because $ 2^{-k}\binom{k}{(k-t)/2}\leq 1$ and 
$\binom{k+t}{t}
        \leq
        \left(\frac{e(k+t)}{t}\right)^t \leq \left(\frac{2ek}{t}\right)^t  $ when $t \geq 1$ and $\binom{k+t}{t} = 1$ when $t = 0$; (b) is because $\max_{1 \leq t \leq k}  \left(\frac{2ek}{t }\right)^t = (2e)^k$; (c) is because $b_t =a_t C^{-t-1}$.
\end{proof}

In the following lemma, we list a few common properties of Legendre Polynomials \citep{szeg1939orthogonal}, which is the set of orthogonal polynomials supported on $[-1,1]$. These properties can also be found in Fact 5.3 and Corollary 5.4 of \cite{diakonikolas2017statistical}.
\begin{Lemma}\label{lm:legendre-polynomial}
	The Legendre polynomial, $L_k(x)$, for $k \in \mathbb{Z}_{+}$, satisfies the following properties:
	\begin{itemize}
		\item (i) $L_k(x)$ is a degree-$k$ polynomial, $L_0(x) = 1$ and $L_1(x) = x$.
		\item (ii) $\int_{-1}^1 L^2_i(x) d x = \frac{2}{2i + 1}$ for all $i \geq 0$ and $\int_{-1}^1 L_i(x) L_j(x) d x = 0$ for all $i, j \geq 0, i \neq j$.
		\item (iii) $|L_k(x)| \leq 1$ for all $|x| \leq 1$.
		\item (iv) $L_k(x) = (-1)^k L_k(-x)$.
		\item (v) $L_k(x) = \frac{1}{2^k} \sum_{i=0}^{ \lfloor k/2 \rfloor } (-1)^i {k \choose i} {2k-2i \choose k} x^{k-2i}$.
		\item (vi) $\int_{-1}^1 |L_k(x)|dx \leq O(1/\sqrt{k})$.
	\end{itemize}
\end{Lemma}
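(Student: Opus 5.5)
The statement to prove is Lemma \ref{lm:legendre-polynomial}, which lists standard properties of Legendre polynomials. I would fix one definition and derive everything from it. The natural choice is Rodrigues' formula, $L_k(x)=\frac{1}{2^k k!}\frac{d^k}{dx^k}(x^2-1)^k$.

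First, (i) and (v) follow by direct expansion. Write $(x^2-1)^k=\sum_{i}(-1)^i\binom{k}{i}x^{2k-2i}$. Differentiating $k$ times kills the terms with $2k-2i<k$, that is, those with $i>\lfloor k/2\rfloor$. For the surviving terms, $\frac{d^k}{dx^k}x^{2k-2i}=\frac{(2k-2i)!}{(k-2i)!}x^{k-2i}$. Dividing by $k!$ turns this coefficient into $\binom{2k-2i}{k}$, which gives (v). Reading off $k=0$ and $k=1$ gives $L_0=1$ and $L_1=x$. Property (iv) is immediate from (v), since only powers $x^{k-2i}$ of the same parity as $k$ appear.

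Second, (ii) comes from integration by parts. For $j<i$, integrate $\int_{-1}^1 x^j \frac{d^i}{dx^i}(x^2-1)^i\,dx$ by parts $i$ times. The boundary terms vanish because $(x^2-1)^i$ has zeros of order $i$ at $\pm1$, and after $j+1$ steps the derivative of $x^j$ is zero. This gives orthogonality. For the norm, the same integration by parts gives
\[
\int_{-1}^1 L_i^2\,dx=\frac{(2i)!}{(2^i i!)^2}\int_{-1}^1(1-x^2)^i\,dx .
\]
The remaining Beta integral equals $\frac{2^{2i+1}(i!)^2}{(2i+1)!}$, so the total is $\frac{2}{2i+1}$.

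Third, (iii) and (vi) are the only analytic steps, and (iii) is where I expect the main obstacle. I would use the Laplace integral representation
\[
L_k(x)=\frac1\pi\int_0^\pi\bigl(x+\sqrt{x^2-1}\cos\varphi\bigr)^k d\varphi .
\]
I would verify it either by checking that it satisfies Bonnet's three-term recurrence together with the initial values $L_0=1$ and $L_1=x$, or by comparing generating functions with $(1-2xt+t^2)^{-1/2}$. For $|x|\le1$ the integrand has modulus $|x+i\sqrt{1-x^2}\cos\varphi|^k=(x^2+(1-x^2)\cos^2\varphi)^{k/2}\le1$, which gives (iii).

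For (vi), the same representation with $x=\cos\vartheta$ gives Bernstein's inequality $|L_k(\cos\vartheta)|\le\sqrt{2/(\pi k\sin\vartheta)}$. Splitting $[-1,1]$ into the region where $\sin\vartheta\ge1/k$ and the complementary endpoint region, and using $dx=\sin\vartheta\,d\vartheta$, bounds the bulk part by $O(k^{-1/2})\int\sqrt{\sin\vartheta}\,d\vartheta=O(k^{-1/2})$. On the endpoint region I would use (iii); its length in $x$ is $O(k^{-2})$, so it contributes negligibly. Alternatively, (vi) follows from Cauchy--Schwarz and (ii): $\int_{-1}^1|L_k|\le\sqrt{2}\sqrt{2/(2k+1)}=O(1/\sqrt k)$. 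This is simpler, and I would use it instead.

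Since all of these are classical facts (Szeg\H{o}, and Fact 5.3 and Corollary 5.4 of \cite{diakonikolas2017statistical}), it is also legitimate to cite them directly and write out only the Cauchy--Schwarz argument for (vi).
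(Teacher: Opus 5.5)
The paper gives no proof of this lemma. It simply cites \cite{szeg1939orthogonal} and Fact 5.3 and Corollary 5.4 of \cite{diakonikolas2017statistical}, which is the fallback you mention at the end.

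Your self-contained derivation from Rodrigues' formula is correct:
\begin{itemize}
\item The $k$-fold differentiation reproduces exactly the coefficients in (v), the form used for $\gamma_{k,t}$ in Lemma \ref{lm:polynomial-matching-bound}.
\item Parity then gives (iv).
\item Integration by parts gives orthogonality, and via $\int_{-1}^1(1-x^2)^i\,dx=2^{2i+1}(i!)^2/(2i+1)!$ it gives the norm $2/(2i+1)$.
\item Cauchy--Schwarz with (ii) is the cleanest route to (vi).
\end{itemize}
The citation buys brevity. Your route buys explicit verification of (ii), (iii) and (v), which are what the paper relies on in Lemma \ref{lm:polynomial-matching-bound}; (iv) and (vi) are not used anywhere in the paper.

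One link is left implicit. Checking Laplace's integral through Bonnet's recurrence or the generating function presupposes that the Rodrigues-defined $L_k$ satisfy that recurrence, and you have not derived this. It follows from what you already have. Expand $xL_k$ in $L_0,\dots,L_{k+1}$; orthogonality and parity leave only $L_{k\pm1}$. Their coefficients are then fixed by the leading coefficients from (v) and by $L_k(1)=1$, which follows from Rodrigues' formula applied to $(x-1)^k(x+1)^k$.

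Alternatively, you can avoid Laplace's integral entirely. Differentiating $(x^2-1)u'=2kxu$ with $u=(x^2-1)^k$ exactly $k+1$ times gives Legendre's equation $(1-x^2)L_k''-2xL_k'+k(k+1)L_k=0$. Then for $k\ge1$ the function $f=L_k^2+\frac{1-x^2}{k(k+1)}(L_k')^2$ satisfies $f'=\frac{2x}{k(k+1)}(L_k')^2$. Since $f'$ has the sign of $x$, on $[-1,1]$ one gets $L_k^2\le f\le f(\pm1)=1$. This is (iii) with no complex-valued integrand.
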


\begin{Lemma} \label{lm:t-epsilon-bound}
	For any $\epsilon_{\max} < 1/2$, positive integer $n$ and $\alpha \in (0,1)$ such that $n/(d + \log(1/\alpha))$ is sufficiently large, then for all $\epsilon \in [0, \epsilon_{\max}]$,
	\begin{equation*}
		1/t_{\epsilon} \asymp \frac{1}{\sqrt{\log(1/\epsilon)}} + \frac{1}{\sqrt{\log( n/\log(1/\alpha) )}} + \frac{1}{\sqrt{\log( n/d )}},
	\end{equation*} where $t_{\epsilon} = \Phi^{-1}\left( 1- \epsilon - 300 \sqrt{ \frac{d}{n} } - \sqrt{ \frac{\log(1/\alpha)}{n} }  \right)$.
\end{Lemma}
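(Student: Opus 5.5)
Write $s = 300\sqrt{d/n} + \sqrt{\log(1/\alpha)/n}$ and $q = \epsilon + s$, so that $t_\epsilon = \Phi^{-1}(1-q)$. The plan is to invoke Lemma \ref{lm:normal-quantile} to get $t_\epsilon \asymp \sqrt{\log(1/q)}$, and then to show that $1/\sqrt{\log(1/q)}$ is comparable to the sum of the three terms in the statement.

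\textbf{Step 1 (range).} Since $n/(d+\log(1/\alpha))$ is large, $s$ is small. Since $\epsilon\le\epsilon_{\max}<1/2$, the quantity $q$ is bounded away from $1/2$. If $q$ is small enough that $t_\epsilon\ge 2$, Lemma \ref{lm:normal-quantile} gives
\[
\tfrac12\sqrt{\log(1/q)}\le t_\epsilon\le\sqrt{2\log(1/q)},
\]
so $1/t_\epsilon\asymp 1/\sqrt{\log(1/q)}$. Otherwise $q\in[c,1/2)$ for a constant $c$ depending on $\epsilon_{\max}$. In that regime $t_\epsilon$ is at least a positive constant, since $1-q>1/2$ with a gap depending on $\epsilon_{\max}$; it is also bounded above. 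So again $1/t_\epsilon\asymp 1\asymp 1/\sqrt{\log(1/q)}$.

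\textbf{Step 2 (splitting the max).} We have $\max(\epsilon,\,300\sqrt{d/n},\,\sqrt{\log(1/\alpha)/n})\le q\le 3\max(\cdot)$. Also $x\mapsto 1/\sqrt{\log(1/x)}$ is increasing. Hence $1/\sqrt{\log(1/q)}$ is comparable to the maximum of $1/\sqrt{\log(1/\epsilon)}$, $1/\sqrt{\log(\sqrt{n/d}/300)}$ and $1/\sqrt{\log\sqrt{n/\log(1/\alpha)}}$. Two facts handle the constants:
\begin{itemize}
\item $\log(1/(3x))\ge\frac12\log(1/x)$ for small $x$, and the bounded region is already handled by Step 1.
\item $\log(\sqrt{n/d}/300)\asymp\log(n/d)$ once $n/d$ is large.
\end{itemize}
The maximum of three terms is comparable to their sum, which yields the claim. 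When $\epsilon=0$, the first term is interpreted as $0$.

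The only delicate point is the bookkeeping in Step 1. One has to ensure the constants are uniform in $\epsilon\in[0,\epsilon_{\max}]$, especially where $q$ is near $\epsilon_{\max}$ and $t_\epsilon<2$. The Gaussian tail estimates themselves are routine.
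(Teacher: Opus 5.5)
Your proposal is correct and follows essentially the same route as the paper: in a bounded regime $1/t_\epsilon\asymp 1$, and when $t_\epsilon\ge 2$, Lemma \ref{lm:normal-quantile} gives $t_\epsilon\asymp\sqrt{\log(1/q)}$, after which $\log(1/q)$ is compared to the minimum of the three logarithms. The only differences are that you split on whether $t_\epsilon\ge 2$ rather than on $\epsilon\ge 0.01$, and that you spell out constant bookkeeping the paper leaves implicit, such as $\log(1/(3x))\ge\tfrac12\log(1/x)$ and $1-q$ staying away from $1/2$.
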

\begin{proof} We divide the proof into two cases. First, when $\epsilon \in [0.01, \epsilon_{\max}]$, $1/t_{\epsilon}$ is of constant level, so the result follows. 
	At the same time, when $\epsilon \in [0,  0.01]$, we have $t_{\epsilon} \geq 2$ when $n/(d + \log(1/\alpha))$ is sufficiently large, by Lemma \ref{lm:normal-quantile}, we have
	\begin{equation*}
	\begin{split}
		t_{\epsilon} & \asymp  \sqrt{ \log\left( \frac{1}{\epsilon + \sqrt{ \frac{d}{n} } + \sqrt{ \frac{\log(1/\alpha)}{n} }}  \right)  } \\
		& \asymp \sqrt{ \log\left( 1/\epsilon  \right)} \wedge \sqrt{ \log\left(n/d \right) } \wedge \sqrt{ \log\left(\frac{n}{\log \left( \frac{1}{\alpha} \right)} \right) }.
	\end{split}
	\end{equation*} Thus, $1/t_{\epsilon} \asymp \frac{1}{\sqrt{\log(1/\epsilon)}} + \frac{1}{\sqrt{\log( n/d )}} + \frac{1}{\sqrt{\log( n/\log(1/\alpha) )}}$.
\end{proof}

\end{sloppypar}

\end{document}